\newtheoremstyle{theoremstyle}
  {10pt}      
  {5pt}       
  {\itshape}  
  {}          
  {\bfseries} 
  {:}         
  {.5em}      
  {}          
\newtheoremstyle{examplestyle}
  {10pt}      
  {5pt}       
  {}          
  {}          
  {\bfseries} 
  {:}         
  {.5em}      
  {}          
\theoremstyle{theoremstyle}
\newtheorem{theorem}{Theorem}[section]
\newtheorem*{theorem*}{Theorem}
\newtheorem{lemma}[theorem]{Lemma}
\newtheorem{proposition}[theorem]{Proposition}
\newtheorem*{proposition*}{Proposition}
\newtheorem{corollary}[theorem]{Corollary}
\newtheorem*{corollary*}{Corollary}
\theoremstyle{definition}
\newtheorem{example}[theorem]{Example}
\newtheorem{definition}[theorem]{Definition}
\newtheorem{definition*}{Definition}
\newtheorem{remark}[theorem]{Remark}
\newtheorem{remark*}{Remark}
\newcommand{\too}{{\longrightarrow}}
\newcommand{\Z}{{\mathbb{Z}}}
\newcommand{\R}{{\mathbb{R}}}
\newcommand{\N}{{\mathbb{N}}}
\newcommand{\cat}{\mathscr}
\newcommand{\inj}{\hookrightarrow}
\newcommand{\surj}{\twoheadrightarrow}
\newcommand{\iso}{\stackrel{\cong}{\longrightarrow}}
\newcommand{\cof}{\rightarrowtail}
\newcommand{\g}{{C_2}}
\DeclareMathOperator*{\colim}{colim}
\DeclareMathOperator*{\hocolim}{hocolim}
\begin{document}
\author{Kristian Jonsson Moi\footnote{Supported by the Danish National Research Foundation through the Centre for Symmetry and Deformation (DNRF92) and partially supported by ERC adv. grant No.228082.}}
\title{Equivariant loops on classifying spaces}
\maketitle
\begin{abstract}We compute the homology of the space of equivariant loops on the classifying space of a simplicial monoid $M$ with anti-involution, provided $\pi_0 (M)$ is central in the homology ring of $M$. The proof is similar to McDuff and Segal's proof of the group completion theorem.

Then we compute the homology of the $C_2$-fixed points of a Segal-type model of the algebraic $K$-theory of an additive category with duality. As an application we show that this fixed point space is sometimes group complete, but not in general.
  
\end{abstract}
\tableofcontents
\newpage
\section{Introduction} It is well known that for a group-like simplicial monoid $M$ the natural map
\[\eta_M \colon |M| \to \Omega |BM| \]
is a weak homotopy equivalence. In the non-group-like case the classical group completion theorem \cite{mcduff-segal}, \cite[Q.4]{filtr} states that for a simplicial monoid $M$ satisfying certain conditions $\eta_M$ induces an isomorphism of $H_\ast(M)$-modules
\[H_\ast(M)[\pi_0(M)^{-1}] \iso H_\ast(\Omega |BM|).\]
The first task of this paper is to investigate the corresponding situation when $M$ has an anti-involution, i.e. a map $\alpha \colon M \to M$ such that $\alpha(m n) =  \alpha( n)\alpha(m )$. This extra structure allows us to define maps $w_i \colon B_iM \to B_iM$ given by 
\[w_p(m_1,m_2, \ldots , m_{p}) = (\overline{m_p},   \ldots,\overline{m_2}, \overline{m_{1}}) \]
which satisfy $w_n \circ w_n = id$ and compatibility relations with the simplicial structure maps (see \ref{rel:realsimp}). As a consequence the realisation $|BM|$ is a $\g$-space in a natural way.

We let $\R^{1,1}$ denote the minus-representation of $\g$ on $\R$ and write $S^{1,1}$ for its one-point compactification. Another model for this $\g$-space is given by taking the unit interval $[0,1]$ with the action $t \mapsto 1-t$ and collapsing the boundary. For a $\g$-space $X$ we write $\Omega^{1,1}X$ for the space $Map_\ast(S^{1,1},X)$ with the conjugation action of $\g$. In his thesis \cite{nisan} Nisan Stiennon has shown that $\eta_M$ is in fact an equivariant map
\[|M| \to \Omega^{1,1}|BM|\]
and that if $M$ is group-like then the induced map on fixed points
\[|M|^\g \to (\Omega^{1,1}|BM|)^\g\]
is a weak equivalence. In view of the group completion theorem it is then natural to ask what happens when $M$ is not group-like. The answer is as follows (see \ref{theorem:mainmon}):
\begin{theorem*}
  Let $M$ be a simplicial monoid with anti-involution such that $\pi_0 M$ is in the center of $H_\ast(M)$. Then the map
\[\eta^\g_M \colon |M|^\g \to (\Omega^{1,1}|B^{1,1}M|)^\g\]
induces an isomorphism 
\[\pi_0(M^\g)[\pi_0(M)^{-1}] \iso \pi_0(\Omega^{1,1}|B^{1,1}M|)^\g\]
of $\pi_0(M)$-sets and an isomorphism of $H_\ast(M)$-modules 
\[H_\ast(M^\g)[\pi_0(M)^{-1}] \iso H_\ast((\Omega^{1,1}|B^{1,1}M|)^\g).\]
\end{theorem*}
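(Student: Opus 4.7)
The strategy is to mimic McDuff and Segal's proof of the group completion theorem equivariantly. I would construct a simplicial $\g$-space $E_\bullet$ equipped with a $\g$-equivariant map $p \colon E_\bullet \to \barflip_\bullet M$ modelling a path-loop fibration over $|\barflip M|$, where $|E_\bullet|$ is equivariantly contractible. A natural candidate is a two-sided bar construction $E_p = M^{p+1}$ projecting onto $\barflip_\bullet M$ by forgetting a coordinate, equipped with an order-reversing involution built from the anti-involution $\alpha$ on $M$. The point of the construction is that the realisation $|E_\bullet|$ (and its $\g$-fixed points) should be contractible via an extra equivariant degeneracy or an explicit simplicial contracting homotopy respecting the involution.

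The key step is to show that after geometric realisation the map $p$ is a quasi-fibration, both on underlying spaces and after taking $\g$-fixed points, once one replaces the fibre over the basepoint by the telescope along the action of a cofinal sequence in $\pi_0(M)$. The classical McDuff--Segal argument proceeds by induction on the skeletal filtration of $|\barflip M|$ and reduces to the statement that for each $[m] \in \pi_0(M)$ right-multiplication by $m$ induces a homology isomorphism on $M$ after inverting $\pi_0(M)$; this is where the centrality hypothesis enters. For the $\g$-fixed-point analogue one needs the corresponding statement for $M^\g$, where the stabilisation involves both $m$ and $\alpha(m)$ because the real-simplicial face maps at the two ends of a simplex are exchanged by the involution; this follows by applying the same cofinality spectral-sequence argument to $M^\g$, using the fact that the anti-involution carries right to left multiplication so centrality of $\pi_0(M)$ in $H_\ast(M)$ forces the analogous centrality on the $H_\ast(M)$-module $H_\ast(M^\g)$.

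Once the equivariant quasi-fibration is in place, the fibre over the basepoint is a telescope whose $\g$-fixed-point homology computes $H_\ast(M^\g)[\pi_0(M)^{-1}]$ as an $H_\ast(M)$-module, while the equivariant contractibility of $|E_\bullet|$ identifies this fibre with $(\sglo|\barflip M|)^\g$; the assertion on $\pi_0$ then falls out of the long exact sequence of the fixed-point quasi-fibration. \emph{The main obstacle} will be proving the quasi-fibration property after passing to $\g$-fixed points. The McDuff--Segal inductive argument rests on cofinality statements for diagrams of monoid multiplications, and one has to check that these cofinality arguments survive the fixed-point functor. This demands a careful analysis of the interplay between the order-reversing involution on $\barflip_\bullet M$ and the skeletal filtration, and is the place where the bulk of the technical work will lie.
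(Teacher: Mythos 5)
Your proposal captures the broad McDuff--Segal philosophy, but it misidentifies the space that must be computed and runs into a structural obstruction that makes the plan, as stated, unworkable.

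\textbf{First, $(\Omega^{1,1}|B^{1,1}M|)^\g$ is not the fibre of an equivariant path-loop fibration over $|B^{1,1}M|^\g$.} You propose building a $\g$-equivariant $p \colon E_\bullet \to B^{1,1}_\bullet M$ with equivariantly contractible total space, and then to read off the answer from the fibre of $p^\g$. But if $E^\g$ were contractible and $p^\g$ were a quasi-fibration over $|B^{1,1}M|^\g$, its fibre over the basepoint would be $\Omega(|B^{1,1}M|^\g)$, the ordinary loop space of the fixed points. The target $(\Omega^{1,1}X)^\g$ is a different space: a $\g$-fixed equivariant loop $\gamma\colon S^{1,1}\to X$ is determined by its restriction to $[0,\tfrac12]$, which is a path from the basepoint to a point of $X^\g$; so $(\Omega^{1,1}X)^\g$ is homeomorphic to $\mathrm{hofib}(X^\g \hookrightarrow X)$, the homotopy fibre of the \emph{inclusion} of fixed points into the full space. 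The paper isolates exactly this observation as a homeomorphism $b_X \colon hF_{\iota_X}\to(\Omega^{1,1}X)^\g$ and it is the linchpin of the argument. Without it, there is nothing in your plan that connects the fibre you compute to the equivariant loop space you are after.

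\textbf{Second, the stabilisation does not carry a $\g$-action.} The telescope $M_\infty=\hocolim(M\xrightarrow{t\cdot}M\xrightarrow{t\cdot}\cdots)$ uses left multiplication by $t$, and this does not commute with the anti-involution: $\alpha(tm)=\overline{m}\,\overline{t}$, which is not $t\,\overline{m}$. So there is no order-reversing involution on $B(M_\infty,M,\ast)$ and no meaningful $(M_\infty)^\g$, and ``replace the fibre by the telescope and take fixed points'' does not literally make sense. The paper sidesteps this by never taking fixed points of the stabilised total space. Instead it performs edgewise subdivision in the bar direction, identifies $(Sd_h B^{1,1}M)^\g$ with the one-sided bar construction $B(\ast,M,M^\g)$ on the twisted $M$-module $M^\g$ (action $m\cdot n = mn\overline{m}$), and then \emph{pulls back} the non-equivariant homology fibration $Sd_hB(M_\infty,M,\ast)\to Sd_hBM$ along the inclusion $B(\ast,M,M^\g)\hookrightarrow Sd_hBM$. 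The pullback is $B(M_\infty,M,M^\g)$, which has a contracting retraction onto $M^\g_\infty$ (stabilised via the twisted action), and this gives the asserted localisation $H_\ast(M^\g)[\pi_0(M)^{-1}]$ directly. Note also that your appeal to ``centrality on $H_\ast(M^\g)$'' is not quite the relevant point since $H_\ast(M^\g)$ is a module, not a ring; what is used is that the centrality of $\pi_0(M)$ in $H_\ast(M)$ makes the module localisation well-defined, and that a cofinal generator $[t]$ computes it as a filtered colimit.

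Finally, the paper uses a homology-fibration calculus for bisimplicial sets rather than a McDuff--Segal skeletal induction with quasi-fibrations. This is partly a matter of technology, but in the equivariant setting it is more than cosmetic: the identification of the fixed points as a pullback (not as a fixed-point subspace of a $\g$-fibration), together with the homeomorphism $b_X$, is what makes the argument go through, and those two ideas are absent from your plan.
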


In \cite[§4]{catcoh} Segal proved a variant of the group completion theorem for $\Gamma$-spaces. Shimakawa  \cite{shimakawa} later considerer the $G$-equivariant situation for $G$ a finite group. He gave a way to deloop $\Gamma_G$-spaces and proved a group completion statement for these deloopings provided one is delooping with respect to a representation sphere $S^W$ such that $W^G \neq 0$. In this paper we consider the case $G = \g$ and $W = \R^{1,1}$. Since $(\R^{1,1})^\g = 0$, Shimakawa's result does not apply. We describe a construction of a Segal-type delooping of an additive category with duality with respect to $S^{1,1}$ and prove theorem \ref{theorem:maincat} which is analogous to \ref{theorem:mainmon}. As an application we make $\pi_0$-computations for symmetric (\ref{prop:symm(Z)}) and symplectic (\ref{prop:symp(Z)}) form spaces over $\Z$. 

The author would like to thank S{\o}ren Galatius and Lars Hesselholt for suggesting the problem for monoids and additive categories with duality, respectively. He would also like to thank Emanuele Dotto for important input, and Ib Madsen for guidance along the way. 

\section{Homology fibrations}
In this section we collect some basic facts about homology fibrations of simplicial, and bisimplicial sets. We make no claim to originality; the results here can either be found in \cite{grcomprev},\cite[IV,5]{gj} or \cite{jardine} or are easy consequences of the results there. 

A map of spaces or simplicial sets inducing an isomorphism on integral homology will be called a \emph{homology equivalence}.
\begin{definition}
  A commuting square
\[\xymatrix{ A \ar[r] \ar[d] & B \ar[d]^{f} \\
C\ar[r]_g & D }\]
 of simplicial sets is called homology cartesian if when we factor $f \colon B \to D$ as a trivial cofibration followed by a fibration 
\[B \stackrel{\simeq}{\cof} W \surj D\]
the induced map from $A$ to the pullback $C \times_D W$ is a homology equivalence.
\end{definition}
Note that a homotopy cartesian square is automatically homology cartesian. Just as for homotopy cartesian squares it doesn't matter which factorization we use or whether we choose to factor $f$ or $g$. By analogy with the case of homotopy cartesian squares \cite[II,8.22]{gj} we have:
\begin{lemma}\label{lemma:I+II}
  Let 
\[\xymatrix{A \ar[r] \ar[d] \ar@{} [dr] |{{\bf I}}& A' \ar[r] \ar[d]\ar@{} [dr] |{{\bf II}} & A'' \ar[d]\\
B \ar[r] & B' \ar[r]  & B''}\]
be a diagram of simplicial sets such that the square {\bf II} is homotopy cartesian. Then {\bf I} is homology cartesian if and only if the outer rectangle ${\bf I} + {\bf II}$ is homology cartesian.
\end{lemma}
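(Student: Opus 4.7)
The plan is to reduce both conditions on square \textbf{I} to the same comparison of pullbacks. First I would factor the composite $A'' \to B''$ as a trivial cofibration followed by a fibration, $A'' \stackrel{\simeq}{\cof} W'' \surj B''$. Pulling back along $B' \to B''$ produces a fibration $W' := B' \times_{B''} W'' \surj B'$, and because square \textbf{II} is homotopy cartesian, the resulting comparison map $A' \to W'$ is a weak equivalence.

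Next, separately factor $A' \to B'$ as $A' \stackrel{\simeq}{\cof} V \surj B'$. Since $A' \to V$ is a trivial cofibration and $W' \to B'$ is a fibration, the lifting axiom in the standard model structure on simplicial sets provides a map $V \to W'$ over $B'$, which by two-out-of-three is a weak equivalence between fibrations over $B'$. Because simplicial sets form a right proper model category, pulling this weak equivalence back along $B \to B'$ yields a weak equivalence
\[ B \times_{B'} V \stackrel{\simeq}{\longrightarrow} B \times_{B'} W' = B \times_{B''} W'', \]
and in particular a homology equivalence.

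By definition, square \textbf{I} is homology cartesian precisely when $A \to B \times_{B'} V$ is a homology equivalence, while the outer rectangle $\textbf{I}+\textbf{II}$ is homology cartesian precisely when $A \to B \times_{B''} W''$ is a homology equivalence. These two comparison maps sit in a commuting triangle whose third edge is the homology equivalence just produced, so two-out-of-three for homology equivalences gives the stated biconditional.

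The only genuinely model-categorical input is right properness of simplicial sets, which is what allows pulling back a weak equivalence between fibrations along an arbitrary map; the rest of the argument directly parallels the classical pasting lemma for homotopy cartesian squares in \cite[II,8.22]{gj}, and I would expect the main work in writing this up carefully to lie in verifying that the various factorizations and lifts assemble into a single commuting diagram over $B''$ so that two-out-of-three can actually be applied.
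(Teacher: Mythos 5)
Your argument is correct and carries out precisely the exercise the paper alludes to: the paper's one-line remark is that the proof is an application of the fact that pulling back along fibrations of simplicial sets preserves weak equivalences (i.e.\ right properness), and you use exactly that (together with a lift between two factorizations of $A' \to B'$ and two-out-of-three for homology isomorphisms) to identify the comparison map for square~{\bf I} with the comparison map for the outer rectangle up to a weak equivalence of targets.
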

The proof is an exercise in the fact that pulling back along fibrations of simplicial sets preserves weak equivalences.

Let $Y$ be a simplicial set. An $m$-simplex $\sigma \in Y_m$ of $Y$ corresponds to a unique map $\Delta^m \to Y$ which we will also call $\sigma$. The simplices of $Y$ form a category $Simp(Y)$ where an object is a map $\sigma \colon \Delta^m \to Y$, and where a morphism 
\[(\sigma \colon \Delta^m \to Y) \to (\tau \colon \Delta^n \to Y)\]
is a map $\alpha \colon [m] \to [n]$ in $\Delta$ such that the diagram
\[\xymatrix{\Delta^{m} \ar[rd]_\sigma \ar[rr]^{\alpha_\ast} & & \Delta^n \ar[dl]^\tau \\
& Y & }\]
commutes. Composition is given by composition of maps in $\Delta$. 

Let $f \colon X \to Y$ be a map of simplicial sets. Then, for any simplex $\sigma \colon \Delta^m \to Y$ we define $f^{-1}(\sigma)$ to the be the pullback in the square   
\[\xymatrix{f^{-1}(\sigma) \ar[r] \ar[d] & X \ar[d]^f \\
\Delta^m \ar[r]_\sigma \ar[r] & Y.}\]
For a diagram 
\[\xymatrix{\Delta^{m} \ar[rd]_\sigma \ar[rr]^{\alpha_\ast} & & \Delta^n \ar[dl]^\tau \\
& Y & }\]
there is an induced map 
\[f^{-1}(\alpha_\ast) \colon f^{-1}(\sigma) \to f^{-1}(\tau).\]
The assignments 
\[\sigma \mapsto f^{-1}(\sigma)\] 
and 
\[\left(\alpha_\ast \colon \left(\sigma \colon \to \Delta^m\right) \to \left(\tau \colon \to \Delta^n\right) \right) \mapsto \left(f^{-1}\left(\alpha_\ast\right) \colon f^{-1}\left(\sigma\right) \to f^{-1}\left(\tau\right)\right)\]
form the object and morphism components, respectively, of a functor 
\[f^{-1} \colon Simp(Y) \to sSet.\]
If $g \colon Z \to Y$ is another map to $Y$, then a map $h \colon X \to Z$ of objects over $Y$ induces a natural transformation
\[h_\ast \colon f^{-1} \to g^{-1}.\]
It is worth noting that $\colim f^{-1} \cong X$ as objects over $Y$, in particular $\colim id_Y^{-1} \cong Y$. The homotopy colimit $\hocolim f^{-1}$ is the diagonal of the bisimplicial set $\coprod_\ast f^{-1}$ (see \cite[IV,1.8]{gj}) which is given in degree $n$ by
\[(\coprod {}_\ast f^{-1})_n = \coprod_{\sigma \in N_n Simp(Y)} f^{-1}(\sigma(0)).\]
For each $n$ there is a map of simplicial sets
\[\coprod_{\sigma \in N_n Simp(Y)} f^{-1}(\sigma(0)) \to X.\]
Considering $X$ as a bisimplicial set which is constant in the nerve direction these maps assemble to a map of bisimplicial sets 
\[\gamma_f \colon  \coprod {}_\ast f^{-1} \to X.\]
\begin{lemma}
  The diagonal 
\[d\gamma_f \colon \hocolim f^{-1} \to X\]
is a weak equivalence.
\end{lemma}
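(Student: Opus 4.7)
The plan is to apply the bisimplicial realisation lemma by showing that $\gamma_f$ restricts to a weak equivalence in each fixed vertical (internal) simplicial degree. For each $k$, I consider the simplicial set $A_{\bullet,k} := (\coprod_\ast f^{-1})_{\bullet,k} = \coprod_{\sigma \in N_\bullet Simp(Y)}(f^{-1}(\sigma(0)))_k$ together with its map to the constant simplicial set $X_k$ induced by $\gamma_f$.

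Since $X_k$ is discrete, it suffices to show that the preimage over every vertex $y \in X_k$ is a contractible simplicial set. An $n$-simplex of this preimage is a pair consisting of a string $\sigma_0 \to \sigma_1 \to \cdots \to \sigma_n$ in $N_n Simp(Y)$ together with a map $\alpha \colon \Delta^k \to \Delta^{m_0}$ (where $\sigma_0 \colon \Delta^{m_0} \to Y$) satisfying $\sigma_0 \circ \alpha = f(y)$. The face and degeneracy maps behave exactly as the nerve of a category $C_y$ whose objects are pairs $(\sigma \colon \Delta^m \to Y, \alpha \colon \Delta^k \to \Delta^m)$ with $\sigma \circ \alpha = f(y)$, and whose morphisms $(\sigma,\alpha) \to (\tau, \alpha')$ are morphisms $\beta \colon \sigma \to \tau$ of $Simp(Y)$ (i.e., $\tau \circ \beta_\ast = \sigma$) satisfying the coherence $\beta_\ast \circ \alpha = \alpha'$.

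The category $C_y$ has an initial object, namely $(f(y) \colon \Delta^k \to Y, id_{\Delta^k})$: for any object $(\sigma, \alpha)$, the operator $\alpha$ itself is the unique morphism in $Simp(Y)$ from $f(y)$ to $\sigma$ that is compatible with the identity. Consequently $N_\bullet C_y$ is contractible, so the preimage of every vertex of $X_k$ is contractible and $A_{\bullet,k} \to X_k$ is a weak equivalence.

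Since this holds for every $k$, the bisimplicial realisation lemma \cite[IV, 1.9]{gj} implies that the diagonal $d\gamma_f$ is a weak equivalence. The only nontrivial step is the identification of the vertex-preimage as the nerve of $C_y$ and the recognition of the initial object; everything else is bookkeeping with the simplex category.
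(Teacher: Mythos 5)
Your argument is correct, and it is essentially the standard proof — the same one that appears in Goerss–Jardine [IV, 5.1], which the paper cites in lieu of giving its own proof. Fixing the ``internal'' degree $k$, the simplicial set $n \mapsto \coprod_{\sigma \in N_n Simp(Y)} (f^{-1}(\sigma(0)))_k$ is the nerve of the category of elements of the $Set$-valued functor $\sigma \mapsto (f^{-1}\sigma)_k$, and you correctly identify the fiber over a vertex $y \in X_k$ with the nerve of the comma-type category $C_y$ having initial object $(f(y)\colon\Delta^k \to Y,\, id_{\Delta^k})$; the degreewise weak equivalence then passes to the diagonal by the bisimplicial realization lemma.
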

For a proof see \cite[IV,5.1]{gj}.
\begin{lemma}\label{lemma:hfibprop}
  Let $f \colon X \to Y$ be a map of simplicial sets. The following are equivalent:
  \begin{enumerate}
  \item For every simplex $\sigma \colon \Delta^m \to Y$ the pullback diagram
\[\xymatrix{f^{-1}(\sigma) \ar[r] \ar[d] & X \ar[d]^f \\
\Delta^m \ar[r]_\sigma \ar[r] & Y}\]
is homology cartesian.
\item For any pair of simplices $\sigma \colon \Delta^m \to Y$ and $\tau \colon \Delta^n \to Y$ and for any diagram
\[\xymatrix{\Delta^{m} \ar[rd]_\sigma \ar[rr]^{\alpha_\ast} & & \Delta^n \ar[dl]^\tau \\
& Y & }\]
the induced map on pullbacks along $f$
\[f^{-1}(\alpha_\ast) \colon f^{-1}(\sigma) \to f^{-1}(\tau)\]
is a homology equivalence.
  \end{enumerate}
\end{lemma}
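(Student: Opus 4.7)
The implication $(1) \Rightarrow (2)$ is straightforward. Factor $f \colon X \to Y$ as $X \stackrel{\simeq}{\cof} W \surj Y$ with the second map a fibration. Any morphism $\alpha_\ast \colon \sigma \to \tau$ in $Simp(Y)$ fits in a commuting square
\[\xymatrix{f^{-1}(\sigma) \ar[r]^{f^{-1}(\alpha_\ast)} \ar[d] & f^{-1}(\tau) \ar[d] \\ \Delta^m \times_Y W \ar[r] & \Delta^n \times_Y W}\]
whose vertical arrows are homology equivalences by $(1)$ and whose bottom arrow is the pullback of the fibration $W \surj Y$ along a map between contractible simplicial sets, hence a weak equivalence. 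Two-out-of-three for homology equivalences forces $f^{-1}(\alpha_\ast)$ to be a homology equivalence.

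For the converse $(2) \Rightarrow (1)$, I would first reduce to the case of a vertex of $Y$. Given $\sigma \colon \Delta^m \to Y$ and any vertex $v$ of $\Delta^m$ with image $y = \sigma \circ v$, the commuting square
\[\xymatrix{f^{-1}(y) \ar[r] \ar[d] & f^{-1}(\sigma) \ar[d] \\ \Delta^0 \times_Y W \ar[r] & \Delta^m \times_Y W}\]
has top arrow a homology equivalence by $(2)$ (applied to the morphism $v \colon y \to \sigma$ in $Simp(Y)$) and bottom arrow a weak equivalence (inclusion of the fiber over $v$ of the fibration $\Delta^m \times_Y W \to \Delta^m$, whose base is contractible). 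Hence $(1)$ at $\sigma$ is equivalent to $(1)$ at the vertex $y$, and it suffices to verify the vertex case.

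The vertex case---showing that $F_y := f^{-1}(y)$ is homology-equivalent to the homotopy fiber $\tilde F_y := \Delta^0 \times_Y W$ for every $y \in Y_0$---is the main obstacle. My plan is to exploit the homotopy colimit decomposition $X \simeq \hocolim_{Simp(Y)} f^{-1}$ established above, together with the analogous $W \simeq \hocolim_{Simp(Y)} \tilde f^{-1}$ for the fibration $\tilde f \colon W \surj Y$. Condition $(2)$ ensures that $f^{-1}$ sends every morphism of $Simp(Y)$ to a homology equivalence, so the bisimplicial spectral sequence associated with $\coprod_\ast f^{-1}$ takes the Serre-type form $E^2_{p,q} = H_p(Y; H_q(F))$, with a locally constant coefficient system assembled from the fibers $F_y$, converging to $H_\ast(X)$. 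The corresponding spectral sequence for $\tilde f$ is the classical Serre spectral sequence $E^2_{p,q} = H_p(Y; H_q(\tilde F))$ converging to $H_\ast(W) \cong H_\ast(X)$. The natural transformation $f^{-1} \to \tilde f^{-1}$ induces a map of spectral sequences which agrees on $E^\infty$-pages, and a careful spectral sequence comparison then forces the coefficient systems $H_q(F)$ and $H_q(\tilde F)$ to be isomorphic as local systems on $Y$, yielding the desired fiberwise homology equivalence.
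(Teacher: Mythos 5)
Your proof of $(1)\Rightarrow(2)$ is correct and matches the paper's argument in spirit: factor $f$ through a fibration, note that the bottom map in your naturality square is a weak equivalence because the base is a map of contractibles pulled back along a fibration, and apply two-out-of-three. Your reduction to the vertex case in $(2)\Rightarrow(1)$ is also sound.

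The real problem is the vertex case itself, and there you have a genuine gap. You want to conclude from a map of (Serre-type) spectral sequences that agrees on the base $Y$ and on abutments that it is also an isomorphism on fibers. That is the \emph{Zeeman} comparison theorem, not the ordinary comparison theorem for spectral sequences, and in the direction you need it comes with nontrivial hypotheses. The statement ``iso on $E_2 \Rightarrow$ iso on abutment'' is what one gets cheaply; the converse you invoke --- ``iso on base and abutment $\Rightarrow$ iso on fiber'' --- requires at minimum a simple (trivial $\pi_1$-action) system of local coefficients, or a delicate inductive argument handling local systems, and you have done nothing to verify this. Worse, in this situation there is no reason to expect the action of $\pi_1(Y)$ on $H_q(F)$ to be trivial. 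Writing ``a careful spectral sequence comparison then forces the coefficient systems to be isomorphic'' is exactly the step that needs a proof, and it is where the entire difficulty of the lemma sits.

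The paper avoids this by citing a packaged result (here labelled \ref{thm:homolB}, which is \cite[IV,5.11]{gj}): if a functor $I \to sSet$ sends every morphism to a homology equivalence, then each value maps into the homotopy colimit by a square that is homology cartesian over $NI$. Condition $(2)$ is exactly the hypothesis of that theorem for $f^{-1} \colon Simp(Y) \to sSet$, and the rest of the paper's proof is diagram bookkeeping: the outer rectangle $f^{-1}(\sigma)\to X$ over $\Delta^m \to Y$ is juxtaposed with two homotopy-cartesian squares (coming from the contractions $\Delta^m\simeq \ast$ and $\coprod_\ast f^{-1}\to X$, $\coprod_\ast id_Y^{-1}\to Y$), and Lemma \ref{lemma:I+II} peels these off to leave the square you want. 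Your plan is morally a proof-from-scratch of \ref{thm:homolB}, but the key analytic step is left as a black box. To make your argument self-contained you would either have to prove the needed Zeeman-type statement with local coefficients or do as the paper does and invoke \cite[IV,5.11]{gj}.
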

In the proof of this lemma we will use the following result which is proven in \cite[IV,5.11]{gj}, see \cite{grcomprev} for a different proof of \ref{lemma:hfibprop}.
\begin{theorem}\label{thm:homolB}
  Let $X \colon I \to sSet$ be a functor such that for any morphism $i\to j$ in $I$ the induced map $X(i) \to X(j)$ is a homology equivalence, then for all objects $i$ of $I$ the pullback diagram
\[\xymatrix{X(i) \ar[r] \ar[d] & \hocolim_I X \ar[d] \\
\ast \ar[r] & NI }\]
is homology cartesian.
\end{theorem}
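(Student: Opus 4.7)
The plan is to realize $\hocolim_I X$ as the diagonal of the bisimplicial replacement
\[Y_{p,q} = \coprod_{\sigma \in N_pI} X(\sigma(0))_q,\]
equipped with its natural bisimplicial projection to $NI$ (viewed as constant in the $q$-direction), and to reduce to the case of a single simplex before gluing. The statement reduces, after factoring $\hocolim_I X \to NI$ as a trivial cofibration followed by a Kan fibration $p\colon W \twoheadrightarrow NI$, to showing that the fiber $F_i := p^{-1}(\{i\})$ is homology equivalent to $X(i)$ via the canonical map.

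The first step is to handle the case $I = [n]$ (so $NI = \Delta^n$), proving by induction on $n$ that each vertex inclusion $X(k) \hookrightarrow \hocolim_{[n]}X$ is a homology equivalence. The base $n = 0$ is immediate. For the inductive step I would decompose $\hocolim_{[n]}X$ as a pushout of $\hocolim_{[n-1]}X$ (corresponding to the face opposite the last vertex $n$) along $X(n-1)$ with the mapping cylinder of the structure map $X(n-1) \to X(n)$; applying Mayer--Vietoris together with the inductive hypothesis and the assumption that each $X(i) \to X(j)$ is a homology equivalence then identifies the homology of the pushout with that of any single $X(k)$.

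For general $I$, filter $\hocolim_I X \to NI$ by pulling back along the skeleta $NI^{(n)} \subseteq NI$ and induct on $n$. The simplex case, applied to each nondegenerate simplex $\sigma\colon [n] \to I$, shows that the pullback of $\hocolim_I X$ along $\sigma$ is homology cartesian over $\Delta^n$. Iterating Lemma~\ref{lemma:I+II} to paste homology cartesian squares along the pushout $\partial\Delta^n \hookrightarrow \Delta^n$, combined with Mayer--Vietoris on source and target, propagates the homology cartesian property from individual cells to each skeleton; taking colimits in $n$ completes the proof. The main obstacle is the Mayer--Vietoris pushout step: one must verify that homology cartesianness is preserved under skeletal gluing, which amounts to checking that the Mayer--Vietoris long exact sequences for the pullbacks of $\hocolim_I X$ and of $NI$ along $\partial\Delta^n \hookrightarrow \Delta^n$ are compatible via the comparison map. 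This is routine once one passes to functorial fibrant replacements over each skeleton, but requires careful bookkeeping to ensure the inductive hypothesis applies at every gluing.
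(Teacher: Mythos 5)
The paper offers no proof of this statement at all---it is quoted from \cite[IV,5.11]{gj}---so your attempt has to stand on its own. Your reduction to the fiber of a fibrant replacement and your identification of the pullback of $\hocolim_I X \to NI$ along a simplex $\sigma \colon \Delta^n \to NI$ with $\hocolim_{[n]}(X\circ\sigma) \to \Delta^n$ are both correct. The case $I=[n]$ is also essentially fine, although your pushout decomposition is not literally true for the bar-construction model of the homotopy colimit (already for $n=2$ the homotopy colimit contains a $\Delta^2\times X(0)$-shaped piece that the iterated mapping cylinder lacks); you do not need Mayer--Vietoris here at all, since $[n]$ has a terminal object, so the canonical map $\hocolim_{[n]}X \to X(n)$ is a weak equivalence and two-out-of-three applied to $X(k)\to \hocolim_{[n]}X \to X(n)$ gives the claim directly.

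The genuine gap is the gluing step, which you flag as the main obstacle and then declare routine. It is not: homology-cartesianness is not a local property of the base that can be propagated by comparing Mayer--Vietoris sequences, because homotopy fibers are not local in the base. Concretely, if $\tilde E \to NI$ is a fibrant replacement of $E=\hocolim_I X \to NI$, the restriction $E|_{B_j} \to \tilde E|_{B_j}$ over a piece $B_j$ of a decomposition of $NI$ is in general \emph{not} a homology equivalence (for $\ast \to S^1$ with path-space replacement, restricting over an arc gives $\ast \to \coprod_{\Z}\ast$), so the Mayer--Vietoris sequence of $\tilde E = \tilde E_1 \cup \tilde E_2$ cannot be compared term-by-term with anything built from the homotopy fibers of the $E|_{B_j}\to B_j$. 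What you actually need at each stage of the skeletal induction is the McDuff--Segal gluing lemma for homology fibrations, and to apply it you must strengthen the inductive hypothesis from ``homology cartesian over the chosen vertex'' to ``$E^{(n)}\to NI^{(n)}$ is a homology fibration all of whose fibers are compatibly homology equivalent'' (this is where the hypothesis on $X$ must enter, and where one must also worry that attaching maps $\partial\Delta^n\to NI^{(n-1)}$ need not be injective). That gluing lemma is essentially equivalent in strength to the theorem you are proving---the paper derives its own gluing-type statement, the implication $2\Rightarrow 1$ of Lemma \ref{lemma:hfibprop}, \emph{from} Theorem \ref{thm:homolB}---so invoking it as routine is circular in spirit and leaves the hard content unproved. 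The standard proofs avoid cell-by-cell gluing entirely and instead compare the spectral sequence $E_2^{p,q}=\pi_p EH_q(X) \Rightarrow H_{p+q}(\hocolim_I X)$ of the bisimplicial set $\coprod_\ast X$ with the corresponding spectral sequence for a fibrant replacement over $NI$, using that the functor $H_q(X(-))$ inverts all morphisms; if you want to keep a cellular argument you must prove the gluing lemma in full, as in \cite{mcduff-segal} or \cite{grcomprev}.
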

\begin{proof}[Proof of lemma \ref{lemma:hfibprop}]
  $\mathit{1} \implies \mathit{2}$: We begin by factoring $f$ as 
\[X \stackrel{g}{\cof} W \stackrel{\bar{f}}{\surj} Y,\]
where $g$ is a weak equivalence. Condition 1 says precisely that the natural transformation $g_\ast \colon f^{-1} \to \bar{f}^{-1}$ has components which are homology equivalences. Since $\bar{f}$ is a fibration the functor $\bar{f}^{-1}$ sends all maps in $Simp(Y)$ to weak equivalences. Therefore, a map $\alpha \colon \sigma \to \tau$ in $Simp(Y)$ gives a naturality square
\[\xymatrix{f^{-1}(\sigma) \ar[r]^{f^{-1}(\alpha_\ast)} \ar[d]_{g_{\ast,\sigma}} & f^{-1}(\tau) \ar[d]^{g_{\ast,\tau}} \\
\bar{f}^{-1}(\sigma) \ar[r]^{\simeq}_-{\bar{f}^{-1}(\alpha_\ast)} & \bar{f}^{-1}(\tau) }\]
where the vertical maps are homology equivalences and the lower horizontal map is a weak equivalence. It follows that $f^{-1}(\alpha_\ast)$ is a homology equivalence. \\
$\mathit{2} \implies \mathit{1}$ (cp. \cite[IV, 5.18]{gj}):
For every simplex $\sigma \colon \Delta^m \to Y$ there is a diagram of bisimplicial sets
\[\xymatrix{f^{-1}(\sigma) \ar@{} [dr] |{{\bf I}} \ar[r] \ar[d] & \coprod {}_\ast f^{-1}  \ar[r]^-\simeq \ar[d] \ar@{} [dr] |{{\bf II}}& X \ar[d]^f\\ \Delta^m \ar[r] \ar[d]_\simeq \ar@{} [dr] |{{\bf III}}& \coprod {}_\ast id_Y^{-1} \ar[r]^-\simeq \ar[d]^\simeq & Y \\
\ast \ar[r] & \coprod_{NSimp(Y)} \ast. &}\]
Write $d({\bf I})$ for the square obtained by taking diagonals in the square ${\bf I}$ and similarly for the other sub-diagrams. The square $d({\bf I} + {\bf III})$ is
\[\xymatrix{f^{-1}(\sigma) \ar[r] \ar[d] & \hocolim f^{-1} \ar[d] \\
\ast \ar[r]_-\sigma & NSimp(Y), }\]
which is homology cartesian by \ref{thm:homolB}. Since the square $d({\bf III})$ is homotopy cartesian it follows by \ref{lemma:I+II} that $d({\bf I})$ is homology cartesian. The square $d({\bf II})$ is also homotopy cartesian so it follows, again by \ref{lemma:I+II}, that $d({\bf I} + {\bf II})$ is homology cartesian.
\end{proof}

\begin{definition}
  A map $f \colon X \to Y$ of simplicial sets is called a homology fibration if it satisfies one (and hence both) of the conditions of lemma \ref{lemma:hfibprop}.
\end{definition}

\begin{definition}
  A map $p \colon E \to B$ of topological spaces is called a homology fibration if for any point $b \in B$ the natural map from the fiber $F_b$ at $b$ to the homotopy fiber $hF_b$ induces an isomorphism on integral homology. 
\end{definition}

The relation between the two kinds of homology fibrations is given as follows:
\begin{theorem}\cite[4.4]{grcomprev}\label{hfibeq}
  A map $f \colon X \to Y$ of simplicial sets is a homology fibration if and only if the induced map on realizations $|f| \colon |X| \to |Y|$ is a homology fibration of topological spaces.
\end{theorem}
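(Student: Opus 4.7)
The plan is to compare the two conditions through a common fibrant replacement of $f$. Factor $f$ as $X \stackrel{g}{\cof} W \stackrel{\bar f}{\surj} Y$ with $g$ a trivial cofibration and $\bar f$ a Kan fibration. Since geometric realisation preserves weak equivalences and acyclic cofibrations, sends Kan fibrations to Serre fibrations, and commutes with pullbacks along Kan fibrations, $|X| \to |W| \to |Y|$ is an analogous Serre fibration replacement of $|f|$. Under this setup Lemma \ref{lemma:hfibprop}(1) reformulates the simplicial condition on $f$ as asking that $g_\ast \colon f^{-1}(\sigma) \to \bar f^{-1}(\sigma)$ be a homology equivalence for every simplex $\sigma \colon \Delta^m \to Y$, while the topological condition reformulates as asking that $|g|_\ast \colon |f|^{-1}(b) \to |\bar f|^{-1}(b)$ be a homology equivalence for every $b \in |Y|$. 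Both conditions concern the same map $|g|$, restricted to different subspaces.

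Because $|\bar f^{-1}(\sigma)| \to |\Delta^m|$ is a Serre fibration with contractible base, the inclusion $|\bar f|^{-1}(b) \hookrightarrow |\bar f^{-1}(\sigma)|$ of any interior fiber is a weak equivalence. A two-out-of-three argument applied to the commutative square with vertical fiber inclusions and horizontal maps $g_\ast$, $|g|_\ast$ then reduces the forward direction ($\Rightarrow$) to the assertion that $|f|^{-1}(b) \hookrightarrow |f^{-1}(\sigma)|$ is a homology equivalence for every $\sigma$ and every interior point $b$. Since the simplicial hypothesis on $f$ restricts to a similar hypothesis on each $f^{-1}(\sigma) \to \Delta^m$, and likewise for the topological hypothesis, one further reduces the entire theorem to the case $Y = \Delta^m$, where one must match up ``all simplex-preimages $f^{-1}(\tau)$ are simplicially homology equivalent to each other'' with ``all point-fibers $|f|^{-1}(b)$ are topologically homology equivalent to each other''. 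The reverse direction ($\Leftarrow$) follows the same reduction in the opposite direction.

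The main obstacle in this reduced case is that the source map $|f^{-1}(\sigma)| \to |\Delta^m|$ is a priori not a Serre fibration, so one cannot strip off a single fiber by fibrational arguments. I would circumvent this using the bisimplicial presentation $\coprod_\ast f^{-1} \stackrel{\simeq}{\to} X$ already exploited in the proof of Lemma \ref{lemma:hfibprop}: express the topological fiber $|f|^{-1}(b)$ as a homotopy colimit of $|f^{-1}(\tau)|$'s indexed over the poset of simplices of $\Delta^m$ whose realisation contains $b$, and transport the simplicial homology equivalences through this colimit to produce the fiberwise statement, running the same argument in reverse to pass back. This bisimplicial strategy is the technical heart of the proof in \cite[4.4]{grcomprev}, which I would ultimately follow.
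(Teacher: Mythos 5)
The paper does not give a proof of this theorem; it is stated with the attribution \cite[4.4]{grcomprev} and used as an imported black box. So there is no ``paper's own proof'' to compare your argument against.

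As for your attempt on its own merits: the high-level strategy is sound. Factoring $f$ as a trivial cofibration $g$ followed by a Kan fibration $\bar f$, realizing (using that $|\cdot|$ preserves finite limits, weak equivalences, and sends Kan fibrations to Serre fibrations), and re-expressing both the simplicial and the topological homology-fibration conditions as the statement that $g_\ast$ is a homology equivalence on fibers is the right framing, and your observation that $|\bar f|^{-1}(b) \hookrightarrow |\bar f^{-1}(\sigma)|$ is a weak equivalence over the contractible base $|\Delta^m|$ correctly reduces the comparison to the map $|f|^{-1}(b) \hookrightarrow |f^{-1}(\sigma)|$. However, at precisely that point---the technical heart of the theorem, which you yourself flag as the main obstacle---your argument stops being a proof: you describe the intended move (write $|f|^{-1}(b)$ as a homotopy colimit of the $|f^{-1}(\tau)|$'s over simplices containing $b$) but do not establish it, and then explicitly say you would ``ultimately follow'' the proof in the cited reference. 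As it stands this is a plausible outline with a genuine gap at the decisive step, not a self-contained argument. Two further, more minor points: geometric realization commutes with \emph{all} pullbacks of simplicial sets, not just those along Kan fibrations, so the hedge is unnecessary; and the reduction to $Y = \Delta^m$ quietly uses that both notions of homology fibration are stable under base change, which is true but deserves a word (for the simplicial side this is \ref{lemma:basechange}, for the topological side it is the invariance of homotopy fibers under pullback), and should be run against a point $b$ lying in the interior of a \emph{nondegenerate} simplex so that fiber and homotopy fiber transport correctly.
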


Recall from \cite{confiter} the Segal edgewise subdivision functor $Sd \colon sSet \to sSet$. An important property of this construction is that the realization of a simplicial set $X$ is naturally homeomorphic to the realization of its subdivision $SdX$. Knowing this, we get the next lemma from \ref{hfibeq}.

\begin{lemma}\label{lemma:hfibsd}
  A map $f \colon X \to Y$ of simplicial sets is a homology fibration if and only if the induced map $Sdf \colon SdX \to SdY$ is a homology fibration.
\end{lemma}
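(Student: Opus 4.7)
The plan is to reduce the statement to Theorem \ref{hfibeq} by using the natural homeomorphism between the realization of a simplicial set and the realization of its subdivision.

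First, I would invoke \ref{hfibeq} twice: the map $f$ is a homology fibration if and only if $|f| \colon |X| \to |Y|$ is a homology fibration of topological spaces, and likewise $Sd f$ is a homology fibration if and only if $|Sd f| \colon |Sd X| \to |Sd Y|$ is a homology fibration of topological spaces. So it suffices to show that $|f|$ is a topological homology fibration if and only if $|Sd f|$ is.

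Next, I would use the natural homeomorphism $h_X \colon |Sd X| \xrightarrow{\cong} |X|$ recalled from \cite{confiter}. By naturality in $X$, the square with vertical maps $|Sd f|$ and $|f|$ and horizontal homeomorphisms $h_X, h_Y$ commutes. In particular, $|f|$ is conjugate, via homeomorphisms on source and target, to $|Sd f|$. Since being a homology fibration of topological spaces is a property preserved under replacing both source and target by homeomorphic spaces along commuting homeomorphisms (fibers and homotopy fibers over corresponding points are homeomorphic, hence have isomorphic integral homology), $|f|$ is a homology fibration if and only if $|Sd f|$ is.

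Combining these two steps gives the desired equivalence. The argument is essentially a formal consequence of \ref{hfibeq} together with the homeomorphism $|Sd X| \cong |X|$; there is no real obstacle beyond noting the naturality of this homeomorphism, which ensures $|f|$ and $|Sd f|$ are identified as maps of topological spaces.
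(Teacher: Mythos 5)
Your proof is correct and follows exactly the paper's intended argument: apply Theorem \ref{hfibeq} to both $f$ and $Sd f$, then transport the topological homology-fibration property across the natural homeomorphism $|Sd X| \cong |X|$. The paper leaves these details implicit; you have simply spelled them out.
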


The next lemma follows easily from condition 2 of lemma \ref{lemma:hfibprop}.

\begin{lemma}\label{lemma:basechange}
  Homology fibrations are closed under base change.
\end{lemma}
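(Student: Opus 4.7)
The plan is to verify condition 2 of Lemma \ref{lemma:hfibprop} for the base change. Suppose $f \colon X \to Y$ is a homology fibration and $g \colon Z \to Y$ is an arbitrary map. Form the pullback square defining $g^{\ast}f \colon X \times_Y Z \to Z$ and call the top horizontal map $\tilde{g} \colon X \times_Y Z \to X$.

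The key observation is the pasting lemma for pullbacks: for any simplex $\sigma \colon \Delta^m \to Z$, the square
\[\xymatrix{(g^{\ast}f)^{-1}(\sigma) \ar[r] \ar[d] & X \times_Y Z \ar[r]^-{\tilde g} \ar[d]_{g^{\ast}f} & X \ar[d]^{f} \\ \Delta^m \ar[r]_-{\sigma} & Z \ar[r]_-{g} & Y }\]
exhibits $(g^{\ast}f)^{-1}(\sigma)$ as a pullback of $f$ along $g\circ\sigma$, so there is a natural isomorphism $(g^{\ast}f)^{-1}(\sigma) \cong f^{-1}(g\circ\sigma)$. Given a diagram
\[\xymatrix{\Delta^{m} \ar[rd]_\sigma \ar[rr]^{\alpha_\ast} & & \Delta^n \ar[dl]^\tau \\ & Z & }\]
post-composition with $g$ yields an analogous diagram over $Y$ with the same $\alpha_\ast$, and under the identification above the map $(g^{\ast}f)^{-1}(\alpha_\ast)$ becomes $f^{-1}(\alpha_\ast) \colon f^{-1}(g\circ\sigma) \to f^{-1}(g\circ\tau)$. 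Since $f$ satisfies condition 2 of Lemma \ref{lemma:hfibprop}, this map is a homology equivalence, so $g^{\ast}f$ satisfies condition 2 as well.

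There is essentially no obstacle here; the whole content is packaging the pullback pasting lemma together with the characterisation of homology fibrations via condition 2. It is worth noting that condition 1 would be awkward to verify directly, since factoring $g^{\ast}f$ as trivial cofibration followed by fibration is not obviously compatible with the analogous factorisation of $f$; using condition 2 avoids this issue entirely, which is precisely why the equivalence of the two conditions in Lemma \ref{lemma:hfibprop} is useful.
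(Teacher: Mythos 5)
Your proof is correct and is exactly the argument the paper has in mind: the paper states only that the lemma "follows easily from condition 2 of lemma \ref{lemma:hfibprop}," and your pasting-of-pullbacks argument is the standard way to cash that out.
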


\begin{lemma}
   Let $f \colon X  \to Y$ be a homology fibration and let $g \colon Z \to Y$ be any map. Then the pullback square
\[\xymatrix{Z \times_Y X \ar[r] \ar[d]_h & X \ar[d]^f \\
Z \ar[r]_g & Y}\]
is homology cartesian.
\end{lemma}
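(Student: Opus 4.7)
The strategy is to apply the definition of homology cartesian square directly. Factor $f \colon X \to Y$ as $X \stackrel{g'}{\cof} W \stackrel{\bar f}{\surj} Y$; the task then reduces to showing that the induced comparison $\phi \colon Z \times_Y X \to Z \times_Y W$ is a homology equivalence. Both projections $h \colon Z \times_Y X \to Z$ and $\bar h \colon Z \times_Y W \to Z$ are homology fibrations: the former by \ref{lemma:basechange}, and the latter because it is the pullback of the genuine fibration $\bar f$. Moreover $\phi$ is a map over $Z$.

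For each simplex $\sigma \colon \Delta^m \to Z$ there are canonical identifications $h^{-1}(\sigma) = \Delta^m \times_Y X = f^{-1}(g\sigma)$ and $\bar h^{-1}(\sigma) = \bar f^{-1}(g\sigma)$, under which the simplex-pullback of $\phi$ becomes the natural map $f^{-1}(g\sigma) \to \bar f^{-1}(g\sigma)$. Since $f$ is a homology fibration, condition 1 of \ref{lemma:hfibprop} tells us precisely that this map is a homology equivalence for every $\sigma$. Hence $\phi$ is a simplex-fiberwise homology equivalence.

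To upgrade this to the statement that $\phi$ itself is a homology equivalence, we apply the homotopy colimit description from the lemma preceding \ref{lemma:hfibprop}: the diagonals of the bisimplicial sets $\coprod_\ast h^{-1}$ and $\coprod_\ast \bar h^{-1}$ are naturally weakly equivalent to $Z \times_Y X$ and $Z \times_Y W$ respectively, and $\phi$ is identified with the diagonal of the natural transformation $h^{-1} \to \bar h^{-1}$. In each bisimplicial degree $n$ the induced map
\[\coprod_{\sigma \in N_n Simp(Z)} h^{-1}(\sigma(0)) \to \coprod_{\sigma \in N_n Simp(Z)} \bar h^{-1}(\sigma(0))\]
is a disjoint union of homology equivalences, hence itself a homology equivalence. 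A standard spectral sequence argument for bisimplicial sets then shows that the induced map on diagonals, namely $\phi$, is a homology equivalence.

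The principal technical hurdle is this last bisimplicial comparison step. It is classical, and is in the same spirit as the argument proving \ref{thm:homolB}, but it is the only non-formal ingredient in the proof; once granted, the remainder is a formal identification of pullbacks with simplex-fibers and a direct invocation of \ref{lemma:hfibprop}.
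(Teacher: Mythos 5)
Your proof is correct and follows essentially the same route as the paper: both factor $f$, identify the induced comparison map with the simplex-fiberwise transformation $h^{-1} \to \bar{h}^{-1}$ (a homology equivalence because $f$ is a homology fibration), and upgrade this to a global homology equivalence via a homotopy-colimit spectral sequence. The only packaging difference is that the paper uses the translation object $E H_q(h^{-1},\Z)$ with the spectral sequence of \cite[IV,5.1]{gj}, while you pass through the bisimplicial set $\coprod_\ast h^{-1}$ and a levelwise-homology comparison; these amount to the same spectral-sequence argument.
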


\begin{proof}
 Factor the map $f$ as 
\[X \stackrel{i}{\cof} W \stackrel{\bar{f}}{\surj} Y,\]
where $i$ is also a weak equivalence. There is an induced factorization  
\[Z \times_Y X \stackrel{j}{\to} Z \times_Y W \stackrel{\bar{h}}{\surj} Z\]
of $h$ and we must show that $j$ is a homology equivalence. 

Let 
\[H_q( h^{-1},\Z) \colon Simp(Z) \to Ab\]
be the composite functor given by 
\[\sigma \mapsto h^{-1}(\sigma)  \mapsto H_q( h^{-1}(\sigma), \Z),\]
where $Ab$ is the usual category of abelian groups. The natural transformation $j_\ast \colon H_q( h^{-1},\Z) \to H_q( \bar{h}^{-1},\Z)$ is an isomorphism of functors, since by lemma \ref{lemma:basechange} $h$ is a homology fibration. Recall that for a functor $F \colon I \to Ab$ the \emph{translation object} $EF$ of $F$ is the simplicial abelian group given in degree $n$ by
\[EF_n = \bigoplus_{i_0 \to \cdots \to i_n} F(i_0) \]
with structure maps as for $\coprod_\ast F$ (see \cite[IV,2.1]{gj}). The map 
\[E H_q( h^{-1},\Z) \to E H_q( \bar{h}^{-1},\Z)\]
induced by $j_\ast$ is an isomorphism. By \cite[IV,5.1]{gj} there is a first quadrant spectral sequence
\[E^{p,q}_2 = \pi_p E  H_q( h^{-1},\Z) \implies H_{p+q}(Z \times_Y X,\Z) \]
and a corresponding one for $\bar{h}^{-1}$ converging to $H_{p+q}(Z \times_Y W,\Z)$. The map $j$ induces an isomorphism of $E_2$-pages and is therefore a homology equivalence by the comparison theorem for spectral sequences.
\end{proof}

For a homology fibration $f \colon X \to Y$ the functor $H_q( f^{-1},\Z)$ sends all maps to isomorphism and hence factors through the groupoid $GSimp(Y)$ obtained from $Simp(Y)$ by inverting all morphisms (see \cite[p. 235]{gj}). This groupoid is naturally equivalent to the fundamental groupoid of the realization $|X|$ (see \cite[III,1.1]{gj}). If for any pair of maps $\xi,\zeta \colon \sigma \to \tau$ in $GSimp(Y)$ the induced maps 
\[\xi_\ast,\zeta_\ast \colon H_q( f^{-1}(\sigma),\Z) \to H_q( f^{-1}(\tau),\Z)\]
agree, we say that the fundamental groupoid acts trivially on the homology of the fibers of $f$. If $Y$ is connected then for any simplex $\rho$ in $Y$ there is a unique isomorphism of functors 
\[(\sigma \mapsto H_q( f^{-1}\sigma,\Z)) \cong (\sigma \mapsto H_q( f^{-1}\rho,\Z))\]
whose value at $\rho$ is the identity map.
\begin{proposition}\label{lemma:lproper}
  Let $f \colon X \to Y$ be a homology fibration such that the fundamental groupoid of $Y$ acts trivially on the homology of the fibers of $f$. Then, for any homology equivalence $g \colon Z \to Y$ the induced map
\[g' \colon Z \times_Y X \to X \]
is a homology equivalence. 
\end{proposition}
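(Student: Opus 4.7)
The plan is to mimic the spectral sequence argument from the proof of the previous lemma. By lemma \ref{lemma:basechange} the base change $h \colon Z \times_Y X \to Z$ is again a homology fibration, so \cite[IV,5.1]{gj} supplies first quadrant spectral sequences
\[\pi_p E H_q(h^{-1}, \Z) \implies H_{p+q}(Z \times_Y X) \qquad \text{and} \qquad \pi_p E H_q(f^{-1}, \Z) \implies H_{p+q}(X).\]
The map $g'$ is realized by the map of bisimplicial sets $\coprod_\ast h^{-1} \to \coprod_\ast f^{-1}$ induced by $g_\ast \colon Simp(Z) \to Simp(Y)$ together with the natural identification $h^{-1} \cong f^{-1} \circ g_\ast$; this gives a morphism of spectral sequences, so by the comparison theorem it will be enough to show the induced map of $E_2$-pages is an isomorphism.

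To identify the $E_2$-pages I will use the trivial monodromy assumption: the functor $H_q(f^{-1},\Z)$ factors through $\pi_0(Y)$ and so is constant with some value $A_{c,q}$ on each $Simp(Y_c)$. For a constant functor with value $A$ on $Simp(Y_c)$ the translation object is the simplicial abelian group $\Z[NSimp(Y_c)] \otimes A$, whose homotopy groups compute $H_\ast(Y_c; A)$ since $NSimp(Y_c) \to Y_c$ is a weak equivalence. Summing over components then gives
\[\pi_p E H_q(f^{-1},\Z) \cong \bigoplus_{c \in \pi_0 Y} H_p(Y_c; A_{c,q}),\]
and the analogous identification for $h$ reads
\[\pi_p E H_q(h^{-1},\Z) \cong \bigoplus_{c \in \pi_0 Y} H_p(g^{-1}(Y_c); A_{c,q}),\]
with the induced map given componentwise by the restrictions $g^{-1}(Y_c) \to Y_c$.

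To conclude, I observe that $g$ respects the decompositions $Y = \bigsqcup_c Y_c$ and $Z = \bigsqcup_c g^{-1}(Y_c)$, so the integral homology equivalence $g$ restricts to an integral homology equivalence on each component; the universal coefficient theorem then promotes this to an isomorphism with coefficients in any $A_{c,q}$. Hence the map of $E_2$-pages is an isomorphism and the comparison theorem completes the argument. The main obstacle is the identification of $\pi_p E F$ with ordinary homology of $Y_c$: this step crucially uses the trivial monodromy hypothesis, since with nontrivial monodromy one would be comparing local coefficient homologies for which integral universal coefficients is insufficient.
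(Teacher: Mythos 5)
Your proof is essentially the same as the paper's. The paper also invokes the spectral sequence of \cite[IV,5.1]{gj}, identifies the $E_2$-page as $H_p(Y;H_q(F))$ using the trivial-monodromy hypothesis to make $H_q(f^{-1},\Z)$ a constant functor (working with $Y$ connected without loss of generality rather than summing over components explicitly as you do), and concludes by the universal coefficient theorem together with the hypothesis that $g$ is a homology equivalence.
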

\begin{proof}

Assume, without loss of generality, that $Y$ is connected and choose a fiber $F$ over some vertex of $Y$. Then, by \cite[IV,5.1]{gj}, there are Serre spectral sequences, for $f$
\[E_2^{p,q}=H_p(X,H_q(F)) \implies H_{p+q}(Y), \]
and for the pullback of $f$ along $g$
\[E_2^{p,q}=H_p(Z,H_q(F)) \implies H_{p+q}(Z \times_Y X). \]
The map of $E_2$-pages induced by $g'$ is an isomorphism by the universal coefficient theorem and the fact that $g$ is a homology equivalence. It follows that $g'$ is a homology equivalence.

\end{proof}

We now turn to bisimplicial sets. A map of bisimplicial sets will be called a homology equivalence if the induced map on diagonals is a homology equivalence.

\begin{lemma}\label{lemma:bisimphfibprop}
  Let $f \colon X \to Y$ be a map of bisimplicial sets. The following are equivalent:
  \begin{enumerate}
  \item The diagonal $df \colon dX \to dY$ is a homology fibration.
\item \label{lemma:bisimphfibprop:prop2}For any pair of bisimplices $\sigma \colon \Delta^{m,n} \to Y$ and $\tau \colon \Delta^{p,q} \to Y$ and for any diagram
\[\xymatrix{\Delta^{m,n} \ar[rd]_\sigma \ar[rr]^{(\alpha,\beta)_\ast} & & \Delta^{p,q} \ar[dl]^\tau \\
& Y & }\]
the induced map on pullbacks along $f$
\[f^{-1}(\alpha,\beta)_\ast \colon f^{-1}(\sigma) \to f^{-1}(\tau)\]
is a homology equivalence.
  \end{enumerate}
\end{lemma}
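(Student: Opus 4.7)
The plan is to reduce the equivalence to Lemma \ref{lemma:hfibprop} applied to $df$, using the essential compatibility that the diagonal functor $d$ commutes with pullbacks: for every bisimplex $\sigma \colon \Delta^{m,n} \to Y$,
\[ d(f^{-1}(\sigma)) = (df)^{-1}(d\sigma), \]
where $d\sigma \colon \Delta^m \times \Delta^n \to dY$.

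For $1 \Rightarrow \ref{lemma:bisimphfibprop:prop2}$, I pick a bivertex $v = (i,j) \in \Delta^{m,n}$ and note that the compatibility $\tau \circ (\alpha,\beta)_\ast = \sigma$ forces $\sigma_v$ and $\tau_{(\alpha,\beta)_\ast(v)}$ to coincide as a common bivertex $y$ of $Y$. The commuting triangle
\[ \xymatrix{ d(f^{-1}(y)) \ar[r] \ar[dr] & d(f^{-1}(\sigma)) \ar[d] \\ & d(f^{-1}(\tau)) } \]
has horizontal and slanted arrows which are inclusions of the vertex fibers of the base-changed maps $d(f^{-1}(\sigma)) \to \Delta^m \times \Delta^n$ and $d(f^{-1}(\tau)) \to \Delta^p \times \Delta^q$. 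By Lemma \ref{lemma:basechange} these base changes are homology fibrations, and since their bases are contractible their fundamental groupoids act trivially on fibers, so Proposition \ref{lemma:lproper} applied to the vertex inclusions turns both the horizontal and slanted arrows into homology equivalences. The vertical map $d(f^{-1}((\alpha,\beta)_\ast))$ is then a homology equivalence by two-out-of-three, which is the content of condition \ref{lemma:bisimphfibprop:prop2}.

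For $\ref{lemma:bisimphfibprop:prop2} \Rightarrow 1$, by Lemma \ref{lemma:hfibprop} it suffices to verify condition 2 there for $df$: for any morphism $\alpha_\ast \colon \sigma' \to \tau'$ in $Simp(dY)$, the map $(df)^{-1}(\alpha_\ast)$ is a homology equivalence. Via Yoneda the simplices $\sigma', \tau'$ correspond to bisimplices $\tilde{\sigma} \colon \Delta^{m,m} \to Y$, $\tilde{\tau} \colon \Delta^{n,n} \to Y$, and the compatibility translates (using $(dY)_k = Y_{k,k}$ with structure maps $(\phi,\phi)^\ast$) into a diagonal bisimplex morphism $(\alpha,\alpha)_\ast \colon \tilde{\sigma} \to \tilde{\tau}$. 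Condition \ref{lemma:bisimphfibprop:prop2} provides a homology equivalence $d(f^{-1}(\tilde{\sigma})) \to d(f^{-1}(\tilde{\tau}))$. To transfer this to $(df)^{-1}(\alpha_\ast)$, I adapt the $\hocolim$ diagram at the end of the proof of Lemma \ref{lemma:hfibprop} to the bisimplicial setting, using the category of bisimplices of $Y$ in place of $Simp(Y)$ and invoking a bisimplicial analog of Theorem \ref{thm:homolB}: a diagram $X \colon I \to bsSet$ sending all morphisms to bisimplicial homology equivalences has pullback squares $X(i) \to \hocolim_I X$, $\ast \to NI$ whose diagonals are homology cartesian. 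Together with Lemma \ref{lemma:I+II} applied on diagonals, this forces the requisite pullback squares for $df$ to be homology cartesian.

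The main obstacle is establishing the bisimplicial analog of Theorem \ref{thm:homolB} invoked above. The spectral-sequence argument from the simplicial case transfers, but care is required because bisimplicial homology equivalence is defined via diagonals rather than intrinsically, so the intermediate bisimplicial constructions ($\coprod_\ast$, $\hocolim$, translation objects) must be diagonalised at the correct stages before the convergence of the spectral sequence is applied.
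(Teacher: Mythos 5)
Your $1 \Rightarrow 2$ argument is essentially the paper's: pick a common bivertex, observe that the base-changed maps $d(f^{-1}(\sigma)) \to \Delta^m \times \Delta^n$ are homology fibrations over contractible bases, apply Proposition~\ref{lemma:lproper} to the vertex inclusions, and finish by two-out-of-three. This is correct.

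In $2 \Rightarrow 1$ your plan is right but you manufacture an obstacle that isn't there. You conclude by saying you need a ``bisimplicial analog of Theorem~\ref{thm:homolB}'' for functors $I \to bsSet$ and that its proof requires careful bookkeeping about where to diagonalise. The paper avoids this by diagonalising \emph{before} invoking Theorem~\ref{thm:homolB}: condition~2 says precisely that the composite functor $df^{-1} = d \circ f^{-1} \colon Simp(Y) \to sSet$ (with $Simp(Y)$ the category of \emph{bi}simplices of $Y$) takes values in homology equivalences of simplicial sets, so Theorem~\ref{thm:homolB} applies verbatim with $I = Simp(Y)$, $X = df^{-1}$. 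One then runs the same three-square diagram as in the proof of Lemma~\ref{lemma:hfibprop}, with $\coprod_\ast (df)^{-1}$ and $\coprod_\ast id_{dY}^{-1}$ indexed over $Simp(Y)$, and the horizontal maps in square~${\bf II}$ are weak equivalences by the bisimplicial version of the cofinality statement ([IV,5.17] in Goerss--Jardine). No new spectral-sequence argument is needed, and there is no ambiguity about where to apply $d$: you apply it once, to the functor $f^{-1}$, and never look at the bisimplicial objects again except through their diagonals. This is exactly the sense in which you were right that ``the intermediate constructions must be diagonalised at the correct stage''; the correct stage is the very first.

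A small additional wrinkle you flagged correctly: a simplex $\sigma' \colon \Delta^n \to dY$ corresponds to a diagonal bisimplex $\tilde\sigma \colon \Delta^{n,n} \to Y$, and $(df)^{-1}(\sigma')$ is \emph{not} $d(f^{-1}(\tilde\sigma))$ but rather its pullback along the diagonal $\Delta^n \to \Delta^n \times \Delta^n$. The $\hocolim$ diagram handles this automatically: the lower-left weak equivalence $\Delta^n \to \coprod_\ast id_{dY}^{-1}$ is exactly the comparison, and Lemma~\ref{lemma:I+II} is what transfers the homology-cartesian property across it. So your worry was legitimate, and the resolution is to run the three-square argument rather than try to match fibers vertex-by-vertex.
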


\begin{proof}
$1 \implies 2:$ Given a bisimplex $\sigma \colon \Delta^{m,n} \to Y$ we choose a vertex $v$ of $\sigma$. Since pullbacks commute with diagonals, we get a diagram
\[\xymatrix{ df^{-1}(v) \ar[r] \ar[d] &  df^{-1}(\sigma) \ar[r] \ar[d] & dX \ar[d]^{df} \\
\Delta^0 \ar[r] & \Delta^m \times \Delta^n \ar[r] & dY }\]
in which the two squares and the outer rectangle are pullback diagrams. The middle vertical map is a homology fibration, by \ref{lemma:basechange}, since it is a pullback of $df$. The lower left map is a weak equivalence, so it follows by \ref{lemma:lproper} that the induced map $df^{-1}(v) \to df^{-1}(\sigma)$ is a homology equivalence. By definition this means that the map $f^{-1}(v) \to f^{-1}(\sigma)$ is a homology equivalence. A map $(\alpha,\beta) \colon \sigma \to \tau$ gives a commuting triangle
\[\xymatrix{& f^{-1}(v) \ar[dl] \ar[dr] & \\
f^{-1}(\sigma) \ar[rr]_{f^{-1}(\alpha,\beta)} & & f^{-1}(\tau) .}\]
By the argument above the two downward maps are homology equivalences, so it follows that $f^{-1}(\alpha,\beta)$ is too.

$2 \implies 1:$ The proof follows roughly the same outline as the corresponding proof for simplicial sets. As for simplicial sets there is a category $Simp(Y)$ of bisimplices of $Y$ and condition 2 says that the functor $f^{-1} \colon Simp(Y) \to bisSet$ takes values in homology equivalences. Composing $f^{-1}$ with the diagonal functor $d \colon bisSet \to sSet$ gives a functor $df^{-1} \colon Simp(Y) \to sSet$ taking values in homology equivalences. For a simplex $\sigma \colon \Delta^n \to dY$ there is a diagram of bisimplicial sets \cite[IV,5.18]{gj}
\[\xymatrix{df^{-1}(\sigma) \ar@{} [dr] |{{\bf I}} \ar[r] \ar[d] & \coprod {}_\ast (df)^{-1}  \ar[r]^-\simeq \ar[d] \ar@{} [dr] |{{\bf II}}& dX \ar[d]^{df}\\ \Delta^n \ar[r] \ar[d]_\simeq \ar@{} [dr] |{{\bf III}}& \coprod {}_\ast id_{dY}^{-1} \ar[r]^-\simeq \ar[d]^\simeq & dY \\
\ast \ar[r] & \coprod_{NSimp(Y)} \ast. &}\]
The two horizontal maps in {\bf II} are weak equivalences by \cite[IV,5.17]{gj}.
We now conclude as in the proof of \ref{lemma:hfibprop}, that the rectangle ${\bf I}+{\bf II}$ is homology cartesian.
\end{proof}

\begin{definition}
  A map $f \colon X \to Y$ of bisimplicial sets is called a homology fibration if it satisfies one (and hence both) of the conditions of lemma \ref{lemma:bisimphfibprop}.
\end{definition}

The exposition of propositions \ref{prop:leveltoglobal} and \ref{cor:leveltoglobal}, and their proofs follows \cite{jardine} closely but any errors or omissions are my own. If $X$ is a bisimplicial set we will write $X_n$ for the simplicial set 
\[[p] \mapsto X_{n,p}.\]
\begin{proposition}\label{prop:leveltoglobal}
  Let $f \colon X \to Y$ be a map of bisimplicial sets such that for each $n \geq 0$ the map $f_n \colon X_n \to Y_n$ is a Kan fibration. Assume that for each $\theta \colon [m] \to [n]$ and each $v \in Y_{n,0}$ the induced map on fibers $f_n^{-1}(v) \to f_m^{-1}(\theta^\ast(v))$ is a homology equivalence. Then $f$ is a homology fibration.
\end{proposition}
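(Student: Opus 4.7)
The plan is to verify condition~\ref{lemma:bisimphfibprop:prop2} of Lemma~\ref{lemma:bisimphfibprop}: for any map of bisimplices $(\alpha,\beta) \colon \Delta^{m,n} \to \Delta^{p,q}$ over $Y$ with $\sigma = \tau \circ (\alpha,\beta)_\ast$, the induced map $f^{-1}(\sigma) \to f^{-1}(\tau)$ should be a homology equivalence. First I would factor $(\alpha,\beta) = (\mathrm{id},\beta)\circ(\alpha,\mathrm{id})$ through $\Delta^{m,q}$ and reduce to the two axis-aligned cases, treated separately.

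\emph{Second-direction case} $(\mathrm{id},\beta)$: a direct computation shows that at each first-direction level $k$, the simplicial set $(f^{-1}(\sigma))_k$ decomposes in the second direction as $\coprod_{\theta \in \Delta^m_k} X_k \times_{Y_k} \Delta^n$, with structure map $\Delta^n \to Y_k$ given by $\theta^\ast\sigma$, and analogously for $\tau$ with $\Delta^q$ in place of $\Delta^n$. Since $f_k$ is a Kan fibration and both $\Delta^n$ and $\Delta^q$ are contractible, each summand map is a weak equivalence of Kan fibrations over contractible bases with a common fiber. Thus the induced map is a level-wise weak equivalence in the first direction, and the diagonal is then a weak equivalence by the standard level-to-diagonal principle for bisimplicial sets \cite[IV,1.9]{gj}, hence a homology equivalence.

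\emph{First-direction case} $(\alpha,\mathrm{id})$: I first use the previous case to restrict along the vertex inclusion $v_0 \colon \Delta^0 \hookrightarrow \Delta^n$, replacing $\sigma$ and $\tau$ by $\sigma_0 := v_0^\ast\sigma \in Y_{m,0}$ and $\tau_0 := v_0^\ast\tau \in Y_{p,0}$, still satisfying $\sigma_0 = \alpha^\ast\tau_0$. Then a direct computation identifies $(f^{-1}(\sigma_0))_k = \coprod_{\theta \in \Delta^m_k} f_k^{-1}(\theta^\ast\sigma_0)$ as a simplicial set in the second direction; this exhibits $f^{-1}(\sigma_0)$ as a bisimplicial set lying over $\Delta^m$ (viewed as constant in the second direction), whose fiber over the $k$-simplex $\theta$ is the vertex-fiber $f_k^{-1}(\theta^\ast\sigma_0)$, and similarly for $\tau_0$ over $\Delta^p$. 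The hypothesis says precisely that the first-direction structure maps act on these fibers by homology equivalences.

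The main obstacle is completing this base case. Choosing any vertex $v \in \Delta^m_0$, the bivertex $w := v^\ast\sigma_0 = (\alpha v)^\ast\tau_0 \in Y_{0,0}$ is common to both sides, so the inclusions $f_0^{-1}(w) \hookrightarrow f^{-1}(\sigma_0)$ and $f_0^{-1}(w) \hookrightarrow f^{-1}(\tau_0)$ have the same source, and it suffices to show each is a homology equivalence on diagonals. The functor $\theta \mapsto f_k^{-1}(\theta^\ast\sigma_0)$ on $Simp(\Delta^m)$ takes values in homology equivalences by hypothesis, so an application of Theorem~\ref{thm:homolB} combined with the contractibility of $\Delta^m$ and Proposition~\ref{lemma:lproper} should exhibit $f^{-1}(\sigma_0)$ as a homology fibration over $\Delta^m$ with trivial fundamental-groupoid action on fibers, hence homology-equivalent to $f_0^{-1}(w)$, and similarly for $\tau_0$. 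Packaging these equivalences with the common source then yields the desired homology equivalence $f^{-1}(\sigma_0) \to f^{-1}(\tau_0)$.
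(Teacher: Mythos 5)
Your overall reduction matches the paper's: first collapse the second-direction coordinate to a vertex using that each $f_n$ is a Kan fibration (your \emph{second-direction case}, which is correct and is essentially the iterated-pullback diagram in the paper), then handle the resulting map $\Delta^{m,0}\to\Delta^{p,0}$ over $Y$ using the hypothesis on the vertex-fibers (your \emph{first-direction case}). The gap you flag in the first-direction case is genuine, and the appeal to Theorem~\ref{thm:homolB} and Proposition~\ref{lemma:lproper} does not go through as stated.

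The problem is that $d f^{-1}(\sigma_0)\to\Delta^m$ is \emph{not} of the form $\hocolim_I F\to NI$ to which Theorem~\ref{thm:homolB} applies, nor is the functor $\theta\mapsto f_k^{-1}(\theta^\ast\sigma_0)$ on $Simp(\Delta^m)$ the pullback functor $(d f^{-1}(\sigma_0)\to\Delta^m)^{-1}$. In first-direction level $k$ one has $f^{-1}(\sigma_0)_k=\coprod_{\gamma\in\Delta^m_k} f_k^{-1}(\gamma^\ast\sigma_0)$, where the summand depends on all of $\gamma$, not just on its initial vertex $\gamma(0)$; and there is no covariant functor $F\colon[m]\to sSet$ whose values are the vertex-fibers $f_0^{-1}(i^\ast\sigma_0)$, because the hypothesis only supplies homology equivalences pointing \emph{out} of the higher-level fibers $f_k^{-1}(-)$ toward the $f_0^{-1}(-)$'s, never between the $f_0^{-1}(-)$'s themselves. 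So there is no natural comparison from $f^{-1}(\sigma_0)$ to a genuine $\coprod_\ast$-construction that would let you transport the conclusion of Theorem~\ref{thm:homolB}. The paper instead finishes with an explicit three-row zigzag at each first-direction level $n$: inserting the constant stages $\coprod_{\gamma\in\Delta^m_n} f_m^{-1}(v)$ and $\coprod_{\gamma\in\Delta^m_n} f_p^{-1}(w)$, noting that all vertical maps are levelwise homology equivalences by the hypothesis (hence homology equivalences on diagonals), and observing that the bottom row $\coprod_{\gamma\in\Delta^m_n} f_p^{-1}(w)\to\coprod_{\delta\in\Delta^p_n} f_p^{-1}(w)$ becomes the weak equivalence $\alpha_\ast\times\mathrm{id}\colon\Delta^m\times f_p^{-1}(w)\to\Delta^p\times f_p^{-1}(w)$ after taking diagonals. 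You should replace the appeal to \ref{thm:homolB}/\ref{lemma:lproper} with such a zigzag.
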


\begin{proof}
  We show that $f$ satisfies condition \ref{lemma:bisimphfibprop:prop2} of lemma \ref{lemma:bisimphfibprop}. Given a bisimplex $\tau \colon \Delta^{p,q} \to Y$ choose a vertex $v$ of $\Delta^q$ and let $(id_{[p]},v)_\ast \colon \Delta^{p,0} \to \Delta^{p,q}$ be the corresponding map of bisimplicial sets. In level $n$ we can form the iterated pullback
\[\xymatrix{\coprod_{\gamma \in \Delta^p_n} f_n^{-1}(v) \ar[r]^-{v_\ast} \ar[d] & \coprod_{\gamma \in \Delta^p_n} f_n^{-1}((\gamma,id_{[q]})^\ast\tau_n) \ar[r] \ar[d] & X_n \ar@{>>}[d]^{f_n} \\
\coprod_{\gamma \in \Delta^p_n} \Delta^0 \ar[r]^\simeq_{\coprod v} & \coprod_{\gamma \in \Delta^p_n} \Delta^q \ar[r]_-{\coprod (\gamma,id_{[q]})^\ast\tau_n} & Y_n }\]
where the map $v_\ast$ is a weak equivalence since $f_n$ is a fibration. This says that the map $f^{-1}((id_{[p]},v)^\ast\tau) \to  f^{-1}(\tau)$ is a levelwise homology equivalence, and hence a homology equivalence by \cite[IV,2.6]{gj}. Therefore it suffices to consider diagrams of the form
\[\xymatrix{\Delta^{m,0} \ar[rd]_v \ar[rr]^{(\alpha,id)_\ast} & & \Delta^{p,0} \ar[dl]^w \\
& Y. & }\] 
In level $n$ the induced map $f^{-1}(v) \to  f^{-1}(w)$ fits as the top row in the diagram
\[\xymatrix{\coprod_{\gamma \in \Delta^m_n} f_n^{-1}((\gamma,id)^\ast v) \ar[r] & \coprod_{\delta \in \Delta^p_n} f_n^{-1}((\delta,id)^\ast w) \\
\coprod_{\gamma \in \Delta^m_n} f_m^{-1}( v) \ar[u] & \\
\coprod_{\gamma \in \Delta^m_n} f_p^{-1}(w) \ar[u] \ar[r] & \coprod_{\delta \in \Delta^p_n} f_p^{-1}(w) \ar[uu]},\]
The vertical maps are homology equivalences by assumption on $f$ and the lower horizontal map becomes the weak equivalence
\[\alpha_\ast \times id \colon \Delta^m \times f_p^{-1}(w) \to \Delta^p \times f_p^{-1}(w) \]
when we take diagonals.
\end{proof}

\begin{corollary}\label{cor:leveltoglobal}
   Let $f \colon X \to Y$ be a map of bisimplicial sets such that for each $n \geq 0 $ the map $f_{n} \colon X_{n} \to Y_{n}$ is a homology fibration and for each $v \in Y_{n,0}$ the induced map on fibers $f_n^{-1}(v) \to f_m^{-1}(\theta^\ast(v))$ is a homology equivalence. Then $f$ is a homology fibration.
\end{corollary}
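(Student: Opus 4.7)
The plan is to reduce to Proposition \ref{prop:leveltoglobal} by factoring $f$ level-wise into a trivial cofibration followed by a Kan fibration. Using a functorial Kan--Quillen factorization in each bi-degree, I get a bisimplicial set $W$ and a factorization
\[X \stackrel{\alpha}{\to} W \stackrel{\bar f}{\to} Y\]
with $\alpha_n$ a trivial cofibration and $\bar f_n$ a Kan fibration for every $n$; the bisimplicial structure on $W$ comes from functoriality. The key intermediate observation is that since $f_n$ is a homology fibration and $f_n = \bar f_n \circ \alpha_n$ is the required trivial-cof/fibration factorization, condition 1 of Lemma \ref{lemma:hfibprop} says precisely that the comparison map $f_n^{-1}(\rho) \to \bar f_n^{-1}(\rho)$ is a homology equivalence for every simplex $\rho$ of $Y_n$.

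First I would apply Proposition \ref{prop:leveltoglobal} to $\bar f$. Its Kan fibration hypothesis is automatic, so the only thing to check is that for $\theta \colon [m] \to [n]$ and $v \in Y_{n,0}$ the map $\bar f_n^{-1}(v) \to \bar f_m^{-1}(\theta^\ast v)$ is a homology equivalence. This follows from the commutative square
\[\xymatrix{f_n^{-1}(v) \ar[r] \ar[d] & f_m^{-1}(\theta^\ast v) \ar[d] \\ \bar f_n^{-1}(v) \ar[r] & \bar f_m^{-1}(\theta^\ast v)}\]
whose top arrow is a homology equivalence by the hypothesis of the corollary and whose vertical arrows are homology equivalences by the key observation. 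Hence Proposition \ref{prop:leveltoglobal} yields that $\bar f$ is a homology fibration of bisimplicial sets.

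To transfer this back to $f$ I verify condition 2 of Lemma \ref{lemma:bisimphfibprop}. For any morphism $\sigma \to \tau$ of bisimplices of $Y$ I get a commutative square
\[\xymatrix{f^{-1}(\sigma) \ar[r] \ar[d] & f^{-1}(\tau) \ar[d] \\ \bar f^{-1}(\sigma) \ar[r] & \bar f^{-1}(\tau)}\]
whose bottom arrow is a homology equivalence since $\bar f$ is a homology fibration. Level-wise, each vertical arrow decomposes as a disjoint union of comparison maps $f_k^{-1}(\rho) \to \bar f_k^{-1}(\rho)$, which are homology equivalences by the key observation; applying \cite[IV,2.6]{gj} promotes this to a homology equivalence of bisimplicial sets, and two-out-of-three finishes the proof. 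The main obstacle is really just the level-wise bookkeeping needed to identify each summand of $f^{-1}(\sigma)_k$ with a pullback to which the homology-cartesian property of $f_k$ applies; beyond that, the argument is formal.
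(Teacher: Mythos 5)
Your proof is correct and is essentially the same argument as the paper's: factor $f$ levelwise into a trivial cofibration followed by a Kan fibration, observe that the levelwise homology-fibration hypothesis makes the comparison maps into the factorization homology equivalences, apply Proposition \ref{prop:leveltoglobal} to the fibration part, and use two-out-of-three to transfer the conclusion back to $f$. The only cosmetic difference is the order in which you verify that $\bar f$ satisfies the hypotheses of Proposition \ref{prop:leveltoglobal} versus the bisimplex-level comparison, which is immaterial.
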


\begin{proof}
  We begin by factoring the map $f \colon X \to Y$ as a levelwise trivial cofibration followed by a levelwise fibration $X \stackrel{g}{\too} W \stackrel{h}{\too} Y$. Given a bisimplex $\sigma \colon \Delta^{p,q} \to Y$ we get a diagram of bisimplicial sets
\[\xymatrix{f^{-1}(\sigma) \ar[r] \ar[d] & X \ar[d]^g \ar@/^2pc/[dd]^{f}\\
h^{-1}(\sigma)\ar[r] \ar[d] & W \ar[d]^h\\
\Delta^{p,q} \ar[r] & Y, }\]
which en level $n$ looks like
\[\xymatrix{\coprod_{\theta \in \Delta^p_n}f_n^{-1}((\theta,id_{[q]})^\ast\sigma_n) \ar[r] \ar[d] & X_n \ar@{>->}[d]_\simeq^{g_n} \ar@/^2pc/[dd]^{f_n}\\
\coprod_{\theta \in \Delta^p_n}h_n^{-1}((\theta,id_{[q]})^\ast\sigma_n)\ar[r] \ar[d] & W_n \ar@{>>}[d]^{h_n}\\
\coprod_{\theta \in \Delta^p_n}\Delta^{q} \ar[r]_-{\coprod (\theta,id_{[q]})^\ast\sigma_n}& Yn .}\]
Since $f_n$ is a homology fibration the upper left vertical map induces a homology equivalence on each summand, and is therefore a homology equivalence. This says that the map $f^{-1}(\sigma) \to h^{-1}(\sigma)$ is a levelwise homology equivalence and hence it is a homology equivalence by \cite[IV,2.6]{gj}. 

Given a vertex $v \in Y_{n,0}$ and a map $\theta \colon [m] \to [n]$ there is a commuting square of fibers
\[\xymatrix{f_n^{-1}(v) \ar[r] \ar[d] & f_m^{-1}(\theta^\ast v) \ar[d] \\
h_n^{-1}(v) \ar[r]  & h_m^{-1}(\theta^\ast v) }\]
The vertical maps are homology equivalences since $f$ is a levelwise homology fibration and the upper horizontal map is a homology equivalence by assumption on $f$. From this we see that lower horizontal map is a homology equivalence which implies that $h$ satisfies the conditions of \ref{prop:leveltoglobal}. A map $\sigma \to \tau$ in $Simp(Y)$ induces a square of pullbacks
\[\xymatrix{f^{-1}(\sigma) \ar[r] \ar[d] & f^{-1}(\tau) \ar[d]\\
 h^{-1}(\sigma) \ar[r]& h^{-1}(\tau)}.\]
The vertical maps are homology equivalences by the arguments above and the lower horizontal map is homology equivalence since $h$ is a homology fibration. It follows that the top horizontal map is a homology equivalence so that $f$ is a homology fibration.
\end{proof}

For a bisimplicial set $X$ we write $Sd_hX$ for the Segal edgewise subdivision of $X$ in the first (horizontal) variable and $Sd_vX$ for the subdivision in the second (vertical) variable. Clearly $Sd_hSd_vX = Sd_vSd_hX$ and $dSd_hSd_vX = Sd(dX)$. 

\begin{lemma}\label{lemma:sdbihfib}
  Let $f \colon X \to Y$ be a map of bisimplicial sets satisfying the conditions of \ref{cor:leveltoglobal}. Then $Sd_h f$ and $Sd_v f$ also satisfy the conditions.
\end{lemma}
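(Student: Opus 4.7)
The plan is to verify the two conditions of Corollary \ref{cor:leveltoglobal} for each of $Sd_h f$ and $Sd_v f$ separately.

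For $Sd_h f$ the verification is almost formal. Since $(Sd_h X)_n = X_{2n+1}$ and $(Sd_h f)_n = f_{2n+1}$, condition 1 is immediate from the hypothesis on $f$. For condition 2, the horizontal structure map on $Sd_h Y$ induced by $\theta \colon [m] \to [n]$ is $(\theta^{op} \ast \theta)^\ast \colon Y_{2n+1} \to Y_{2m+1}$, so the required fiber homology equivalence is condition 2 for $f$ applied to the doubled morphism $\theta^{op} \ast \theta \colon [2m+1] \to [2n+1]$.

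For $Sd_v f$ I have $(Sd_v X)_n = Sd(X_n)$ and hence $(Sd_v f)_n = Sd(f_n)$. Condition 1 then follows from Lemma \ref{lemma:hfibsd} applied levelwise. The real work is condition 2: given $v \in (Sd_v Y)_{n,0} = (Y_n)_1$ corresponding to a $1$-simplex $\bar v \colon \Delta^1 \to Y_n$, and a morphism $\theta \colon [m] \to [n]$, I want the induced map $Sd(f_n)^{-1}(v) \to Sd(f_m)^{-1}(\theta^\ast v)$ to be a homology equivalence. My plan is to factor this comparison through the ``total'' spaces $Sd(f_n^{-1}(\bar v))$ and $Sd(f_m^{-1}(\overline{\theta^\ast v}))$, using that $Sd$ preserves pullbacks, so that $Sd(f_n)^{-1}(v) \cong Sd(f_n^{-1}(\bar v)) \times_{Sd\Delta^1} \Delta^0$ with $\Delta^0 \to Sd\Delta^1$ the middle-vertex inclusion, and similarly for $f_m$.

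For the vertical sides of this factorization, $f_n^{-1}(\bar v) \to \Delta^1$ is a homology fibration by Lemma \ref{lemma:basechange}, hence $Sd(f_n^{-1}(\bar v)) \to Sd\Delta^1$ is one too by Lemma \ref{lemma:hfibsd}; since $|Sd\Delta^1| \cong |\Delta^1|$ is contractible, Theorem \ref{hfibeq} together with the standard fact that a homology fibration over a contractible base induces a homology equivalence on any strict fiber shows that $Sd(f_n)^{-1}(v) \hookrightarrow Sd(f_n^{-1}(\bar v))$ is a homology equivalence, and the same for $f_m$. For the horizontal side, the map $f_n^{-1}(\bar v) \to f_m^{-1}(\overline{\theta^\ast v})$ is a homology equivalence by a $2$-out-of-$3$ argument: the inclusions $f_n^{-1}(v_i) \hookrightarrow f_n^{-1}(\bar v)$ for the endpoints $v_0, v_1$ of $v$ are homology equivalences by condition 2 of Lemma \ref{lemma:hfibprop} applied to the homology fibration $f_n$, similarly for $f_m$, while $f_n^{-1}(v_i) \to f_m^{-1}(\theta^\ast v_i)$ are homology equivalences by the hypothesis on $f$. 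Applying $Sd$, which preserves homology equivalences via the natural homeomorphism $|SdX| \cong |X|$, and assembling everything with one last $2$-out-of-$3$ concludes the argument.

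The main obstacle is this bridging step between the vertex-fiber homology equivalences, which is what the hypothesis on $f$ directly provides, and the edge-fiber version needed to apply the ``contractible base'' trick inside $Sd_v$. Once the auxiliary observation that $f_n^{-1}(\bar v) \to \Delta^1$ is itself a homology fibration over a contractible base is in place, together with the fact that $Sd$ preserves homology equivalences, the rest reduces to diagram chasing through pullbacks.
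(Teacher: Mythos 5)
Your argument is correct and follows essentially the same route as the paper's: the $Sd_h f$ case is handled by the same formal observation that $(Sd_h f)_n = f_{2n+1}$ and vertical structure maps are of the form $(\theta \sqcup \theta^{op})^\ast$, and for $Sd_v f$ the key step in both is to reduce the fiber over an arbitrary vertex $v \in Y_{n,1}$ of $SdY_n$ to the fiber over a vertex coming from $Y_{n,0}$ using that $Sd(f_n)$ is a homology fibration, then invoke the hypothesis on $f$. The paper states this reduction in one line (``connected to such a vertex by an edge'') whereas you make it explicit by factoring through the pullback over $Sd\Delta^1$ and using contractibility of the base — a welcome expansion of a step the paper leaves to the reader, but not a different strategy.
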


\begin{proof}
  We treat $Sd_h f$ first. In level $n$ the map $Sd_h f$ is just the map $f \colon X_{2n+1} \to Y_{2n+1}$ which is a homology fibration by assumption. Assume given a vertex $v \in (Sd_hY)_{n,0} = Y_{2n+1,0}$ and a simplicial structure map $\theta \colon [m] \to [n]$. The induced map $ \theta^\ast \colon (SdY)_n \to (SdY)_m$ is the map simplicial structure map $(\theta \sqcup \theta^{op})^\ast \colon Y_{2n+1} \to Y_{2m+1}$ so the map on fibers is a homology equivalence by assumption. 

Now for the map $Sd_vf$. For $n \geq 0$ the map $(Sd_vf)_n$ is the subdivision $Sd(f_n)$ of the map $f_n \colon X_n \to Y_n$, so by \ref{lemma:hfibsd} it is a homology fibration. A vertex $v \in (SdY_n)_0=Y_{n,1}$ need not come from a vertex in $Y_{n,0}$, but it can be connected to such a vertex by an edge. Since $Sd(f_n)$ is a homology fibration it then follows the fiber over $v$ is equivalent to the fiber over a vertex in $Y_n$. This implies that for any simplicial structure map $\theta \colon [m] \to [n]$ the fiber over $v$ maps by a homology equivalence to the fiber over $(Sd\theta^\ast)(v)$.
\end{proof}

\section{Simplicial monoids with anti-involution}\label{section:monoids}

\begin{definition}
  An anti-involution on a monoid $M$ is a function $\alpha \colon M \to M$ such that
  \begin{enumerate}
  \item For all $a,b \in M, \, \alpha(a \cdot b ) = \alpha(b) \cdot \alpha (b)$.
  \item For all $a \in M, \, \alpha(\alpha(m)) = m$.
  \end{enumerate}

A simplicial monoid with anti-involution is a simplicial monoid $M$ with a self-map $\alpha \colon M \to M$ of simplicial sets, which is an anti-involution in each simplicial level. 
\end{definition}
We will often suppress the maps $\alpha$ in the exposition and simply write $\bar{a}$ for $\alpha(a)$. Given a monoid $M$ we can form the bar construction $BM$ which is a simplicial set. If $M$ has the extra structure of an anti-involution we get extra structure on the bar construction as well. The system of maps 
\[\{w_i \colon B_iM \to B_iM \}\]
given in level $p$ by
\[w_p(m_1,m_2, \ldots , m_{p}) = (\overline{m_p},   \ldots,\overline{m_2}, \overline{m_{1}}) \]
together with the simplicial structure maps of $B M$ form a real simplicial set (see the appendix \ref{appendix})
\[B^{1,1} M \colon (\Delta R)^{op} \to Set.\]
Similarly, for a simplicial monoid $M$ with anti-involution we get a functor
 \[B^{1,1} M \colon (\Delta R)^{op} \to sSet.\]
We can extend $M$ to a functor from $ (\Delta R)^{op} \times \Delta ^{op}$ to sets by letting the $w_i$-s act by the involution and the other structure maps in the first factor act trivially. Similarly, we can extend the representable functor $\Delta R^1$ to a functor from $ (\Delta R)^{op} \times \Delta ^{op}$ to sets which is constant in the second variable. Denote the product of these $ (\Delta R)^{op} \times \Delta ^{op}$-sets by $\Delta R^1 \boxtimes M$.

Since the $1$-simplices of  $B^{1,1} M$ are $M$ there is an induced map
\[ \Delta R^1 \boxtimes M \to B^{1,1} M.\]
 As a consequence, we get a $C_2$-equivariant map on realizations 
\[  |\Delta^1| \times |M|   \to |B^{1,1}M| \]
which sends the $\g$-subspace $|\Delta^1|\times \{e\} \cup \{0,1\}\times|M| $ to the basepoint. Therefore, we get an induced $\g$-map $S^{1,1} \wedge |M| \to  |B^{1,1}M|$ with adjoint map
\[\eta_M \colon |M| \to \Omega^{1,1}|B^{1,1}M|. \]
Non-equivariantly, the topological monoid $|M|$ acts by left multiplication on itself and acts homotopy associatively on the loop space by $m \cdot \gamma = \eta(m) \ast \gamma$, where $\ast$ means concatenation of loops. Up to homotopy $\eta$ commutes with the $\g$-actions. We are interested in the properties of the map induced by $\eta$ on fixed points
\[\eta_M^\g \colon |M|^\g \to (\Omega^{1,1}|B^{1,1}M|)^\g. \]
Here, there is a left action of $|M|$ on the fixed points, given by $(m,n) \mapsto mn\bar{m}$. On the fixed points of the loop space $|M|$ acts by $m \cdot \gamma = \eta(m)\ast \gamma \ast \eta(\bar{m})$ and also these actions commute with $\eta_M^\g$ up to homotopy.

\begin{definition}
  Let $N$ be a commutative monoid. An element $s \in N$ is called a cofinal generator if for any $x \in N$ there is an $n \geq 0$ and an element $y \in N$ such that $xy=s^n$. A vertex $t$ in a simplicial monoid $M$ with $\pi_0(M)$ commutative is called a homotopy cofinal generator if its class $[t] \in \pi_0(M)$ is a cofinal generator.
\end{definition}

\begin{example}\label{example:hocof}
  Let $M$ be a simplicial monoid such that the monoid $\pi_0(M)$ is finitely generated and commutative. Pick vertices $t_1, \ldots , t_n \in M_0$ whose path components $[t_1], \ldots , [t_n]$ generate $ \pi_0(M)$. Then the vertex $t = t_1 t_2 \cdots    t_n$ is a homotopy cofinal generator of $M$.
\end{example}

From now on let $M$ denote a simplicial monoid with $\pi_0(M)$ commutative and let $t$ be a homotopy cofinal generator of $M$. For a left $M$-module $X$ we set
\[
X_\infty = \hocolim (X \stackrel{t \cdot}{\too}  X \stackrel{t \cdot}{\too} X \stackrel{t \cdot}{\too} \cdots ).
\]
In particular, we have
\[
M_\infty = \hocolim (M \stackrel{t \cdot}{\too}  M \stackrel{t \cdot}{\too} M \stackrel{t \cdot}{\too} \cdots ).
\]
Multiplication from the left by $t$ commutes with the ordinary right action of $M$ on itself, and there is an induced right action of $M$ on $M_\infty $. The homology of $M$ is a graded ring and the homology of $M_\infty$ is a right module over $H_\ast(M)$. There is an isomorphism of right $H_\ast(M)$-modules
\[
H_\ast (M_\infty) \cong \colim (H_\ast(M) \stackrel{[t] \cdot}{\too}  H_\ast(M) \stackrel{[t] \cdot}{\too} H_\ast(M) \stackrel{[t] \cdot}{\too} \cdots ) 
\]

\begin{lemma}\label{lemma:mincl}The map $M \to M_\infty$ including $M$ at the start of the diagram induces a map on homology 
\[H_\ast(M) \to H_\ast (M_\infty)\]
that sends $\pi_0(M)$ to invertible elements. The induced map
\[H_\ast(M)[\pi_0(M)^{-1}] \to H_\ast(M_\infty)\]
is an isomorphism of right $H_\ast(M)$-modules.
\end{lemma}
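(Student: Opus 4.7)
The plan is essentially algebraic: the geometric content is already packaged into the isomorphism of right $H_\ast(M)$-modules
\[H_\ast(M_\infty) \cong \colim\bigl(H_\ast(M) \stackrel{[t]\cdot}{\too} H_\ast(M) \stackrel{[t]\cdot}{\too} H_\ast(M) \stackrel{[t]\cdot}{\too}\cdots\bigr)\]
displayed just above the lemma (homology commutes with sequential homotopy colimits). Under this identification the map induced by $M\hookrightarrow M_\infty$ is the canonical map into the colimit at stage $0$, so I only have to analyse the right-module structure of the colimit.

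First I would note that right multiplication by $[t]$ is an isomorphism on $H_\ast(M_\infty)$: it is the shift map of a sequential colimit along itself, hence the identity. Next, for arbitrary $[s]\in\pi_0(M)$, use the homotopy cofinal generator property to pick $[y]\in\pi_0(M)$ and $n\geq 0$ with $[s][y]=[t]^n$ in $\pi_0(M)$. Since $\pi_0(M)$ is central in $H_\ast(M)$ (the standing hypothesis), right multiplications by $[s]$ and $[y]$ commute, and their composite equals right multiplication by $[t]^n$, which is invertible by the previous sentence. It follows that right multiplication by $[s]$ is both injective (pre-composable with $\mu_{[y]}$ to an iso) and surjective (post-composable to an iso), hence invertible. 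This proves the first claim.

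For the second claim I would appeal to the universal property of Ore localization at the central multiplicative subset $\pi_0(M)\subset H_\ast(M)$ to obtain a factorization
\[\phi\colon H_\ast(M)[\pi_0(M)^{-1}] \too H_\ast(M_\infty).\]
To see $\phi$ is an isomorphism, I would reduce to a single element: cofinality of $[t]$ means that every element of $\pi_0(M)$ divides a power of $[t]$, so $H_\ast(M)[\pi_0(M)^{-1}]=H_\ast(M)[[t]^{-1}]$. The latter is by construction the filtered colimit of the system $H_\ast(M)\stackrel{[t]\cdot}{\to}H_\ast(M)\stackrel{[t]\cdot}{\to}\cdots$, which is $H_\ast(M_\infty)$ by the opening observation. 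Tracing through, $\phi$ is the identity under these identifications.

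The only step requiring any care is the bookkeeping for the right-module structure when invoking centrality: one needs the fact that $\pi_0(M)$ is central in $H_\ast(M)$ (not merely commutative in itself) in order for the localization $H_\ast(M)[\pi_0(M)^{-1}]$ to exist as a right module with the usual universal property and for $\mu_{[s]}\mu_{[y]}=\mu_{[s][y]}$. Once that is granted, the proof reduces to the elementary fact that a sequential colimit inverts its transition map.
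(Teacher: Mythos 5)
Your proof follows essentially the same route as the paper's: identify $H_\ast(M_\infty)$ with the sequential colimit of $H_\ast(M) \stackrel{[t]\cdot}{\too} H_\ast(M) \stackrel{[t]\cdot}{\too}\cdots$, invoke centrality of $\pi_0(M)$ in $H_\ast(M)$ to identify that colimit with $H_\ast(M)[t^{-1}]$, and then use cofinality of $[t]$ to conclude $H_\ast(M)[t^{-1}]\cong H_\ast(M)[\pi_0(M)^{-1}]$. One small slip worth flagging: right multiplication by $[t]$ on the colimit is not the shift map and is not the identity; writing $\iota_n$ for the canonical map from the $n$-th stage, centrality gives $\iota_n(x)\cdot[t]=\iota_n([t]x)=\iota_{n-1}(x)$ for $n\geq 1$, so $\cdot[t]$ is inverse to the shift $\iota_n(x)\mapsto\iota_{n+1}(x)$ and hence an isomorphism as claimed --- but note that centrality of $\pi_0(M)$ is already needed at this first step, not only later when commuting $\mu_{[s]}$ and $\mu_{[y]}$.
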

\begin{proof}
  Since $\pi_0(M)$ is central in $H_\ast(M)$ there is an isomorphism
\[
\colim (H_\ast(M) \stackrel{[t] \cdot}{\too}  H_\ast(M) \stackrel{[t] \cdot}{\too} H_\ast(M) \stackrel{[t] \cdot}{\too} \cdots ) \cong H_\ast(M)[t^{-1}] 
\]
and since $[t]$ is a cofinal generator of $\pi_0(M)$ there is an isomorphism
\[ H_\ast(M)[t^{-1}] \cong H_\ast(M)[\pi_0(M)^{-1}].\]
\end{proof}

It follows that the vertices $M_0$ of $M$ act on $M_\infty$ by homology equivalences. The following can also be found in e.g., \cite[IV, 5.15]{gj}.

\begin{lemma}\label{lemma:condpf}
  Let $X$ be a right $M$-space on which $M_0$ acts by homology equivalences. Then the canonical map $p \colon B(X , M, \ast) \to BM$ satisfies the conditions of corollary \ref{cor:leveltoglobal}. In particular, it is a homology fibration.
\end{lemma}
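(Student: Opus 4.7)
The plan is to verify the two hypotheses of corollary \ref{cor:leveltoglobal} directly. View $BM$ and $B(X,M,\ast)$ as bisimplicial sets whose horizontal level $n$ are the simplicial sets $(BM)_n = M^n$ and $B(X,M,\ast)_n = X \times M^n$ respectively, with $p_n$ the projection onto the $M^n$-factor.

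For the first hypothesis I would note that each $p_n$ is a projection from a product of simplicial sets, so for every simplex $\sigma \colon \Delta^j \to M^n$ the pullback $p_n^{-1}(\sigma)$ is $X \times \Delta^j$, and for every morphism $\alpha \colon \sigma \to \tau$ in $Simp(M^n)$ the induced map $X \times \Delta^j \to X \times \Delta^k$ is $\mathrm{id}_X \times \alpha_\ast$, a homotopy equivalence since $\Delta^j$ and $\Delta^k$ are contractible. Condition (2) of lemma \ref{lemma:hfibprop} then gives that $p_n$ is a homology fibration.

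For the second hypothesis I would observe that over a vertex $v = (v_1, \ldots, v_n) \in M_0^n$ the fiber $p_n^{-1}(v)$ is canonically $X$, and trace a structure map $\theta \colon [m] \to [n]$ through the two-sided bar construction formula. The only face maps that act nontrivially on the $X$-factor are the iterated $d_0$'s, which send $(x, v_1, \ldots, v_n)$ to $(xv_1\cdots v_k, v_{k+1}, \ldots, v_n)$, so the induced map on fibers
\[p_n^{-1}(v) \too p_m^{-1}(\theta^\ast v)\]
is right multiplication on $X$ by the element $v_1 v_2 \cdots v_{\theta(0)} \in M_0$. As this is a finite composite of maps assumed to be homology equivalences, it is itself a homology equivalence, and corollary \ref{cor:leveltoglobal} applies.

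The only delicate point is the bookkeeping: identifying exactly which word in the $v_i$'s acts on the $X$-factor under an arbitrary $\theta$. Once this is made explicit, everything rests on the standing hypothesis that vertices of $M$ act on $X$ by homology equivalences, and the conclusion follows.
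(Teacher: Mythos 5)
Your proof is correct and follows essentially the same two-step route as the paper: show each $p_n$ is a homology fibration via condition (2) of lemma \ref{lemma:hfibprop} on the product projection, then show the maps between fibers are (right) multiplication by a vertex, hence homology equivalences. The only cosmetic difference is that the paper factors $\theta^\ast$ into elementary face and degeneracy maps and handles each case, whereas you write the formula for a general $\theta$ and read off the word $v_1\cdots v_{\theta(0)}$ directly; both amount to the same observation.
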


\begin{proof}
  First we verify that for each $n \geq 0$ the projection map 
\[p_n \colon X \times M^{\times n} \to M^{\times n} \]
is a homology fibration. Given simplices $\sigma \colon \Delta^p \to M^{\times n}$ and $\tau \colon \Delta^q \to M^{\times n}$ and a map $\alpha \colon \sigma \to \tau$ there is an induced square
\[\xymatrix{p^{-1}(\sigma) \ar[r]^{p^{-1}(\alpha)} \ar[d]_\cong &  p^{-1}(\tau)\ar[d]^\cong \\
\Delta^p \times X \ar[r]_-{\alpha_\ast \times id_X}^-\simeq & \Delta^q \times X,
 }\]
so $p^{-1}(\alpha)$ is a weak equivalence and $p_n$ is a homology fibration.

Next, let $v \in M_{0}^{\times n}$ be a vertex and let $\theta \colon [m] \to [n]$ be a map in $\Delta$. Note that the fiber over any vertex is isomorphic to $X$. We must show that the map on fibers
\[p_n^{-1}(v) \to p_{n-1}^{-1}(\theta^\ast(v)) \]
is a homology equivalence. Since $\theta^\ast$ can be factored into face and degeneracy maps we reduce these cases. If $\theta^\ast = s_i$ or $\theta^\ast = d_j$ with $j \neq 0$ then the map $p_n^{-1}(v) \to p_{n-1}^{-1}(\theta^\ast(v)) $ is an isomorphism. Otherwise, if $\theta^\ast=d_0$, then the induced map is multiplication by an element in $M_0$ and hence a homology equivalence.
\end{proof}

\begin{lemma}\label{lemma:hocolimB}
  Let $G \colon I \to sSet$ be a functor taking values in right $M$-modules and $M$-module maps and let $X$ be a left $M$-module. Then there is a natural isomorphism of simplicial sets
\[ dB(\hocolim G, M, X) \cong \hocolim d B(G,M,X). \]
\end{lemma}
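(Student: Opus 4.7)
The plan is to unfold both sides of the claimed isomorphism in each simplicial degree and observe that they agree on the nose, reducing the lemma to bookkeeping with the iterated diagonal of a tri-simplicial set.

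First I would recall that for simplicial sets $Y, M, X$ the two-sided bar construction $B(Y,M,X)$ is the bisimplicial set with $(p,q)$-bisimplices $Y_q \times M_q^p \times X_q$, whose diagonal has $n$-simplices $Y_n \times M_n^n \times X_n$. Using the standard model for homotopy colimits as the diagonal of $\coprod_\ast G$, one has $(\hocolim G)_n = \coprod_{i_0 \to \cdots \to i_n} G(i_0)_n$, and therefore
\[ dB(\hocolim G, M, X)_n \;=\; \coprod_{i_0 \to \cdots \to i_n} G(i_0)_n \times M_n^n \times X_n. \]

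For the other side, $dB(G(-),M,X) \colon I \to sSet$ has value at $i$ the simplicial set with $n$-simplices $G(i)_n \times M_n^n \times X_n$, so applying the same hocolim model yields
\[ \bigl(\hocolim_I dB(G,M,X)\bigr)_n \;=\; \coprod_{i_0 \to \cdots \to i_n} G(i_0)_n \times M_n^n \times X_n, \]
which is literally the same set. I would declare the identity on $n$-simplices to be the candidate natural isomorphism.

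The remaining task, and the only place where anything can fail, is to verify compatibility with all simplicial operators. The $n$-simplices of each side are really triple diagonals: over the $I$-chain index, the bar index $p$, and the internal simplicial index $q$ of $G(i_0)$, $M$, and $X$. Face and degeneracy maps act independently in each of these three coordinates, and the two expressions package these three actions in the same way. The one operator worth writing out is the face $d_0$ in the $I$-chain direction, which applies $G(i_0 \to i_1)$ to the $G(i_0)_n$ factor before dropping $i_0$ from the chain; since $G$ takes values in right $M$-module maps, $G(i_0 \to i_1)$ commutes with the product by $M_n^n \times X_n$ on both sides, and similarly for the $d_0$ in the bar direction. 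Naturality in $(G,M,X)$ is automatic since the identification is the identity on underlying sets. I do not expect any genuine obstacle; the proof is purely formal manipulation of indices in a tri-simplicial diagonal.
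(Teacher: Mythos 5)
Your proposal is correct and follows essentially the same route as the paper: the paper's proof consists of one sentence observing that both sides are iterated diagonals of the trisimplicial set $B(\coprod_\ast G, M, X)$ whose $(p,q,r)$-simplices are $\bigl(\coprod_{\sigma \in N_r(I)} G(\sigma(0))_q\bigr) \times M_q^{\times p} \times X_q$, which is exactly the observation you arrive at in your final paragraph. Your version simply unfolds this in each simplicial degree and gives a slightly more verbose (and slightly more hand-wavy) justification of the compatibility of structure maps than the paper's implicit appeal to the order-independence of iterated diagonals, but it is the same argument.
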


\begin{proof}
Both simplicial sets are obtained by taking iterated diagonals of the trisimplicial set $B(\coprod_\ast G,M,X)$ given by
\[[p],[q],[r] \mapsto \left( \coprod_{\sigma \in N_r(I)}G\left(\sigma\left(0\right)\right)_q\right) \times M^{\times^p}_q \times X_q.\] 
\end{proof}

\begin{corollary}\label{cor:infty}For any left $M$-module $X$ there is an isomorphism 
\[dB(M_\infty,M,X) \cong (dB(M,M,X))_\infty.\]
\end{corollary}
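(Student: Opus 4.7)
The plan is to apply Lemma \ref{lemma:hocolimB} directly to the diagram that defines $M_\infty$. Let $G \colon \mathbb{N} \to sSet$ denote the constant-on-objects functor with $G(n) = M$ and with every arrow $n \to n+1$ sent to the map $t \cdot \colon M \to M$ given by left multiplication by the homotopy cofinal generator $t$. By definition $\hocolim G = M_\infty$.

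First I would check the hypotheses of Lemma \ref{lemma:hocolimB}. Each $G(n) = M$ is a right $M$-module via right multiplication, and the transition map $t \cdot \colon M \to M$ is a map of right $M$-modules, because associativity in $M$ says $t \cdot (m \cdot m') = (t \cdot m) \cdot m'$ for all $m, m' \in M$. Hence $G$ takes values in right $M$-modules and $M$-module maps, exactly as required.

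Applying Lemma \ref{lemma:hocolimB} to $G$ and any left $M$-module $X$ gives a natural isomorphism
\[ dB(\hocolim G, M, X) \cong \hocolim d B(G, M, X). \]
The left hand side is $dB(M_\infty, M, X)$. On the right hand side, for each $n$ we have $dB(G(n), M, X) = dB(M, M, X)$, and the transition map induced by $t \cdot \colon G(n) \to G(n+1)$ is $B(t \cdot, \mathrm{id}, \mathrm{id})$, which on $p$-simplices is $(m, m_1, \ldots, m_p, x) \mapsto (t \cdot m, m_1, \ldots, m_p, x)$. This is precisely left multiplication by $t$ on $dB(M, M, X)$, using the left $M$-module structure coming from the first factor. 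Therefore the right hand side is
\[ \hocolim \bigl( dB(M, M, X) \stackrel{t \cdot}{\too} dB(M, M, X) \stackrel{t \cdot}{\too} \cdots \bigr) = (dB(M, M, X))_\infty. \]

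There is no real obstacle here; the argument is simply a matter of identifying the diagram defining $M_\infty$ as an instance of the hypothesis of Lemma \ref{lemma:hocolimB} and tracking the induced maps on the bar construction. The only point requiring verification is that left multiplication by $t$ is compatible with the right $M$-action, which is immediate from associativity.
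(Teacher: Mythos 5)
Your argument is correct and is precisely the intended proof: the paper leaves the corollary without explicit justification because it is indeed the direct application of Lemma \ref{lemma:hocolimB} to the diagram $G \colon \underline{\mathbb{N}} \to sSet$ defining $M_\infty$, and you have filled in exactly the verifications one would want (that the transition maps $t\cdot$ are right $M$-module maps by associativity, and that the induced maps on $dB(M,M,X)$ realize the left $M$-action on the first bar factor).
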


\begin{theorem}[Group completion] (cp. \cite{mcduff-segal},\cite{filtr}\cite{grcomprev}\label{theorem:grcomp})
Let $M$ be a simplicial monoid such that $\pi_0 M$ is in the center of $H_\ast(M)$.   Then there is an isomorphism
\[ H_\ast(M)[\pi_0(M)^{-1}] \iso H_\ast(\Omega|BM|). \]
\end{theorem}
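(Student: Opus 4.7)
The plan is to adapt the McDuff--Segal strategy using the homology fibration machinery of Section 2. In the setup preceding Lemma \ref{lemma:mincl} (with a homotopy cofinal generator $t \in M_0$ fixed), consider the bisimplicial bar construction $B(M_\infty, M, \ast)$ together with the canonical projection
\[ p \colon B(M_\infty, M, \ast) \to BM. \]

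The argument has three steps. First, I identify the fiber of $p$ at the basepoint of $BM$ as $M_\infty$: in bidegree $(n,q)$ the map $p$ is projection $(M_\infty)_q \times M_q^{\times n} \to M_q^{\times n}$, so pulling back along the unique bisimplex $\Delta^{0,0} \to BM$ yields a bisimplicial set which is constant in the $n$-direction with value $M_\infty$, whose diagonal is therefore $M_\infty$. Second, I show that $|B(M_\infty, M, \ast)|$ is contractible. By Corollary \ref{cor:infty} one has $dB(M_\infty, M, \ast) \cong dB(M, M, \ast)_\infty$, and since $B(M, M, \ast) = EM$ is contractible and a sequential homotopy colimit of contractible spaces is contractible (the mapping telescope deformation retracts to any term), the total space is contractible. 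Third, I verify that $p$ is a homology fibration using Lemma \ref{lemma:condpf}: its hypothesis requires $M_0$ to act on $M_\infty$ by homology equivalences, which follows from Lemma \ref{lemma:mincl} because $\pi_0(M)$ is central in $H_\ast(M)$, so every vertex acts invertibly on $H_\ast(M_\infty) \cong H_\ast(M)[\pi_0(M)^{-1}]$.

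Combining these ingredients: by the homology fibration property, the inclusion of the fiber $M_\infty$ into the homotopy fiber of $|p|$ is a homology equivalence; and since the total space is contractible, this homotopy fiber is weakly equivalent to $\Omega|BM|$. Therefore $H_\ast(M_\infty) \cong H_\ast(\Omega|BM|)$, and composing with the isomorphism $H_\ast(M)[\pi_0(M)^{-1}] \cong H_\ast(M_\infty)$ of Lemma \ref{lemma:mincl} yields the theorem. The main obstacle I expect is the contractibility step: verifying that the $t$-multiplication structure maps in the sequence $EM \to EM \to \cdots$ still yield a contractible homotopy colimit requires unwinding Corollary \ref{cor:infty} and invoking a telescope argument. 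The homology fibration property and the identification of the homotopy fiber with $\Omega|BM|$ via contractibility of the total space are then routine consequences of Section 2.
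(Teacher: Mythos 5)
Your argument correctly reproduces the paper's proof, but only for the case when $M$ admits a homotopy cofinal generator $t \in M_0$. The three ingredients you assemble --- fiber identification, contractibility of $dB(M_\infty,M,\ast)$ via Corollary \ref{cor:infty}, and the homology fibration property via Lemma \ref{lemma:condpf} --- are exactly the ones the paper uses, combined in the same way. (One small caveat: the claim that the mapping telescope ``deformation retracts to any term'' isn't quite right in general; the cleaner justification is that a sequential homotopy colimit of contractible CW complexes has vanishing homotopy groups, since homotopy groups commute with filtered colimits.)

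However, the theorem as stated makes no finiteness assumption on $\pi_0(M)$, and a homotopy cofinal generator need not exist if $\pi_0(M)$ is not finitely generated. The paper closes this gap with a second step that you omit: it writes $M$ as a filtered colimit $\colim_{M_i \in F(M)} M_i$ over the poset $F(M)$ of submonoids whose $\pi_0$ is finitely generated, observes that each such $M_i$ does have a homotopy cofinal generator by Example \ref{example:hocof}, and then uses that $|-|$, $B$, $\Omega$, $H_\ast(-)$, and localization all commute with filtered colimits to pass to the general case. Without this reduction your proof does not establish the theorem in the stated generality.
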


\begin{proof}
Assume first that $M$ has a homotopy cofinal generator $t$. Then, by lemma \ref{lemma:condpf} the map $B(M_\infty,M,\ast) \to BM$ is a homology fibration with fiber $M_\infty$. Taking $X=\ast$ in \ref{cor:infty} shows that the simplicial set $dB(M_\infty,M,\ast)$ is a homotopy colimit of contractible spaces and hence is contractible. From this we get a homology equivalence $|M_\infty| \to \Omega |BM|$ and we conclude by lemma \ref{lemma:mincl}.

For general $M$ we let $F(M)$ denote the poset of submonoids of $M$ with finitely generated monoid of path components. Then there is an isomorphism of simplicial monoids $\colim_{M_i \in F(M)} M_i \cong M$ and the colimit is filtering. The functors $|-|$, $B$, $\Omega$ and $H_\ast(-)$ all commute with filtering colimits so the result now follows since each $M_i \in F(M)$ has a homotopy cofinal generator by \ref{example:hocof}.  
\end{proof}

If we are willing to work a little more we can also see that this isomorphism is $H_\ast(M)$-linear and agrees in homology with the map induced by
\[\eta_M \colon |M| \to \Omega|BM|. \] 
A similar proof  will be given in detail below for the map
\[\eta^\g_M \colon |M|^\g \to (\Omega^{1,1}|B^{1,1}M|)^\g. \]

With theorem \ref{theorem:grcomp} as our starting point we will now proceed to analyse the map $\eta^\g_M$. It becomes easier to work with the anti-involution when we take the Segal subdivision in the horizontal (i.e., bar construction) direction of $B^{1,1}M$. The output is the bisimplicial set $Sd_h B^{1,1}M$ which has a simplicial action of $C_2$ and whose fixed points we will now describe. An element in level $(p,q)$ of $Sd_h B^{1,1}M$ is a tuple
\[(m_1, \ldots , m_{2p+1}) \in M_q^{\times^{2p+1}},\]
and the action of the non-trivial element in $C_2$ is
\[(m_1, \ldots ,m_{p}, m_{p+1},m_{p+2}, \ldots , m_{2p+1}) \mapsto (\overline{m_{2p+1}}, \ldots ,\overline{m_{p+2}},\overline{m_{p+1}},\overline{m_{p}}, \ldots ,\overline{m_1}). \]
The fixed points of this action are of the form
\[(m_1, \ldots ,m_{p}, m_{p+1},\overline{m_{p}}, \ldots ,\overline{m_1}), {\text { where }} m_{p+1}= \overline{m_{p+1}}.\]
Here, the last $p$ factors are redundant and projection on the first $p+1$ factors gives a bijection 
\[b_{p,q} \colon (M_q^{\times^{2p+1}})^\g \iso M_q^{\times^{p}} \times M_q^\g. \]
The monoid $M_q$ acts on $ M_q^\g$ on the left by $(m,n) \mapsto m \cdot n  \cdot  \overline{m}$. Both this action and the description of the fixed points are compatible with the simplicial structure maps of $M$. Combining this with the fact that
\[d_p(m_1, \ldots ,m_{p}, m_{p+1},\overline{m_{p}}, \ldots ,\overline{m_1}) = (m_1, \ldots ,m_{p} \cdot m_{p+1} \cdot \overline{m_{p}}, \ldots ,\overline{m_1}), \]
we get the following:
\begin{lemma}
  Let $M$ be a simplicial monoid with anti-involution. Then the maps $b_{p,q}$ determine natural isomorphism of bisimplicial sets
\[ b \colon (Sd_h B^{1,1}M)^\g \iso B(\ast,M,M^\g).\]
\end{lemma}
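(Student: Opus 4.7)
The plan is to verify that the already-established family of bijections $\{b_{p,q}\}$ is natural in both simplicial directions; this will immediately give that $b$ is an isomorphism of bisimplicial sets, since each component is bijective.

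Naturality in $q$ I would dispatch first, as it is essentially formal. Every vertical structure map on both $(Sd_h B^{1,1}M)^\g$ and $B(\ast,M,M^\g)$ is the componentwise action of a simplicial structure map of the simplicial set $M$ on the tuple entries, and projection onto the first $p+1$ factors commutes with componentwise action.

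Naturality in $p$ is the main content. Recall that a simplicial operator $\theta \colon [m] \to [n]$ acts on $Sd_h X$ as $\theta \sqcup \theta^{op} \colon [2m+1] \to [2n+1]$ acting on $X$, where we identify $[m] \sqcup [m]^{op}$ with $[2m+1]$ by placing $[m]^{op}$ in reverse order after $[m]$. For the face $\delta^i \colon [p-1] \to [p]$, the combined map skips positions $i$ and $2p+1-i$ in $[2p+1]$, and its contravariant effect on $M^{2p+1}$ is the composite bar-face $d_i \circ d_{2p+1-i}$. The middle case $i = p$ is already computed in the excerpt and yields $(m_1,\ldots,m_p,x) \mapsto (m_1,\ldots,m_{p-1},m_p x \overline{m_p})$, matching the face $d_p$ of $B(\ast,M,M^\g)$ with the twisted action $m \cdot x = m x \overline{m}$. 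For $0 < i < p$, the two bar-faces combine adjacent entries at mirror positions; the anti-involution identity $\overline{m_i m_{i+1}} = \overline{m_{i+1}} \cdot \overline{m_i}$ shows the output is again a fixed-point tuple, whose first $p$ entries correspond under $b_{p-1,q}$ to the standard face $d_i$ of the two-sided bar (which does not touch the $M^\g$ factor). For $i = 0$, both bar-faces simply drop the outermost entries, giving the face $d_0$ that drops the first $M$-factor. Degeneracies are handled analogously: $s_i \sqcup s_i^{op}$ inserts identities at mirror positions, and the identity $\overline{id} = id$ ensures fixed-point preservation, matching the standard degeneracy under $b$.

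The only real work is combinatorial bookkeeping: translating the $\sqcup^{op}$ construction into explicit index shifts in the $M^{2p+1}$ tuples and confirming that the anti-involution axioms deliver exactly the formulas of the twisted two-sided bar. No genuine obstacle arises, since those axioms were designed for precisely this purpose.
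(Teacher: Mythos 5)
Your proposal is correct and follows essentially the same route as the paper: identify the fixed tuples, project to the first $p+1$ entries, observe that the $q$-direction maps act componentwise and so commute with projection, and then reduce the $p$-direction faces via $Sd(\delta^i)$ skipping positions $i$ and $2p+1-i$, with the middle face $d_p$ producing the twisted action $m_p\, m_{p+1}\, \overline{m_p}$. The paper only writes out the $d_p$ case explicitly and asserts compatibility for the rest; you supply the remaining bookkeeping (the cases $0<i<p$, $i=0$, and the degeneracies), which is a more complete version of the same argument rather than a different one.
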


The map $p \colon B(M_{\infty},M,\ast) \to  BM$ induces a map
\[Sd_hp \colon Sd_hB(M_{\infty},M,\ast) \to Sd_h BM\]
on subdivisions. Since $p$ satisfies the conditions of \ref{cor:leveltoglobal}, the map $Sd_hp$ does as well, by \ref{lemma:sdbihfib}. Therefore, $Sd_hp$ is a homology fibration.

\begin{lemma}
  The pullback of $Sd_hp$ along the inclusion 
\[B(\ast,M,M^\g) \inj Sd_h BM\]
is isomorphic to $B(M_\infty,M, M^\g)$.
\end{lemma}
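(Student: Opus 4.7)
The plan is a direct level-by-level identification. Since pullbacks of bisimplicial sets are formed bidegree-wise, I would first describe the pullback in bidegree $(p,q)$ and then verify that the natural bijection I find is compatible with all structure maps.

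In bidegree $(p,q)$ the map $(Sd_h p)_{p,q}$ is the projection $(M_\infty)_q \times M_q^{\times (2p+1)} \too M_q^{\times (2p+1)}$, and the preceding lemma identifies the inclusion $B(\ast,M,M^\g)_{p,q} \inj (Sd_h BM)_{p,q}$ with the injection
\[
(m_1,\ldots, m_p, n) \mapsto (m_1,\ldots, m_p, n, \overline{m_p}, \ldots, \overline{m_1}), \qquad n \in M_q^\g.
\]
Hence the pullback at $(p,q)$ is naturally isomorphic to $(M_\infty)_q \times M_q^{\times p} \times M_q^\g$ via $(x, m_1,\ldots,m_p,n, \overline{m_p}, \ldots, \overline{m_1}) \mapsto (x, m_1, \ldots, m_p, n)$. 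On the right hand side this is precisely the set of $(p,q)$-bisimplices of $B(M_\infty, M, M^\g)$ where the left action of $M$ on $M^\g$ is the one specified in the text, $(m, n) \mapsto m n \overline{m}$.

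The vertical structure is clearly compatible, since all simplicial maps of $M_q$ act diagonally on the factors on both sides and the fixed-point condition $\overline{n}=n$ is preserved. The horizontal structure requires more care. Using the formula $\theta \mapsto (\theta \sqcup \theta^{op})^\ast$ for simplicial structure maps in the Segal edgewise subdivision, one must verify that each horizontal face or degeneracy of $Sd_h B(M_\infty,M,\ast)$ restricts on the pullback to the corresponding map of $B(M_\infty,M,M^\g)$.

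The crucial case is the last face. Here $d_p \sqcup (d_p)^{op}$ skips precisely the two central vertices $p$ and $p+1$ of $[2p+1]$ and so acts on $B(M_\infty,M,\ast)$ as the composition of two adjacent middle face maps:
\[
(x, m_1, \ldots, m_p, n, \overline{m_p}, \ldots, \overline{m_1}) \mapsto (x, m_1, \ldots, m_{p-1}, m_p n \overline{m_p}, \overline{m_{p-1}}, \ldots, \overline{m_1}).
\]
The element $m_p n \overline{m_p}$ is $\g$-fixed (since $\overline{n}=n$), and under the identification of the previous paragraph this is exactly the last face of $B(M_\infty,M,M^\g)$ using the specified left action. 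Analogously $d_0 \sqcup (d_0)^{op}$ skips the two extreme vertices $0$ and $2p+1$, hence acts as $d_0 \circ d_{2p+1}$ and reproduces $(x, m_1, \ldots, m_p, n) \mapsto (xm_1, m_2, \ldots, m_p, n)$, matching $d_0$ in $B(M_\infty,M,M^\g)$. The intermediate faces and the degeneracies are handled by the same bookkeeping. I do not foresee any real obstacle; the isomorphism is essentially forced once the fixed-point computation of the preceding lemma has been made, and the bulk of the work is just the index chasing in the subdivision formula.
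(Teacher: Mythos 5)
Your proposal is correct, and it is precisely the "straightforward" verification that the paper omits: identify both bisimplicial sets in bidegree $(p,q)$ using the fixed-point bijection $b_{p,q}$ from the preceding lemma, then check that the subdivided horizontal faces $(\theta\sqcup\theta^{op})^\ast$ of $Sd_hB(M_\infty,M,\ast)$ restrict to the bar-construction faces of $B(M_\infty,M,M^\g)$. Your computations for $d_p$ (as $d_p\circ d_{p+1}$, producing the conjugation action $m_p n\overline{m_p}$), for $d_0$ (as $d_0\circ d_{2p+1}$), and for the intermediate faces (using $\overline{m_{i+1}}\,\overline{m_i}=\overline{m_im_{i+1}}$) are all correct, and the degeneracy cases go through the same way.
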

The proof is straightforward. It now follows from \ref{lemma:basechange} that the square
\begin{align*}\label{square}\tag{*}\xymatrix{B(M_\infty,M,M^\g) \ar[r] \ar[d] &Sd_hB(M_{\infty},M,\ast) \ar[d]\\
B(\ast,M,M^\g) \ar[r] & Sd_h BM }
\end{align*}
becomes homology cartesian after taking diagonals. We consider $M^\g$ as a bisimplicial set which is constant in the first variable. Define the map
\[i \colon M^\g \to B(M,M,M^\g)\]
levelwise by 
\[m \mapsto (e,e,\ldots, e,m).\]
This map has a retraction $r$ given by 
\[r(m_0,m_1,\ldots, m_p,m) = m_0 \cdot m_1\cdots m_p\cdot m \cdot\overline{m_p}\cdots \overline{m_1}\cdot \overline{m_0}.\]
There is a standard simplicial homotopy $r\circ i \simeq id$. The map $M \to M_\infty$ of \ref{lemma:mincl} induces a map
\[j \colon B(M,M,M^\g) \to B(M_\infty,M,M^\g).\]
\begin{lemma}\label{lemma:ji}
  The map $j \circ i \colon M^\g \to B(M_\infty,M,M^\g)$ induces an isomorphism of left $\pi_0(M)$-sets
\[\pi_0(M^\g)[\pi_0(M)^{-1}] \iso \pi_0(B(M_\infty,M,M^\g)\]
and an isomorphism of $H_\ast(M)$-modules 
\[ H_\ast(M^\g)[\pi_0(M)^{-1}] \iso H_\ast(B(M_\infty,M,M^\g)).\]
\end{lemma}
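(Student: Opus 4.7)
The plan is to reduce the statement to Lemma \ref{lemma:mincl} via the weak equivalence $i$ together with Corollary \ref{cor:infty}. First, $i$ is a weak equivalence: its retraction $r$ is the standard augmentation of the bar construction $B(M, M, -)$ applied to the left $M$-module $M^\g$ (with $M$ acting by conjugation $m \cdot y = m y \bar m$), and the standard extra degeneracy $s_{-1}(m_0, \ldots, m_p, y) = (e, m_0, \ldots, m_p, y)$ provides a simplicial homotopy $i \circ r \simeq \mathrm{id}$ to complement the identity $r \circ i = \mathrm{id}$. In particular, $i_\ast$ is an isomorphism on both $H_\ast$ and $\pi_0$.

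Corollary \ref{cor:infty} with $X = M^\g$ identifies $dB(M_\infty, M, M^\g)$ with the telescope $(dB(M,M,M^\g))_\infty$ obtained by iterating left multiplication by $t$ on the leftmost $M$-factor. The anti-involution relation $\overline{t m_0} = \overline{m_0}\bar t$ gives
\[r(t m_0, m_1, \ldots, m_p, y) = t \cdot r(m_0, m_1, \ldots, m_p, y) \cdot \bar t,\]
so $r$ intertwines this $t$-action with conjugation by $t$ on $M^\g$. Hence $r$ induces a weak equivalence of telescopes $(dB(M, M, M^\g))_\infty \stackrel{\simeq}{\to} (M^\g)_\infty$, where $(M^\g)_\infty$ denotes the mapping telescope of iterated conjugation $y \mapsto t y \bar t$ on $M^\g$. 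Under this equivalence, $j \circ i$ corresponds to the canonical inclusion $M^\g \to (M^\g)_\infty$ of the zeroth stage.

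The conclusion is then algebraic. Since $H_\ast(M^\g)$ is a left $H_\ast(M)$-module via conjugation and $\pi_0(M)$ is central in $H_\ast(M)$ by hypothesis, multiplication by $[t]$ on $H_\ast(M^\g)$ is $H_\ast(M)$-linear, so
\[H_\ast\bigl((M^\g)_\infty\bigr) \cong \colim\bigl(H_\ast(M^\g) \stackrel{[t]}{\to} H_\ast(M^\g) \stackrel{[t]}{\to} \cdots\bigr) \cong H_\ast(M^\g)[t^{-1}] \cong H_\ast(M^\g)[\pi_0(M)^{-1}],\]
the last isomorphism because $[t]$ cofinally generates $\pi_0(M)$. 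Under this chain of identifications, $(j \circ i)_\ast$ becomes the canonical localization map, which is an isomorphism after inverting $\pi_0(M)$ on the source. The $\pi_0$-statement is identical, using that $\pi_0$ commutes with sequential homotopy colimits of simplicial sets.

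The point requiring the most care is the book-keeping of the $H_\ast(M)$-module (respectively $\pi_0(M)$-set) structure through these identifications; in particular, one must verify that the right $M$-action on $M_\infty$ translates, under $r_\infty$, to the conjugation action of $M$ on $M^\g$, which follows by a straightforward extension of the intertwining computation displayed above.
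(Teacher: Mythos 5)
Your proposal is correct and follows essentially the same route as the paper: apply Corollary \ref{cor:infty} to identify $dB(M_\infty,M,M^\g)$ with the telescope $(dB(M,M,M^\g))_\infty$, use the retraction $r$ (which intertwines left multiplication by $t$ with conjugation by $t$ on $M^\g$) to obtain a weak equivalence of telescopes onto $(M^\g)_\infty$, and then conclude algebraically exactly as in Lemma \ref{lemma:mincl}. The only cosmetic difference is that the paper maps the levelwise diagram down to $M^\g$ via $dr$ and takes homotopy colimits rather than first invoking that $i$ is a weak equivalence, but the content is the same.
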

\begin{proof} We present the argument for homology, the one for $\pi_0$ is similar. By \ref{cor:infty} there is an isomorphism $dB(M,M,M^\g)_\infty \cong B(M_\infty,M,M^\g)$. In the diagram 
\[\xymatrix{dB(M,M,M^\g) \ar[r]^{t \cdot} \ar[d]^{dr} & dB(M,M,M^\g) \ar[r]^{t \cdot} \ar[d]^{dr} & dB(M,M,M^\g) \ar[r]^-{t \cdot} \ar[d]^{dr}  & \cdots\\
M^\g \ar[r]^{t \cdot} & M^\g \ar[r]^{t \cdot} & M^\g \ar[r]^{t \cdot} & \cdots }\]
the vertical maps are weak equivalences and hence induce a weak equivalence of homotopy colimits $dB(M,M,M^\g)_\infty \stackrel{r_\infty}{\too} M^\g_\infty$. In homology we get a sequence of isomorphisms of left $H_\ast(M)$-modules
\[H_\ast(B(M_\infty,M,M^\g)) \iso H_\ast(M^\g_\infty)  \iso H_\ast(M^\g)[\pi_0(M)^{-1}].\]

\end{proof}
 
Let $(X,x)$ be a based $\g$-space with $\sigma \colon X \to X$ representing the action of the non-trivial element of $\g$. The homotopy fiber $hF_{\iota_X}$ of the canonical inclusion
\[\iota_X \colon X^\g \inj X \]
of the fixed points is (homeomorphic to) the space of paths $\chi \colon [0,\tfrac{1}{2}] \to X$ such that $\chi(0) = x$ and $\chi(\tfrac{1}{2})\in X^\g$. There is a map
\[b_X \colon hF_{\iota_X}  \to (\Omega^{1,1}X)^\g\]
given by $b_X(\chi) = \chi \ast (\sigma \circ \overline{ \chi})$ where $\ast$ is the concatenation operation and $\overline{ \chi}$ is the path $t \mapsto \chi(1-t)$. This map is a homeomorphism with inverse given by restricting loops to $[0,\tfrac{1}{2}]$.

Now we apply geometric realization to the square (\ref{square}) to obtain a homology cartesian square of spaces
\[\xymatrix{|B(M_\infty,M,M^\g)| \ar[r] \ar[d] &|B(M_{\infty},M,\ast)| \ar[d]\\
|B(\ast,M,M^\g)| \ar[r] & | BM|. }\]
The space $|B(M_\infty,M, \ast)|$ is contractible and so $|B(M_\infty,M,M^\g)|$ is homology equivalent to the homotopy fiber of the composite
\[|B(\ast,M,M^\g)| \cong |B^{1,1}M|^\g \inj |BM|.\]

By the discussion above, this space is homeomorphic to $(\Omega^{1,1}|B^{1,1}M|)^\g$ and so we get a homology equivalence
\[g \colon |B(M_\infty,M,M^\g)| \to (\Omega^{1,1}|B^{1,1}M|)^\g.\]

\begin{theorem}\label{theorem:mainmon}
  Let $M$ be a simplicial monoid with anti-involution such that $\pi_0 M$ is in the center of $H_\ast(M)$. Then the map
\[\eta^\g_M \colon M^\g \to (\Omega^{1,1}|B^{1,1}M|)^\g\]
induces an isomorphism 
\[\pi_0(M^\g)[\pi_0(M)^{-1}] \iso \pi_0(\Omega^{1,1}|B^{1,1}M|)^\g\]
of $\pi_0(M)$-sets and an isomorphism of $H_\ast(M)$-modules 
\[H_\ast(M^\g)[\pi_0(M)^{-1}] \iso H_\ast((\Omega^{1,1}|B^{1,1}M|)^\g).\]
\end{theorem}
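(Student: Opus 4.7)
The plan is to assemble the pieces already built in the section: Lemma \ref{lemma:ji} gives the required localization isomorphism for the composite $j \circ i \colon M^{C_2} \to B(M_\infty, M, M^{C_2})$, and the paragraph immediately before the theorem produces a homology equivalence $g \colon |B(M_\infty,M,M^{C_2})| \to (\Omega^{1,1}|B^{1,1}M|)^{C_2}$ coming from the homology cartesian square (\ref{square}) together with the identification of the fiber over a fixed point via the homeomorphism $b_X$. So the entire task reduces to comparing $\eta^{C_2}_M$ with the composite $g \circ |j \circ i|$, and then invoking Lemma \ref{lemma:ji}.

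First I would set up the comparison on $\pi_0$. By Lemma \ref{lemma:ji} the map $\pi_0(j \circ i) \colon \pi_0(M^{C_2})[\pi_0(M)^{-1}] \to \pi_0(B(M_\infty,M,M^{C_2}))$ is an isomorphism of $\pi_0(M)$-sets; $g$ is a homology equivalence and hence in particular an isomorphism on $\pi_0$. Thus $\pi_0$ of the composite $g \circ |j \circ i|$ is the desired isomorphism. It remains to show that on path components this composite agrees with $\pi_0(\eta^{C_2}_M)$: unwinding the definition of $\eta_M$ via the adjoint of $S^{1,1} \wedge |M| \to |B^{1,1}M|$, a fixed vertex $m \in M^{C_2}$ is sent in $(\Omega^{1,1}|B^{1,1}M|)^{C_2}$ to the loop $t \mapsto [t,m]$ which, after the homeomorphism $b$ and passage through the homology fiber of $|B^{1,1}M|^{C_2} \hookrightarrow |BM|$, corresponds precisely to the $0$-simplex $(e,m) \in B_1(\ast,M,M^{C_2})$ coming from the inclusion $i(m) = (e, e, \ldots, e, m)$ after further passage to $B(M_\infty,M,M^{C_2})$.

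Next I would do the same for homology. The homology equivalence $g$ gives an isomorphism $H_\ast(B(M_\infty,M,M^{C_2})) \cong H_\ast((\Omega^{1,1}|B^{1,1}M|)^{C_2})$, and this isomorphism is a map of $H_\ast(M)$-modules because the left $M$-action on $|M|^{C_2}$ by $(m,n) \mapsto mn\bar{m}$ is carried through each stage of the construction: it acts on the bar construction $B(-,M,M^{C_2})$ on the rightmost coordinate, and it acts on $(\Omega^{1,1}|B^{1,1}M|)^{C_2}$ by the rule described in the paragraph following the definition of $\eta^{C_2}_M$. Combining this with the $H_\ast(M)$-linear isomorphism of Lemma \ref{lemma:ji} produces the desired $H_\ast(M)$-linear isomorphism $H_\ast(M^{C_2})[\pi_0(M)^{-1}] \iso H_\ast((\Omega^{1,1}|B^{1,1}M|)^{C_2})$; to see that this isomorphism is induced by $\eta^{C_2}_M$, one argues as in the $\pi_0$ case by tracking a $q$-simplex through the explicit chain of maps.

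The main obstacle is checking that $g \circ |j \circ i|$ actually agrees with $\eta^{C_2}_M$ (up to a homotopy that is compatible with the $M$-action up to homotopy), because $g$ was constructed abstractly from the homology cartesian square and the homeomorphism $b_X$ rather than as an explicit chain-level map. The cleanest way to handle this is to build a commutative diagram in which the top row is $\eta^{C_2}_M$, the bottom row is $g \circ |j \circ i|$, and the vertical comparison maps come from the evident inclusion of $M^{C_2}$ into the homotopy fiber of $|B^{1,1}M|^{C_2} \hookrightarrow |BM|$ (using the constant path) combined with the adjunction identifying this homotopy fiber with $(\Omega^{1,1}|B^{1,1}M|)^{C_2}$ via $b_{|B^{1,1}M|}$. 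General $M$ is then reduced to the case with a homotopy cofinal generator by the filtered colimit argument of Theorem \ref{theorem:grcomp}, since $|-|$, $B^{1,1}$, $\Omega^{1,1}$, $(-)^{C_2}$ and $H_\ast$ all commute with filtered colimits of monoids with anti-involution.
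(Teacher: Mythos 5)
Your overall decomposition matches the paper's: both proofs reduce to the homotopy cofinal generator case, both invoke Lemma~\ref{lemma:ji} together with the homology equivalence $g$ constructed from the homology cartesian square and the homeomorphism $b_X$, and both reduce general $M$ to the cofinal case by a filtered colimit argument. The structural outline is correct.

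However, you have correctly identified the main obstacle --- showing that $g\circ |j\circ i|$ agrees with $\eta^\g_M$ --- but you do not actually resolve it. Your proposed resolution ("build a commutative diagram... with vertical comparison maps... using the constant path") does not work as stated, and it omits the idea that carries the paper's argument. First, the "constant path" is wrong: for $m\in M^\g$ not mapping to the basepoint of $|B^{1,1}M|$, the homotopy fiber of $|B^{1,1}M|^\g\hookrightarrow|BM|$ has no constant path landing on the image of $m$; one needs the genuine half-loop in the $1$-simplex on $m$, i.e.\ the path $u\mapsto(m,1-u,u)$ for $u\in[0,\tfrac{1}{2}]$. Second, and more importantly, the map $g$ is only well-defined up to a choice: it depends on a null-homotopy of $|B(M_\infty,M,\ast)|$, and comparing $g\circ|j\circ i|$ with $\eta^\g_M$ requires actually pinning this choice down. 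The paper does this by observing that any two contracting homotopies of $|B(M_\infty,M,\ast)|$ are themselves homotopic, so one is free to pick the explicit homotopy
\[h_u(m_0,\ldots,m_k,t_0,\ldots,t_k)=(e,m_0,\ldots,m_k,1-u,ut_0,\ldots,ut_k)\]
on the subcomplex $B(|M|,|M|,\ast)$ and extend it; tracing $m\mapsto(e,m,\tfrac{1}{2},\tfrac{1}{2})$ through this homotopy yields the path $u\mapsto(e,m,1-u,\tfrac{u}{2},\tfrac{u}{2})$ in $|BM|$, which $b_{|BM|}$ sends to precisely $\eta^\g_M(m)$. Without this explicit contracting homotopy (or a different device pinning down $g$), the commutative diagram you want cannot be verified, and this step is not a formality --- it is the crux of the argument. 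Your proposal is missing this idea and therefore leaves a genuine gap at the central step.
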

\begin{proof}The proof of the $\pi_0$-statement follows the same outline as the homology-statement but is easier and is therefore omitted. 

For the homology-statement we first assume that $M$ has a homotopy cofinal generator $t \in M_0$. It remains to be shown that the map $g_\ast$ induced on homology by $h$ agrees with the map induced by $\eta^\g_M$. Since $g$ is induced by a contracting homotopy of $|B(M_\infty,M,\ast)|$ we must investigate such homotopies. Any two contracting homotopies of $|B(M_\infty,M,\ast)|$ will be homotopic, so it will suffice to find some homotopy that does what we want. We write $B(|M|,|M|,\ast)$ for the realization of the two-sided bar construction on the topological monoid $|M|$ with the $|M|$-spaces $|M|$ and $\ast$ and similarly for $B|M| = B(\ast,|M|,\ast)$. There are natural homeomorphisms $B(|M|,|M|,\ast) \cong |B(M,M,\ast)|$ and $B|M| \cong |BM|$. A contracting homotopy $h$ for $B(|M|,|M|, \ast )$ given by 
\[h_u(m_0, \ldots , m_k, t_0 , \ldots , t_k) = (e,m_0, \ldots , m_k, 1-u, ut_0 , \ldots , ut_k).\]
The space $B(|M|,|M|, \ast )$ sits inside $|B(M_\infty,M,\ast)|$ as the subcomplex over the initial object of the diagram for $M_\infty$. We can choose a contracting homotopy that extends $h$ to all of $|B(M_\infty,M,\ast)|$. The space $|M^\g|$ maps into $B(|M|,|M|, \ast )$ by 
\[m \mapsto (e,m, \tfrac{1}{2},\tfrac{1}{2})\]
which maps to $(m, \tfrac{1}{2},\tfrac{1}{2})\in B|M|.$ The path traced out in $B|M|$ by an element $m \in |M^\g|$ is therefore
\[u \mapsto (e,m,(1-u),\tfrac{u}{2},\tfrac{u}{2})\]
which is an element of the homotopy fibre of the inclusion $|BM|^\g \inj |BM|$. The map $b_{|BM|}$ to the equivariant loop space $(\Omega^{1,1}|B^{1,1}M|)^\g)$ sends this element to $\eta^\g_M(m)$.

For a general $M$ we reduce to the above case by a colimit argument as in the proof of theorem \ref{theorem:grcomp}. 
\end{proof}

\section{Categories with duality}
In this section we summarize some facts we will need later. Again we make no claim of originality and the reader can consult \cite{dotto}, \cite{schlichting} or \cite{iblars} for details.
\begin{definition} A category with duality is a triple $(\cat{C}, T, \eta)$ where $\cat{C}$ is a category, $T \colon \cat{C}^{op} \to \cat{C}$ is a functor and $\eta \colon  id \to  T\circ T^{op}$ is an isomorphism of functors such that for each $c$ in $\cat{C}$ the composite map 
\[\xymatrix{Tc \ar[r]^-{\eta_{Tc}} & TT^{op}Tc \ar[r]^-{T(\eta_c)} &Tc } \] 
is the identity. If $\eta = id$, so that $T\circ T^{op} = Id_\cat{C}$, then the duality is said to be strict.
\end{definition}

\begin{example}
  A monoid $M$ can be thought of as a category $\cat{C}_M$ with one object $\ast$ and $Hom_{\cat{C}_M} (\ast,\ast) = M$ as monoids. Then a duality $T,\eta$ on $\cat{C}_M$ is the same as a monoid map $t \colon M^{op} \to M$ and an invertible element $\eta \in M$ such that $\eta t^2(m) = \eta m$ and $t(\eta) = \eta^{-1}$. The duality is strict if and only if $t$ is an anti-involution on $M$.
\end{example}

The main example of interest to us is the following (see e.g. \cite{wall}).
\begin{example} A Wall anti-structure is a triple $(R,\alpha,\varepsilon)$ where $R$ is a ring, $\alpha$ is an additive map $R \to R$ such that $\alpha(rs) = \alpha(s)\alpha(r)$ and $\varepsilon$ is a unit in $R$ such that $\alpha^2(r) = \varepsilon r \varepsilon^{-1}$ and $\alpha(\varepsilon) =  \varepsilon^{-1}$. For an anti-structure $(R,\alpha,\varepsilon)$ there is a naturally associated category with duality $P(R,\alpha,\varepsilon)$ with underlying category $P(R)$ the category of finitely generated projective (f.g.p) right $R$-modules. The duality functor on $P(R,\alpha,\varepsilon)$ is $Hom_R(-,R)$ where for an f.g.p. module $P$ we give $Hom_R(P,R)$ the right (!) module structure given by $(fr)(p) = \alpha(r) f(p)$. The isomorphism 
\[ \eta_P \colon P \iso Hom_R(Hom_R(P,R),R)\]
is given on elements $p \in P$ by $\eta_P(p)(f) = \alpha(f(p))\varepsilon$. It is straightforward to check that the equation $\eta_{Hom_R(P,R)} \circ \eta_P^\ast = id_{Hom_R(P,R)}$ holds for all f.g.p. modules $P$. 
\end{example}

\begin{definition}
  A duality preserving functor
\[ (F, \xi)  \colon (\cat{C}, T, \eta) \too (\cat{C}', T', \eta') \]
consists of a functor $F \colon \cat{C} \to \cat{C}'$ and a natural transformation
\[ \xi \colon F \circ T \to T' \circ F\]
 such that for all $c$ in $\cat{C}$
the diagram
\[ \xymatrix{ F(c) \ar[r]^{\eta'_{F(c)}} \ar[d]_{F(\eta_c)} &  T' \circ (T')^{op} \circ F \circ (c) \ar[d]^{T'(\xi_c)} \\
F \circ T \circ T^{op} (c) \ar[r]_{\xi_{T(c)}} & T' \circ F^{op} \circ T^{op} (c)}\]
commutes.
\end{definition}
Composition is given by $(G,\zeta) \circ (F,\xi)= (G \circ F,\zeta_F\circ G(\xi))$. A duality preserving functor $(F, \xi) \colon (\cat{C}, T, \eta) \too (\cat{C}', T', \eta')$ is called an equivalence of categories with duality if there is a duality preserving functor $(F', \xi') \colon (\cat{C}, T, \eta) \too (\cat{C}', T', \eta')$ and natural isomorphisms $u \colon F' \circ F \iso Id_\cat{C}$ and $u' \colon F\circ F' \iso Id_\cat{C'}$. The transformation $u$ must additionally satisfy $\xi'_{F(c)} \circ F'(\xi_c) = T(u_c)\circ u_{T(c)}$ and similarly for $u'$. 
\begin{definition}
  Let $(\cat{C},T,\eta)$ be a category with duality. The category $Sym(\cat{C},T,\eta)$ of symmetric forms in $(\cat{C},T,\eta)$ is given as follows:
  \begin{itemize}
  \item The objects of $Sym(\cat{C},T,\eta)$ are maps $f \colon a \to Ta$ such that $f = Tf\circ \eta_a$.
\item A morphism from $f \colon a \to Ta$ to $f' \colon a' \to Ta'$ is a map $r \colon a \to a'$ in $\cat{C}$ such that the diagram
\[ \xymatrix{a \ar[r]^f \ar[d]_r & Ta  \\
a'  \ar[r]_{f'} & Ta' \ar[u]_{Tr} }\]
commutes.
\item Composition is given by ordinary composition of maps in $\cat{C}$.
  \end{itemize}
\end{definition}

The reason for the name ``symmetric form'' in the preceding definition is the following. Let $(R,\alpha,\varepsilon)$ be a Wall-anti-structure. The category $SymP(R,\alpha,\varepsilon)$ has as objects maps $\varphi \colon P \to Hom_R(P,R)$ such that the adjoint map $\tilde{\varphi} \colon P \otimes_\Z P \to R$ is a biadditive form on $P$ satisfying
\begin{align*}
  \tilde{\varphi}(pr,qs) &= \alpha(r)\tilde{\varphi}(p,q)s\\
\tilde{\varphi}(q,p) &= \alpha(\tilde{\varphi}(p,q))\varepsilon,
\end{align*}
for $r,s \in R$ and $p,q \in P$. A map 
\[h \colon (P \stackrel{\varphi}{\too} Hom_R(P,R)) \to (P' \stackrel{\varphi'}{\too} Hom_R(P',R))\]
is an $R$-module homomorphism $h \colon P \to P'$ such that $\tilde{\varphi}'(h(p),h(q)) = \tilde{\varphi}(p,q)$ for all $p,q \in P$. An object $\varphi \colon P \to Hom_R(P,R)$ such that $\varphi$ is an isomorphism is called non-degenerate. 

\begin{definition}
For a category with duality $  (\cat{C}, T, \eta)$ the category $\cat{D}(\cat{C}, T, \eta)$ has objects triples $(c,c',f)$ where $f \colon c' \iso Tc$ is an isomorphism and maps from $(c,c',f)$ to $(d,d',g)$ are pairs $(r \colon c \to d,s \colon d' \to c' )$ such that the diagram
\[\xymatrix{ c' \ar[r]^f  & Tc \\
  d' \ar[r]_g \ar[u]^s & Td \ar[u]_{Tr}}\]
commutes. Composition is given by composition in each component. The duality on $\cat{D}(\cat{C}, T, \eta)$ is given by sending an object $f \colon c' \iso Tc$ to the composite $c \stackrel{\eta_c}{\too} TT^{op}c \stackrel{Tf}{\too} Tc'$ and $(r \colon c \to d,s \colon d' \to c' )$ to $(s \colon d' \to c', r \colon c \to d )$. 
\end{definition}
It is easy to see that the duality on $\cat{D}(\cat{C}, T, \eta)$ is strict. The functor
\[(I,\iota) \colon (\cat{C}, T, \eta) \to \cat{D}(\cat{C}, T, \eta)\]
given by $I(c) = (c,Tc,id_{T(c)})$, $I(f) = (f,Tf)$ and $\iota_c = (id_{T(c)},\eta_c)$ is  an equivalence of categories with duality. Its inverse is the duality preserving functor
\[(K,\kappa) \colon \cat{D}(\cat{C}, T, \eta) \to (\cat{C}, T, \eta),\]
given by $K(c,c',f) = c$, $K(r,s) = r$ and $\kappa_{(c,c',f)} = f$. Both the construction $\cat{D}$ and the functors $K$ and $I$ are functorial in $(\cat{C}, T, \eta)$ for duality preserving functors. 

From the topological perspective the effect of the $\cat{D}$-construction is to replace the geometric realization $|N\cat{C}|$, which has an action of $\g$ in the homotopy category, by the bigger space $|N\cat{D}\cat{C}|$ which has a continuous action of $\g$. The actions are compatible in the sense that the maps $|NI|$ and $|NK|$ are mutually inverse isomorphisms of $\g$-objects in the the homotopy category. Observe that a strict duality $T$ on a category $\cat{C}$ gives a map 
\[NT \colon (N\cat{C})^{op} = N(\cat{C}^{op}) \iso N\cat{C}\]
such that $NT \circ (NT)^{op} = id_{N\cat{C}}$. We know from \ref{lemma:realstr} that this is equivalent to extending the simplicial structure of $N\cat{C}$ to a real simplicial structure. It follows that the realization has an induced $\g$-action given by \[[(c_0 \stackrel{f_1}{\too}  \ldots \stackrel{f_n}{\too} c_n ,t_0, \ldots t_n)] \mapsto [(c_n \stackrel{Tf_n}{\too} \ldots \stackrel{Tf_1}{\too} Tc_0,t_n, \ldots t_0)], \]
for $(c_0 \stackrel{f_1}{\too}  \ldots \stackrel{f_n}{\too} c_n ,t_0, \ldots t_n) \in N_n\cat{C}\times \Delta^n$.
 
\begin{definition}
  Let $\cat{C}$ be a category. Its subdivision $Sd\cat{C}$ is a category given as follows: An object of $Sd\cat{C}$ is a morphism $f \colon a \to b$ in $\cat{C}$ and a map from $f \colon a \to b$ to $g \colon c \to d$ is a pair $(h,i)$ of maps such that the following diagram commutes
\[\xymatrix{a \ar[r]^f\ar[d]_h &   b  \\
c \ar[r]_g   & \ar[u]_id.
}\]
Composition is given by $(h',i') \circ (h,i) = (h' \circ h,i \circ i')$.
\end{definition}
Note that $SdN\cat{C} = NSd\cat{C}$. If $(\cat{C},T,\eta)$ is a category with duality then there is an induced functor 
\[SdT \colon Sd\cat{C} \to Sd\cat{C}\]
given by $SdT(a \stackrel{f}{\too} b) = Tb \stackrel{Tf}{\too} Ta$ and $SdT(h,i) = (Ti,Th)$. If $T$ is a strict duality then $Sym(\cat{C})$ is the category fixed under the $\g$-action defined by $SdT$. The functor 
\[(I,\iota) \colon (\cat{C},T,\eta) \to \cat{D}(\cat{C},T,\eta)\]
induces an equivalence of categories $Sym(I)\colon Sym(\cat{C}) \to Sym(\cat{D}\cat{C})$, so up to equivalence, $Sym(\cat{C})$ can be thought of as a fixed category under a strict duality.

\section{$K$-theory of additive categories with duality}
\begin{definition}
  Let $\cat{C}$ be a category and let $X$ and $X'$ be objects of $\cat{C}$. A biproduct diagram for the pair $(X,X')$ is a diagram
\begin{equation}\label{biproduct}\xymatrix{X \ar@<-2pt>[r]_{i_1} & Y \ar@< 2pt>[r]^{p_2} \ar@<-2pt>[l]_{p_1} & X' \ar@< 2pt>[l]^{i_2}}
\end{equation}
in $\cat{C}$ such that the $p_j$-s express $Y$ as the product of $X$ and $X'$ and the $i_j$-s express $Y$ as a coproduct of $X$ and $X'$ 
\end{definition}

If $\cat{C}$ is a category which has a zero object and each pair of objects has a biproduct diagram in $\cat{C}$ the hom-sets of $\cat{C}$ naturally inherit the structure of commutative monoids such that composition is bilinear \cite[VIII,2]{categories}. We call such a category $\cat{C}$ \emph{additive} if the hom-sets are abelian groups, not just monoids. A functor between additive categories is called additive if it preserves biproducts. Additive functors induce group homomorphisms on hom-groups.

Let $X$ be a finite pointed set. The category $Q(X)$ is defined as follows: The objects in $Q(X)$ are the pointed subsets $U \subseteq X$. A morphism $U \to V$ of pointed subsets is a pointed subset of the intersection $U \cap V$. The composition of two subsets $A \subseteq U \cap V$ and $B \subseteq V \cap W$ is $A \cap B \subseteq U \cap W$. Note that $A \subseteq U \cap V$ can be thought of both as a map from $U$ to $V$ and as a map from $V$ to $U$, in fact $Q(X) = Q(X)^{op}$. 

\begin{definition}
  Let $\cat{C}$ be an additive category and $X$ a finite pointed set. A sum-diagram in $\cat{C}$ indexed by $X$ is a functor
\[A \colon Q(X) \to \cat{C}\]
such that for any subset $U \subseteq X$ the maps
\[A(U) \to A(\{u,\ast\})\]
induced by the pointed subsets $\{u,\ast\} \subseteq U$, exhibit $A(U)$ as the product of the $A(\{u,\ast\})$'s. 
\end{definition}
This is equivalent to thinking of sum-diagrams as sheaves on $Q(X)$ endowed with a certain Grothendieck topology, see \cite[4.3]{iblars} for details. 

A \emph{pointed} category is an category $\cat{C}$ with a chosen object $0_{\cat{C}}$. Wen $\cat{C}$ is additive $0_{\cat{C}}$ will always be a zero-object, but in general it need not be. We say that a functor between pointed categories is pointed if it preserves the chosen objects. Many of the constructions we will do in the following rely on having chosen basepoints. To avoid confusion and to make our constructions functorial we will usually work with pointed categories. 

For a pointed additive category and a finite pointed set $X$ we require that the elements of $\cat{C}(X)$ be pointed, i.e., that the send the subset $\{\ast\}$ to $0_{\cat{C}}$. We write $\cat{C}^X$ for the (pointed) category $Fun_\ast(X,\cat{C})$ of pointed functors from $X$ to $\cat{C}$, where we think of $X$ as a discrete category. There is a natural evaluation functor $e_X \colon \cat{C}(X) \to \cat{C}^X$ given on objects by $e_X(A)(x) = A(\{x,\ast\})$ and similarly for morphisms.
\begin{lemma}\label{lemma:ev}
  Let $\cat{C}$ be a pointed additive category. For any finite pointed set $X$ the functor
\[ e_X \colon  \cat{C}(X) \to \cat{C}^X\]
is an equivalence of categories.
\end{lemma}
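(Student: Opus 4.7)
The plan is to construct an explicit inverse (up to natural isomorphism) of $e_X$, namely a functor $s_X \colon \cat{C}^X \to \cat{C}(X)$ that assembles a sum-diagram out of a pointed functor $F \colon X \to \cat{C}$ by taking biproducts. For each pointed subset $U \subseteq X$, choose a biproduct object $s_X(F)(U) = \bigoplus_{u \in U \setminus \{\ast\}} F(u)$ together with its structure maps; set $s_X(F)(\{\ast\}) = 0_{\cat{C}}$. A morphism $W \subseteq U \cap V$ in $Q(X)$ from $U$ to $V$ is sent to the composite of the canonical projection $s_X(F)(U) \to s_X(F)(W)$ with the canonical inclusion $s_X(F)(W) \to s_X(F)(V)$, and the definition on a natural transformation $F \to F'$ is the evident biproduct map. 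Functoriality in $Q(X)$ amounts to checking that, for composable morphisms $W \subseteq U \cap V$ and $W' \subseteq V \cap V''$, the projection-inclusion composite through $\bigoplus_V$ equals the projection-inclusion across $W \cap W'$; this follows from the universal property of biproducts together with the fact that the projections and inclusions in $\cat{C}$ satisfy the usual matrix identities, since $\cat{C}$ is additive.

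First I would check that $s_X(F)$ is actually a sum-diagram, i.e.\ that for each $U$ the projections $s_X(F)(U) \to s_X(F)(\{u,\ast\})$ coming from the subsets $\{u,\ast\} \subseteq U$ express $s_X(F)(U)$ as a product of the $F(u)$'s. This is immediate from the biproduct choice. Hence $s_X$ lands in $\cat{C}(X)$. Next I would produce the natural isomorphism $e_X \circ s_X \cong id_{\cat{C}^X}$: evaluating $s_X(F)$ on a singleton $\{x,\ast\}$ returns a chosen biproduct of the single object $F(x)$, which comes with a canonical isomorphism to $F(x)$, and these assemble into a natural isomorphism.

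For the other composite $s_X \circ e_X \cong id_{\cat{C}(X)}$, let $A \in \cat{C}(X)$. By the defining property of $\cat{C}(X)$, the maps $A(U) \to A(\{u,\ast\})$ exhibit $A(U)$ as a product, hence, since $\cat{C}$ is additive and these are biproduct projections, there is a unique isomorphism $A(U) \iso \bigoplus_{u \in U \setminus \{\ast\}} A(\{u,\ast\}) = s_X(e_X(A))(U)$ commuting with the projections onto each $A(\{u,\ast\})$. I would then verify that these isomorphisms are natural in $U \in Q(X)$: the maps in $s_X(e_X(A))$ are, by construction, the unique maps compatible with all singleton projections, and the maps in $A$ are compatible with singleton projections by functoriality of $A$, so the two agree. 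This also takes care of naturality in $A$, giving the required natural isomorphism $s_X \circ e_X \cong id_{\cat{C}(X)}$.

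The genuinely delicate point — and the main obstacle I expect — is the functoriality of $s_X(F)$ on $Q(X)$: the composition law in $Q(X)$ is intersection of subsets, and one needs the projection-inclusion composites in $\cat{C}$ to behave correctly under arbitrary pairs $(W, W')$, in particular when $W$ and $W'$ overlap only partially inside $V$. The cleanest way to handle this is to express every morphism in $Q(X)$ as a product of a projection (from $U$ to $U \cap V$) and an inclusion (from $U \cap V$ to $V$), note that projections compose with projections and inclusions with inclusions in the obvious way, and reduce the mixed composition to the identity $p_W \circ i_{W'} = i_{W \cap W'} \circ p_{W \cap W'}$ in the biproduct $\bigoplus_V$, which is a standard consequence of the biproduct matrix calculus. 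Once this identity is in hand, both the functoriality of $s_X(F)$ and the naturality computations in the previous paragraph are routine.
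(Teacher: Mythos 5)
Your proof is correct. The paper states this lemma without supplying a proof (it points to \cite{iblars} for background on sum-diagrams), so there is no in-text argument to compare against; your construction of a quasi-inverse $s_X$ by choosing a biproduct over the non-basepoint elements of each pointed subset is exactly the standard argument one would write, and the ``delicate point'' you flag---functoriality of $s_X(F)$ on $Q(X)$ via the matrix identity $p_W \circ i_{W'} = i_{W \cap W'} \circ p_{W \cap W'}$ inside $\bigoplus_V$---is indeed where the work lies and goes through as you describe, as does the naturality check for $s_X \circ e_X \cong id$ using that the singleton projections jointly detect morphisms by the sum-diagram property.
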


A pointed map $f \colon X \to Y$ induces a pushforward functor $f_\ast \colon \cat{C}(X) \to \cat{C}(Y)$ given by 
\[(f_\ast(A))(U) = A(f^{-1}(U\setminus\{\ast\})\cup\{\ast\})).\]  
Given two composable maps $f$ and $g$ of finite pointed sets it is not hard to see that $(f\circ g)_\ast = f_\ast \circ g_\ast$, so that we get a functor 
\[\cat{C}(-) \colon FinSet_\ast \to Cat_\ast,\]
where $FinSet_\ast$ is the category of finite sets and pointed maps and $Cat_\ast$ is the category of small pointed categories and pointed functors between them. This notion coincides up to suitable equivalence with Segal's $\Gamma$-category construction \cite{catcoh}. If $S$ is a pointed simplicial set which is finite in each simplicial level we can regard it as a functor $S \colon \Delta^{op} \to FinSet_\ast$ and form the composite functor $\cat{C}(S)$ which is a simplicial pointed category, i.e. a simplicial object in $Cat_\ast$.

\begin{definition}An additive category with weak equivalences is a pair $(\cat{C},w\cat{C})$ where $\cat{C}$ is an additive category and $w\cat{C} \subseteq \cat{C}$ is a subcategory such that
  \begin{itemize}
  \item all isomorphisms are in $w\cat{C}$
\item if $f$ and $g$ are in $w\cat{C}$ then their coproduct $f \oplus g$ is in $w\cat{C}$. 
  \end{itemize}

\end{definition}

A map $F \colon (\cat{C},w\cat{C}) \to (\cat{C}',w'\cat{C}')$ additive categories with weak equivalences is an additive functor on the underlying categories which preserves weak equivalences. It is an equivalence of additive categories with weak equivalences if the underlying functor is an equivalence and any inverse of it preserves weak equivalences. If $\cat{C}$ is pointed we take $w\cat{C}$ to be pointed with the same chosen object as $\cat{C}$.

Let $(\cat{C},w\cat{C})$ be a pointed additive category with weak equivalences and $X$ a finite pointed set. The subcategory $w\cat{C}(X) \subseteq \cat{C}(X)$ which has the same objects as $\cat{C}(X)$ and morphisms that are pointwise in $w\cat{C}$ is a subcategory of weak equivalences. If $f \colon X \to Y$ is a pointed map, the functor $f_\ast$ sends $w\cat{C}(X)$ into $w\cat{C}(Y)$, so there is an induced functor  
\[w\cat{C}(-) \colon FinSet_\ast \to Cat_\ast.\]
As in lemma \ref{lemma:ev} the functor $we_X \colon w\cat{C}(X) \to w\cat{C}^X$ induced by $e_X$ is an equivalence of categories. We write $S^1$ for the simplicial circle $\Delta^1/\partial \Delta^1$, with basepoint $[\partial \Delta^1]$. The space $\Omega|Nw\cat{C}(S^1)|$ is a model for the algebraic $K$-theory of $(\cat{C},w\cat{C})$, analogous to the space $\Omega |BM|$ for a simplicial monoid $M$. 

The functor $w\cat{C} \to w\cat{C}(S^1_1)$ sending an object $c$ to the diagram with value $c$ on the non-trivial subset of $S^1_1$ and $0_{\cat{C}}$ on $\{\ast\}$ is an equivalence of categories. There is an induced map
\[\Delta^1 \boxtimes Nw\cat{C} \to Nw\cat{C}(S^1)\]
of bisimplicial sets which induces a map
\[\eta_{\cat{C}} \colon |Nw\cat{C}| \to \Omega|Nw\cat{C}(S^1)|\]
of spaces. In \cite[§4]{catcoh} Segal proves a group completion theorem for the map $\eta_{\cat{C}}$ analogous to \ref{theorem:grcomp}. We will mimic the treatment of the monoid case above to reprove Segal's result and extend it to an equivariant statement analogous to \ref{theorem:mainmon} in the case that $\cat{C}$ has an additive duality. 
\begin{lemma} (see e.g. \cite[5.11]{iblars})\label{lemma:strictify} Let $(\cat{C},w\cat{C})$ be an additive category with weak equivalences. Then there is a pointed additive category with weak equivalences $(\cat{C}',w'\cat{C}')$ and an additive equivalence $F \colon (\cat{C},w\cat{C}) \to (\cat{C}',w'\cat{C}')$ such that $(\cat{C}',w'\cat{C}')$ has a coproduct functor 
\[\oplus \colon \cat{C}' \times \cat{C}' \to \cat{C}'\]
 making $\cat{C}'$ a strictly unital, strictly associative symmetric monoidal category. 
\end{lemma}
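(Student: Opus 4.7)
The plan is to build $\cat{C}'$ as a category of finite ordered tuples of objects of $\cat{C}$ with concatenation as $\oplus$, so that strict associativity and unitality are automatic. Using the axiom of choice, fix for each finite ordered list $\underline{c} = (c_1, \ldots, c_n)$ of objects of $\cat{C}$ a specific biproduct $\Sigma \underline{c}$ in $\cat{C}$ together with its inclusions and projections, normalized so that $\Sigma () = 0_{\cat{C}}$ and $\Sigma(c) = c$ on singletons. Let $\cat{C}'$ have these tuples as objects, the empty tuple as basepoint, and hom-sets $\cat{C}'(\underline{c}, \underline{d}) := \cat{C}(\Sigma \underline{c}, \Sigma \underline{d})$ with composition inherited from $\cat{C}$. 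This is immediately a pointed additive category, since hom-sets are already abelian groups and a biproduct of two tuples is represented by their concatenation.

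Next, define $\underline{c} \oplus \underline{d}$ on objects by concatenation of tuples, which is literally associative and has $()$ as a strict two-sided unit. On morphisms, declare $f \oplus g$ to be the composite
\[ \Sigma(\underline{c} \oplus \underline{c}') \stackrel{\cong}{\too} \Sigma \underline{c} \oplus \Sigma \underline{c}' \stackrel{f \oplus g}{\too} \Sigma \underline{d} \oplus \Sigma \underline{d}' \stackrel{\cong}{\too} \Sigma(\underline{d} \oplus \underline{d}'), \]
where the outer maps are the unique biproduct isomorphisms determined by the chosen structure maps. The symmetry $\tau_{\underline{c},\underline{d}} \colon \underline{c} \oplus \underline{d} \to \underline{d} \oplus \underline{c}$ is obtained from the symmetry of biproducts in $\cat{C}$ transported through these canonical isomorphisms.

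Define $F \colon \cat{C} \to \cat{C}'$ by $c \mapsto (c)$, which on morphisms is the identity via $\cat{C}(c,d) = \cat{C}'(F(c),F(d))$. Then $F$ is fully faithful by construction and essentially surjective since every tuple $\underline{c}$ is canonically isomorphic in $\cat{C}'$ to $F(\Sigma \underline{c})$. Let $w'\cat{C}'$ consist of those morphisms $f$ whose underlying map $\Sigma \underline{c} \to \Sigma \underline{d}$ lies in $w\cat{C}$. Both subcategory-of-weak-equivalences axioms and the fact that $F$ is an equivalence of additive categories with weak equivalences follow directly from the corresponding statements for $(\cat{C}, w\cat{C})$, using that isomorphisms of tuples are always in $w'\cat{C}'$ and that the definition of $\oplus$ on morphisms sends a pair of $w\cat{C}$-maps to a $w\cat{C}$-map.

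The only non-automatic verification is coherence: one must check that $\tau$ satisfies the hexagon axiom and naturality (the associator and unitors being identity by construction). This reduces to the standard coherence of symmetric biproducts in an additive category, because every structural isomorphism in sight is uniquely determined by the universal properties of the chosen biproducts in $\cat{C}$; this is the only real bookkeeping in the argument.
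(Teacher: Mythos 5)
The paper does not prove this lemma itself; it delegates to \cite[5.11]{iblars}, so there is no internal argument to compare against. Your tuple-category construction is the standard strictification device for producing a permutative model of an additive category, and your argument is correct. The point worth making completely explicit is that strict associativity and unitality must hold as equalities of \emph{morphisms}, not just objects: one needs $(f\oplus g)\oplus h = f\oplus(g\oplus h)$ and $\mathrm{id}_{()}\oplus f = f = f\oplus\mathrm{id}_{()}$ in $\cat{C}'$, and both hold because each side is the unique map with prescribed components between the chosen biproduct summands, which is exactly the biproduct coherence you invoke at the end. It is also worth stating outright that $F$ is additive (it is the identity on hom-groups after the normalization $\Sigma(c)=c$) and that the quasi-inverse $\underline{c}\mapsto\Sigma\underline{c}$ preserves weak equivalences, as required by the paper's definition of an equivalence of additive categories with weak equivalences.
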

The construction $w\cat{C}(S^1)$ makes sense also for non-pointed $\cat{C}$ but one must choose a basepoint for $\Omega|Nw\cat{C}(S^1)$ and  $\eta_\cat{C}$ to be defined. This can be done is such a way that the induced map $F_{S^1} \colon w\cat{C}(S^1) \to w'\cat{C}'(S^1)$ gives a homotopy equivalence on realizations and there is a commutative diagram
\[\xymatrix{|Nw\cat{C}| \ar[d]_F \ar[r]^-{\eta_\cat{C}} &  \Omega|Nw\cat{C}(S^1)| \ar[d]^{\Omega|NF_{S^1}|}\\
|Nw'\cat{C}'|  \ar[r]_-{\eta_\cat{C}'} &  \Omega|Nw'\cat{C}'(S^1)|}\]
of spaces, in which the vertical maps are homotopy equivalences and H-maps. From now on we assume, without loss of generality, that $(\cat{C},w\cat{C})$ is pointed and has a coproduct functor $\oplus$ as in lemma \ref{lemma:strictify}. 

The path components of the nerve $Nw\cat{C}$ will be called weak equivalence classes. The set $\pi_0Nw\cat{C}$ of such classes is a commutative monoid under the operation $[a]+[b] = [a \oplus b]$. We assume that the $\pi_0Nw\cat{C}$ has a cofinal generator represented by an object $t$ of $\cat{C}$. Then there is a functor $t \oplus - \colon \cat{C} \to \cat{C}$ which restricts to an endofunctor on $w\cat{C}$. By analogy with the monoid case above we form the diagram
\[ w\cat{C} \stackrel{t \oplus -}{\too}  w\cat{C} \stackrel{t \oplus -}{\too} w\cat{C} \stackrel{t \oplus -}{\too} \cdots \]
of categories. We define $\underline{\mathbb{N}}$ to be the category generated by the graph 
\[0 \to 1 \to 2 \to \cdots\]
so that the above diagram of categories becomes a functor $D \colon \underline{\mathbb{N}} \to Cat$ in the obvious way. Now set $w\cat{C}_\infty = \underline{\mathbb{N}} \wr D$, where $\wr$ denotes the Grothendieck construction (see e.g. \cite{thomason}). The objects of the category  $w\cat{C}_\infty$ are pairs $(m,c) \in \mathbb{N} \times ob\cat{C}$ and a map $(n,c) \to (n+k,d)$ is a map $(t\oplus-)^k(c) \to d$ in $w\cat{C}$. Thomason \cite[1.2]{thomason} constructs a natural weak equivalence
\[\hocolim(ND) \to Nw\cat{C}_\infty.\]
Since the nerve $Nw\cat{C}$ is a simplicial monoid, its homology $H_\ast(Nw\cat{C})$ is a ring under the induced Pontrjagin product. The following is a special case of \ref{lemma:mincl}.
\begin{lemma}\label{lemma:cathloc}
  The canonical functor $w\cat{C} \to w\cat{C}_\infty$ sending an object $c$ to $(0,c)$ induces an isomorphism
\[H_\ast(Nw\cat{C})[\pi_0(Nw\cat{C})^{-1}] \iso H_\ast(Nw\cat{C}_\infty)\]
of right $H_\ast(Nw\cat{C})$-modules.
\end{lemma}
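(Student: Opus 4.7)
The plan is to obtain this as a direct instance of Lemma~\ref{lemma:mincl} applied to the simplicial monoid $M = Nw\cat{C}$. By Lemma~\ref{lemma:strictify} we may assume that $\cat{C}$ carries a strictly unital and strictly associative coproduct $\oplus$, so $Nw\cat{C}$ is a genuine simplicial monoid under $\oplus$. The symmetry isomorphisms $c \oplus d \cong d \oplus c$ show that $\pi_0(Nw\cat{C})$ is commutative; moreover, for any object $a$ of $w\cat{C}$ the endofunctors $a \oplus -$ and $- \oplus a$ of $w\cat{C}$ are naturally isomorphic and therefore induce the same map on $H_\ast(Nw\cat{C})$. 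This shows that $\pi_0(Nw\cat{C})$ lies in the centre of $H_\ast(Nw\cat{C})$, so the hypothesis of Lemma~\ref{lemma:mincl} is satisfied, and by assumption $t$ provides a homotopy cofinal generator.

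Next I would identify $Nw\cat{C}_\infty$ with the construction $(Nw\cat{C})_\infty$ of Section~\ref{section:monoids}. By definition $w\cat{C}_\infty = \underline{\mathbb{N}} \wr D$, where $D \colon \underline{\mathbb{N}} \to Cat$ is the diagram $w\cat{C} \stackrel{t \oplus -}{\too} w\cat{C} \stackrel{t \oplus -}{\too} \cdots$, and Thomason's theorem cited just before the statement gives a natural weak equivalence
\[ \hocolim(ND) \stackrel{\simeq}{\too} Nw\cat{C}_\infty. \]
The left hand side is, by definition, the homotopy colimit of the sequence
\[ Nw\cat{C} \stackrel{t \oplus -}{\too} Nw\cat{C} \stackrel{t \oplus -}{\too} Nw\cat{C} \stackrel{t \oplus -}{\too} \cdots, \]
which is precisely $(Nw\cat{C})_\infty$ in the notation of Section~\ref{section:monoids}. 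Under Thomason's equivalence the inclusion of $Nw\cat{C}$ at stage $0$ of the diagram corresponds to the functor $c \mapsto (0,c)$.

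Combining these two observations, Lemma~\ref{lemma:mincl} applied to $M = Nw\cat{C}$ with homotopy cofinal generator $t$ yields an isomorphism of right $H_\ast(Nw\cat{C})$-modules
\[ H_\ast(Nw\cat{C})[\pi_0(Nw\cat{C})^{-1}] \iso H_\ast((Nw\cat{C})_\infty) \cong H_\ast(Nw\cat{C}_\infty), \]
and by the above identification this is induced by the canonical functor $c \mapsto (0,c)$. The only mildly delicate step is checking that Thomason's equivalence really does intertwine the inclusion at stage $0$ with the functor $c \mapsto (0,c)$, but this is immediate from his explicit description of the map $\hocolim(ND) \to N(\underline{\mathbb{N}} \wr D)$.
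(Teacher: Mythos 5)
Your proposal is correct and follows exactly the route the paper intends: the paper simply asserts that Lemma~\ref{lemma:cathloc} "is a special case of \ref{lemma:mincl}" without writing out the verification, and you have supplied precisely the missing details — checking centrality of $\pi_0(Nw\cat{C})$ in $H_\ast(Nw\cat{C})$ via the natural isomorphisms $a\oplus - \cong -\oplus a$, invoking Thomason's theorem to identify $Nw\cat{C}_\infty$ with $(Nw\cat{C})_\infty$ as a homotopy colimit over $\underline{\mathbb{N}}$, and noting that the stage-$0$ inclusion corresponds to $c\mapsto(0,c)$.
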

We now recall the simplicial path construction (see \cite[1.5]{waldhausen}). Define the shift functor
\[P \colon \Delta \to \Delta\]
by $P([n]) = [0] \sqcup [n] = [n+1]$ and $P(\alpha) = id_{[0]}\sqcup \alpha$. For a simplicial object $X \colon \Delta^{op} \to \cat{A}$ the (simplicial) path object $PX$ on $X$ is defined as $PX = X \circ P^{op}$. The natural transformation $\delta^0 \colon Id_\Delta \to P$ given on objects by $\delta^0 \colon [n] \to [n+1]$ gives a natural map $d_0 \colon PX \to X$. For a simplicial set $X$ there is a natural map $PX \to X_0$ onto the vertices of $X$ which is a simplicial homotopy equivalence \cite[1.5.1]{waldhausen}. In the case of the simplicial circle the map $d_ 0 \colon PS^1 \to S^1$ induces a map $w\cat{C}(PS^1) \to w\cat{C}(S^1)$ of simplicial categories which we will also call $d_0$. There is a simplicial homotopy equivalence $PS^1 \stackrel{\simeq}{\too} \ast$ which induces a weak equivalence $Nw\cat{C}(PS^1) \stackrel{\simeq}{\too} Nw\cat{C}(\ast) \simeq \ast$ of bisimplicial sets. Let $\zeta_n \in \Delta^1_n$ be the element such that $ \zeta_n(0) = 0$ and $\zeta_n(i) = 1$ for $i \geq 1$. We denote its image in the quotient set $\Delta^1_n /\partial \Delta^1_ n$ by $z_n$ and write $\tilde{c}_{n+1}$ for the diagram in $\cat{C}((PS^1)_n)=\cat{C}(S^1_{n+1})$ whose value is $c \in ob\cat{C}$ on all pointed subsets containing $z_{n+1}$ and $0_\cat{C}$ on the other subsets. The maps between $c$'s in $\tilde{c}_{n+1}$ are all identities and the remaining maps are zero. The functor $d_0 \colon \cat{C}(S^1_{n+1}) \to \cat{C}(S^1_n)$ restricts diagrams to the part away from $z_{n+1}$, so $d_0(\tilde{c}_{n+1}) = 0_{w\cat{C}(S^1_n)}$, the $0$-diagram. Now set $c =t$. Adding the object $\tilde{t}_{n+1}$ from the left gives a functor
\[\tilde{t}_{n+1} \oplus - \colon w\cat{C}(S^1_{n+1}) \to w\cat{C}(S^1_{n+1}). \]
We define $w\cat{C}(S^1_{n+1})_\infty$ to be the Grothendieck construction on the diagram
\[ w\cat{C}(S^1_{n+1})  \stackrel{\tilde{t}_{n+1} \oplus -}{\too}  w\cat{C}(S^1_{n+1})  \stackrel{\tilde{t}_{n+1} \oplus -}{\too} w\cat{C}(S^1_{n+1})  \stackrel{\tilde{t}_{n+1} \oplus -}{\too} \cdots. \]
Since $0$ is a strict unit in $\cat{C}$ the system functors $\{\tilde{t}_{n+1}\oplus-\}_{n \geq 0}$ commutes with the structure maps of $w\cat{C}(PS^1)$, and the map $d_0 \colon w\cat{C}(PS^1) \to w\cat{C}(S^1)$. Therefore the $w\cat{C}(S^1_{n+1})_\infty$'s assemble to a simplicial category $w\cat{C}(PS^1)_\infty$ with a map $d_{0,\infty} \colon w\cat{C}(PS^1)_\infty \to  w\cat{C}(S^1)$. The inclusion of $w\cat{C}(S^1_{n+1})$ in the first spot of the diagram gives a map $w\cat{C}(PS^1) \to w\cat{C}(PS^1)_\infty$ such that the diagram
\[\xymatrix{w\cat{C}(PS^1) \ar[rr] \ar[dr]_{d_0} & & w\cat{C}(PS^1)_\infty \ar[dl]^{d_{0,\infty}}\\
&w\cat{C}(S^1)& }\]
commutes.
\begin{proposition}
  The induced map on nerves 
\[Nd_{0,\infty} \colon N w\cat{C}(PS^1)_\infty  \to N w\cat{C}(S^1)\]
is a homology fibration of bisimplicial sets.
\end{proposition}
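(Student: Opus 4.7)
The plan is to verify the hypotheses of Corollary~\ref{cor:leveltoglobal} for the bisimplicial map $Nd_{0,\infty}$: for each $n \geq 0$ the map $(Nd_{0,\infty})_n \colon N_\bullet w\cat{C}(S^1_{n+1})_\infty \to N_\bullet w\cat{C}(S^1_n)$ must be a homology fibration, and for every $\theta \colon [m] \to [n]$ in $\Delta$ and every vertex $v$ of $Nw\cat{C}(S^1_n)$ the induced map of fibres $f_n^{-1}(v) \to f_m^{-1}(\theta^\ast v)$ must be a homology equivalence.

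For the first condition, the evaluation equivalence of Lemma~\ref{lemma:ev} identifies $w\cat{C}(S^1_{n+1}) \simeq w\cat{C}^{n+1}$ so that $d_0$ becomes the projection forgetting the coordinate indexed by $z_{n+1}$, and $\tilde{t}_{n+1}$ becomes the tuple with $t$ in that coordinate and $0_\cat{C}$ elsewhere. Since $\tilde{t}_{n+1} \oplus -$ then acts only on the $z$-coordinate, the Grothendieck construction decomposes as $w\cat{C}(S^1_{n+1})_\infty \simeq w\cat{C}_\infty \times w\cat{C}^n$ and $d_{0,\infty,n}$ becomes projection onto the second factor. On nerves, the pullback along any $p$-simplex of the base is $Nw\cat{C}_\infty \times \Delta^p$, so condition (2) of Lemma~\ref{lemma:hfibprop} is immediate and $(Nd_{0,\infty})_n$ is a homology fibration.

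For the second condition I would compute directly with the pushforward formula. Writing $P\theta \colon [m+1] \to [n+1]$ for the shift of $\theta$, a short combinatorial check shows that $S^1(P\theta)^{-1}(z_{m+1}) = \{\zeta^{n+1}_j : 1 \leq j \leq \theta(0) + 1\}$, while the preimage of any other non-basepoint element of $(PS^1)_m$ does not contain $z_{n+1}$. Translated through the evaluation equivalence, the structure map $\theta^\ast$ sends an element $(a, v_1, \ldots, v_n)$ of the fibre over $v = (v_1, \ldots, v_n)$ to an element of the fibre over $\theta^\ast v$ whose $z$-coordinate is $a \oplus v_1 \oplus \cdots \oplus v_{\theta(0)}$. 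Hence the induced map of fibres $Nw\cat{C}_\infty \to Nw\cat{C}_\infty$ is $- \oplus (v_1 \oplus \cdots \oplus v_{\theta(0)})$, and by Lemma~\ref{lemma:cathloc} this is a homology equivalence, since any class in $\pi_0(Nw\cat{C})$ acts invertibly on $H_\ast(Nw\cat{C}_\infty) \cong H_\ast(Nw\cat{C})[\pi_0(Nw\cat{C})^{-1}]$.

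I expect the main obstacle to be the bookkeeping in the second step: the preimage $S^1(P\theta)^{-1}(z_{m+1})$ is a singleton except precisely when $\theta(0) > 0$---for instance when $\theta$ is the face $d_0$---and in those cases one must carefully identify the extra direct summand $v_1 \oplus \cdots \oplus v_{\theta(0)}$ that appears on the $z$-coordinate after pushforward. Once that identification is in place, the argument reduces to combining the equivalences above with the localization statement of Lemma~\ref{lemma:cathloc} and invoking Corollary~\ref{cor:leveltoglobal}.
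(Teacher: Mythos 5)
Your overall strategy coincides with the paper's: both verify the hypotheses of Corollary~\ref{cor:leveltoglobal} by using the evaluation equivalences $e_{n}$ and $e_{n,\infty}$ to compare $d_{0,\infty}$ with the projection $w\cat{C}_\infty \times w\cat{C}^{\times n} \to w\cat{C}^{\times n}$, and both treat the second condition by identifying the fiber with $w\cat{C}_\infty$ and invoking Lemma~\ref{lemma:cathloc}.

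However, there is a real gap in the first step. You write that, via the evaluation equivalence, ``the pullback along any $p$-simplex of the base is $Nw\cat{C}_\infty \times \Delta^p$, so condition (2) of Lemma~\ref{lemma:hfibprop} is immediate.'' This is not immediate, because $e_{n}$ and $e_{n,\infty}$ are only equivalences of categories, not isomorphisms, and the homology-fibration condition is stated in terms of \emph{actual} (strict) pullbacks. Being a homology fibration is not invariant under replacing $f\colon X \to Y$ by a weakly equivalent map $g\colon X' \to Y'$: for example $\ast \hookrightarrow \Delta^1$ is weakly equivalent to $\mathrm{id}_\ast$ but is not a homology fibration. So one cannot simply transport the homology-fibration property of the projection across the evaluation equivalence. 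The paper fills exactly this gap: given a simplex $\sigma\colon [m]\to w\cat{C}(S^1_n)$, it forms the pullback categories $(d_{0,\infty})_n^{-1}(\sigma)$ and $p^{-1}(e_n\circ\sigma)$, and proves directly that the induced functor between them is an equivalence of categories --- surjective on objects, and fully faithful because a morphism of sum-diagrams over $Q(S^1_{n+1})$ is uniquely determined by its components on the singleton subsets. Only after this is established does the comparison of strict pullbacks with homotopy pullbacks, and hence the homology-fibration statement, go through. That pointwise-determination property of maps of sum-diagrams is the essential ingredient you have omitted, and without it the first half of the argument does not close.

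Your treatment of the second condition is compatible with the paper's (which merely observes the fiber is equivalent to $w\cat{C}_\infty$ and refers back to Lemma~\ref{lemma:condpf}); your more explicit combinatorial identification of $\theta^\ast$ as a map that adds $v_1\oplus\cdots\oplus v_{\theta(0)}$ in the $z$-coordinate is a reasonable way to make the analogy with the monoid case precise. But the same caveat applies: one must phrase the identification of fibers through the evaluation equivalence carefully enough that the statement is about actual fibers, not just spaces weakly equivalent to them.
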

\begin{proof}
  We will show that the map satisfies the conditions of \ref{cor:leveltoglobal}. First, we verify that it is a levelwise homology fibration. The pointed set $S^1_{n+1}$ has $n+1$ non-basepoint elements and evaluation gives an equivalence of categories $we_{n+1} \colon w\cat{C}(S^1_{n+1}) \to w\cat{C}^{\times^{n+1}}$. It commutes with the functor $\tilde{t}_{n+1}\oplus-$ in the sense that the following diagram commutes
\[\xymatrix{w\cat{C}(S^1_{n+1}) \ar[r]^-{\tilde{t}_{n+1}\oplus-} \ar[d]_{we_{n+1}} & w\cat{C}(S^1_{n+1}) \ar[d]^{we_{n+1}} \\
w\cat{C}^{\times^{n+1}} \ar[r]_-{(t\oplus-)\times id_{w\cat{C}}^{\times^{n}}} & w\cat{C}^{\times^{n+1}}.}\]
There is an induced equivalence of categories $w\cat{C}(S^1_{n+1})_\infty \to w\cat{C}_\infty \times w\cat{C}^{\times^{n}}$ giving a commutative diagram
\begin{equation}\label{eqn:inftydiag}
\xymatrix{w\cat{C}(S^1_{n+1})_\infty \ar[r]^{e_{n,\infty}} \ar[d] & w\cat{C}_\infty \times w\cat{C}^{\times^{n}} \ar[d]^{p} \\
w\cat{C}(S^1_{n}) \ar[r]_{e_n} & w\cat{C}^{\times^{n}},}
\end{equation}
where the horizontal arrows are equivalences of categories. A simplex $\sigma \colon \Delta^m \to Nw\cat{C}(S^1_{n})$ comes from a uniquely determined functor $\sigma \colon [m] \to w\cat{C}(S^1_{n})$, where $[m]$ is the poset category $0 \to 1 \to \cdots \to m$. We define $(d_{0,\infty})_n^{-1}(\sigma)$ to be the pullback in the diagram
\[\xymatrix{(d_{0,\infty})_n^{-1}(\sigma) \ar[d] \ar[r] & w\cat{C}(S^1_{n+1})_\infty \ar[d]^{(d_{0,\infty})_n}\\
[m] \ar[r]_\sigma & w\cat{C}(S^1_{n}),}\]
and $pr^{-1}(e_n\circ \sigma)$ similarly. We claim that the functor 
\[e_n^{-1}(\sigma) \colon (d_{0,\infty})_n^{-1}(\sigma) \to p^{-1}(e_n\circ \sigma)\]
induced by $e_n$ and $e_{n,\infty}$ is an equivalence of categories. It is easily seen to be surjective on objects, and we must show that it is also fully faithful. An object in $(d_{0,\infty})_n^{-1}(\sigma)$ is a pair $(i,(X,n))$ such that $\sigma(i) = d_0X$. A morphism from $(i,(X,n))$ to $(j,(Y,n+k))$ in $(d_{0,\infty})_n^{-1}(\sigma)$ consists of the map $i \leq j$ and a map $f \colon (\tilde{t}_{n+1}\oplus-)^kX \to Y$ such that $\sigma(i \leq j) = d_0(f)$. Such a map is uniquely determined by what it does on the subsets $\{x,\ast\}$ of $S^1_{n+1}$, and a tuple of maps determines a unique map of diagrams, so the functor $e_n^{-1}(\sigma)$ is fully faithful. Consider the diagram
\[\xymatrix{\Delta^m \ar[r]^-\sigma \ar[d]_{id} & Nw\cat{C}(S^1_n) \ar[d]^{Ne_n} & Nw\cat{C}(S^1_{n+1})_\infty  \ar[d]^{Ne_{n,\infty}} \ar[l]_-{N(d_{0,\infty})_n} \\
\Delta^m \ar[r]_-{N(e_n \circ \sigma)} & Nw\cat{C}^{\times^{n}} & Nw\cat{C}_\infty \times Nw\cat{C}^{\times^{n}} \ar[l]
}\]
of simplicial sets. Since the vertical maps are weak equivalences the induced map on homotopy pullbacks is a weak equivalence. We have just seen that the map on pullbacks is a weak equivalence, and the map from the pullback to the homotopy pullback in the lower part of the diagram is a weak equivalence. It follows that the same holds for the upper part. Now since this holds for all simplices $\sigma$ the map $N(d_{0,\infty})_n$ is a homology fibration. 

To see that the second condition of \ref{cor:leveltoglobal} holds we observe that the fiber over an object $X$ in $w\cat{C}(S^1_{n})$ is equivalent to $w\cat{C}_\infty$. Now we conclude by \ref{lemma:cathloc} in the same way as in the proof of \ref{lemma:condpf}.
\end{proof}
The proof of the following theorem (cp. \cite[5]{catcoh}, \cite[Q.9]{filtr}) is similar to that of \ref{theorem:grcomp}.
\begin{theorem}[$K$-theoretic group completion]
   The map $\eta_\cat{C}$ induces an isomorphism of $H_\ast(Nw\cat{C})$-modules
\[H_\ast(Nw\cat{C})[\pi_0(Nw\cat{C})^{-1}] \iso H_\ast(\Omega|Nw\cat{C}(S^1)|).\]
\end{theorem}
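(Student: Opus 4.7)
The plan is to mimic the proof of theorem \ref{theorem:grcomp} in the categorical setting, using the just-established fact that $Nd_{0,\infty} \colon Nw\cat{C}(PS^1)_\infty \to Nw\cat{C}(S^1)$ is a homology fibration of bisimplicial sets. First I would reduce to the case where $\pi_0(Nw\cat{C})$ is finitely generated, so that \ref{example:hocof} applied to the commutative monoid $\pi_0(Nw\cat{C})$ produces a cofinal generator $t$. The reduction is by a filtering colimit argument: any additive category $\cat{C}$ is the filtering colimit of its full additive subcategories whose weak equivalence classes form a finitely generated submonoid, and all the functors $Nw(-)$, $\Omega|-|$, $H_\ast$, $w(-)(S^1)$ and localization commute with filtering colimits.

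So assume now that $t$ exists. The proposition just established shows that $Nd_{0,\infty}$ is a homology fibration, so by \ref{lemma:basechange} and the definition of a homology fibration the homotopy fiber over the basepoint $0_\cat{C} \in Nw\cat{C}(S^1)$ has the same homology as the strict fiber. Using the equivalence \eqref{eqn:inftydiag} in bisimplicial level $n$, namely $w\cat{C}(S^1_{n+1})_\infty \simeq w\cat{C}_\infty \times w\cat{C}^{\times n}$, compatible with the projection to $w\cat{C}(S^1_n) \simeq w\cat{C}^{\times n}$, one identifies the strict fiber of $Nd_{0,\infty}$ over $0_\cat{C}$ with the constant bisimplicial set $Nw\cat{C}_\infty$, and on realizations this fiber is $|Nw\cat{C}_\infty|$.

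The main remaining task is to show that the total space $|Nw\cat{C}(PS^1)_\infty|$ is contractible. The simplicial path object $PS^1$ is simplicially contractible, hence $w\cat{C}(PS^1)$ is levelwise equivalent to $w\cat{C}(\ast)$, which is trivial since $\cat{C}$ is pointed; consequently $|Nw\cat{C}(PS^1)|$ is contractible. By Thomason's theorem the nerve of the Grothendieck construction $w\cat{C}(S^1_{n+1})_\infty$ is weakly equivalent to the homotopy colimit of the diagram $Nw\cat{C}(S^1_{n+1}) \xrightarrow{\tilde t_{n+1}\oplus-} Nw\cat{C}(S^1_{n+1}) \to \cdots$, and assembling these compatibly in $n$ one sees that $|Nw\cat{C}(PS^1)_\infty|$ is a homotopy colimit of copies of $|Nw\cat{C}(PS^1)|$, hence contractible. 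It now follows from the fact that $Nd_{0,\infty}$ is a homology fibration with contractible total space that the canonical map induces a homology equivalence $|Nw\cat{C}_\infty| \to \Omega|Nw\cat{C}(S^1)|$, and combining with the isomorphism of \ref{lemma:cathloc} produces the desired isomorphism of $H_\ast(Nw\cat{C})$-modules.

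Finally, to see that the isomorphism agrees with the map induced by $\eta_\cat{C}$, one proceeds as in the proof of \ref{theorem:mainmon}: choose an explicit contracting null-homotopy of $|Nw\cat{C}(PS^1)_\infty|$ extending the standard simplicial contraction of $|Nw\cat{C}(PS^1)|$ along the $PS^1 \to \ast$ equivalence, trace out the path traced in $|Nw\cat{C}(S^1)|$ by an object $c$ sitting inside $|Nw\cat{C}_\infty|$ via the inclusion $Nw\cat{C} \to Nw\cat{C}_\infty$, and check that this path is exactly the $1$-simplex of $\eta_\cat{C}(c)$. This comparison step, rather than the homology fibration input, is the most technical obstacle because one has to track an explicit contracting homotopy through the Grothendieck construction; as in the monoid case any two contracting homotopies are homotopic so it suffices to exhibit one that does what we want.
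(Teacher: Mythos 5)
Your argument is correct and is essentially the approach the paper intends: the paper itself only remarks that the proof "is similar to that of \ref{theorem:grcomp}," and your proposal fills in exactly the expected steps — reduction to the finitely generated case by a filtering colimit argument, identification of the fiber of $Nd_{0,\infty}$ over $0_\cat{C}$ with $Nw\cat{C}_\infty$, contractibility of the total space via the simplicial contraction of $PS^1$ together with Thomason's theorem, conclusion via \ref{lemma:cathloc}, and the explicit-contracting-homotopy comparison to $\eta_\cat{C}$ as in \ref{theorem:mainmon}.
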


We now turn to additive categories with duality.
\begin{definition} An additive category with duality and weak equivalences is a
tuple $(\cat{C},T,\eta,w\cat{C})$ such that:
\begin{itemize}
\item $T$ is additive and $T$ and $\eta$ give a duality on $\cat{C}$
\item $T$ sends (opposites of) weak equivalences to weak equivalences
\item $(\cat{C},w\cat{C})$ is an additive category with weak equivalences
\end{itemize}
 \end{definition}
\begin{example}
  Let $(R,\alpha,\varepsilon)$ be a Wall-anti-structure. Then the category $P(R,\alpha,\varepsilon)$ becomes an additive category with duality and weak equivalences if we take the weak equivalences to be the isomorphisms.
\end{example}
To get a strict duality we can apply the functor $\cat{D}$ and because the functor $I \colon \cat{C} \to \cat{D}\cat{C}$ on underlying categories is an equivalence the category $\cat{D}\cat{C}$ will also be additive. Taking the weak equivalences in $\cat{D}\cat{C}$ to be pairs of maps in $w\cat{C}$ gives $\cat{D}(\cat{C},T,\eta)$ the structure of an additive category with duality and weak equivalences which is a functorial and better behaved replacement of  $(\cat{C},T,\eta,w\cat{C})$. There is a square of H-spaces and H-maps
\[\xymatrix{|Nw\cat{C}| \ar[d] \ar[r]^-{\eta_\cat{C}} &  \Omega|Nw\cat{C}(S^1)| \ar[d]\\
|Nw\cat{D}\cat{C}|  \ar[r]^-{\eta_{\cat{D}\cat{C}}} &  \Omega|Nw\cat{D}\cat{C}(S^1)|}\]
where the vertical maps are weak equivalences. Note that lemma \ref{lemma:strictify} also applies to additive categories with duality and weak equivalences, so that we may assume that our categories have astrict duality $T$, a duality preserving direct sum functor $(-\oplus-)$ which is strictly associative and strictly unital and that the unit $0$ is fixed under the duality(again, see \cite[5.11]{iblars}). 

\begin{remark}\label{remark:hyperbolic}Let $\xi \colon T(-)\oplus T(-) \to T(-\oplus-)$ be the canonical natural transformation. The category $Sym(w\cat{C})$ has a (functorial) sum operation $\perp$ called the orthogonal sum given by
\[(f \colon c \to T(c)) \perp (g \colon d \to Td) = c \oplus d \stackrel{f \oplus g}{\too} T(c)\oplus T(d) \stackrel{\xi_{c,d}}{\too} T(c \oplus d).\]
Under the induced operation the set $\pi_0(Sym(w\cat{C}))$ becomes a commutative monoid with unit represented by the $0$-form $0 \to 0$. For any object $c \stackrel{f}{\too} d$ of $Sd (w\cat{C})$ we can form the \emph{hyperbolic form} $H(f)$ on $c \stackrel{f}{\too} d$ which is the object 
\[\xymatrix{c\oplus T(d) \ar[rrr]^-{\begin{pmatrix}
        0 & T(f) \\ \eta_d \circ f & 0
    \end{pmatrix}} & & & T(c) \oplus TT(d) \ar[r]^-{\varphi_{c,T(d)}} & T(c \oplus T(d))}\]
of $Sym(w\cat{C})$. This is also compatible with maps in $Sdw\cat{C}$. Together the functors $\perp$ and $H$ give an ``action'' of $Sdw\cat{C}$ on $Sym(w\cat{C})$ analogous to the action of $M$ on $M^\g$ in section \ref{section:monoids}.
\end{remark}

Let $X$ be a pointed $\g$-set with $\sigma \colon X \to X$ representing the action of the non-trivial group element. The category $Q(X)$ inherits a strict duality $t$ by taking $t(U) = \sigma_\ast(U)$ and similarly for morphisms. If $\cat{C}$ is an additive category with weak equivalences and strict duality there is an induced duality $T_X$ on $w\cat{C}(X)$ given by taking a diagram 
\[A \colon Q(X)^{op} \to \cat{C}\]
to the composite diagram
\[Q(X)^{op} \stackrel{t}{\too} Q(X) \stackrel{A^{op}}{\too} \cat{C}^{op} \stackrel{T}{\too} \cat{C}.\]
Clearly the duality $T_X$ is strict and functorial in both $X$ and $(\cat{C},T,id)$. Let $n_+ = \{0,1, \ldots,n\}$ based at $0$ with the action of $\g$ taking an element $0 \neq k \geq 1 $ to $n-k$ and fixing $0$. If $X = 2_+ $ then the action interchanges the two non-trivial elements and the duality on $\cat{C}(2_+)$ sends the diagram
\[\xymatrix{X \ar@<-2pt>[r]_{i_1} & Y \ar@< 2pt>[r]^{p_2} \ar@<-2pt>[l]_{p_1} & X' \ar@< 2pt>[l]^{i_2}}\]
to the diagram
\[\xymatrix{TX' \ar@<-2pt>[r]_{Tp_2} & TY \ar@< 2pt>[r]^{Ti_1} \ar@<-2pt>[l]_{Ti_2} & TX \ar@< 2pt>[l]^{Tp_1}.}\]
We will always give $w\cat{C}^{\times^n}$ the strict duality given on objects by 
\[(X_1,\ldots,X_n) \mapsto (TX_n, \ldots , TX_1)\]
and similarly for maps. The evaluation map
\[e_n \colon w\cat{C}(n_+) \to w\cat{C}^{\times^n} \]
is equivariant for these dualities and is an equivalences of categories with duality (see \cite[4.11]{iblars}). 

We will now use the real simplicial set $S^{1,1} = \Delta R^1 / \partial \Delta R^1 $ to describe an action of $\g$ on the algebraic $K$-theory space of an additive category with strict duality and weak equivalences. For each $m \geq 0$ and each non-basepoint simplex $x \in S^{1,1}_{m}$ there is a unique $m$-simplex $\xi \in \Delta^1_m$ mapping to $x$ under the quotient map. The simplices $\Delta^1_m$ are linearly ordered by 
\[\xi \leq \zeta \iff |\xi^{-1}(\{1\})| \leq |\zeta^{-1}(\{1\})|,\]
and this gives a linear ordering of $S^{1,1}_{m}\setminus \{\ast\}$ which is reversed by the real simplicial structure map $w_m$. For each $n\geq 0$ the category $w\cat{C}(S^{1,1}_{n})$ inherits a duality $T_n$ from the action of $w_n$.  There are induced maps
\[w_{m,n} \colon N_nw\cat{C}(S^{1,1}_m) \to N_nw\cat{C}(S^{1,1}_m)\]
given by 
\[w_{m,n}(A_0 \stackrel{f_1}{\too} \ldots \stackrel{f_n}{\too} A_n) = (T_mA_n \stackrel{T_mf_n}{\too} \ldots \stackrel{ T_mf_1}{\too} T_mA_0),  \]
which satisfy the relations $ w_{m,n} \circ w_{m,n} = id$ and $w_{m,n} \circ (\alpha,\beta)^\ast = (\alpha^{op},\beta^{op})^\ast\circ w_{p,q}$ for maps $(\alpha,\beta) \colon ([m],[n]) \to ([p],[q])$ in $\Delta \times \Delta$. These assemble to a map of bisimplicial sets 
\[W \colon SdNw\cat{C}(SdS^{1,1}) \to SdNw\cat{C}(SdS^{1,1})\]
which in level $(m,n)$ is the map $Nw_{2m+1,2n+1}$. 

 The bisimplicial set $SdNw\cat{C}(SdS^{1,1})$ is naturally isomorphic to $NSdw\cat{C}(SdS^{1,1})$ and under this identification the map $W$ comes from a map of simplicial categories 
\[\tilde{W} \colon Sdw\cat{C}(SdS^{1,1}) \to Sdw\cat{C}(SdS^{1,1})\]
which squares to the identity and hence defines an action of $\g$ on $Sdw\cat{C}(SdS^{1,1})$. Let 
\[e_n^{1,1} \colon  w\cat{C}(S^{1,1}_n) \to w\cat{C}^{\times^{n}}\]
be the evaluation map which preserves the ordering of the underlying indexing set.  By \cite[4.11]{iblars} it is an equivalence of categories with duality and it induces a functor 
\[Sde^{1,1}_{2n+1}\colon Sdw\cat{C}(S^{1,1}_{2n+1}) \to Sdw\cat{C}^{\times^{2n+1}}  \]
which is $\g$-equivariant. The category $Sdw\cat{C}^{\times^{2n+1}}$ has the action given by 
\[(f_1, \ldots ,  f_{2n+1}) \mapsto (Tf_{2n+1}, \ldots ,Tf_{1}),\]
so a fixed object is of the form $(f_1, \ldots , f_n, f_{n+1},Tf_{n}, \ldots  ,Tf_{1})$ with $Tf_{n+1} = f_{n+1}$. We see that the last $n$ factors are redundant, so evaluation followed by projection on the first $n+1$ coordinates defines a functor
\[Sym (w\cat{C}(S^{1,1}_{2n+1})) \to Sdw\cat{C}^{\times^n} \times Sym (w\cat{C})\]
which is an equivalence of categories.

The map $d_0 \colon PS^1 \to S^1$ induces a map $Sdd_0 \colon SdPS^1 \to SdS^1$ and hence a map of simplicial categories
\[Sdw\cat{C}(SdPS^1) \to Sdw\cat{C}(SdS^1).\]
Define $Pb(\cat{C},T,w\cat{C})$ to be the pullback in the diagram
\[\xymatrix{Pb(\cat{C},T,w\cat{C}) \ar[r] \ar[d] & Sdw\cat{C}(SdPS^1) \ar[d] \\
Sym(w\cat{C}(SdS^{1,1})) \ar[r] & Sdw\cat{C}(SdS^1)}\]
of simplicial categories (without $\g$-actions) where the bottom map is the inclusion functor and the right hand vertical map is induced by $Sdd_0$. Note that the evaluation map gives an equivalence of categories (without duality)
\[Pb(\cat{C},T,w\cat{C})_n \simeq Sdw\cat{C}\times Sdw\cat{C}^{\times^n}\times Sym(w\cat{C}).\]
 Thinking of $Sym(w\cat{C})$ as a constant simplicial category we define a map of simplicial categories 
\[i \colon Sym(w\cat{C}) \to Pb(\cat{C},T,w\cat{C})\]
which in level $n$ sends an object $f \colon a \to Ta$ to the sum-diagram with value $f \colon  a \to Ta$ on subsets containing the unique non-trivial fixed point of $S^{1,1}_{2n+1}$ and $id \colon 0 \to 0 $ on subsets not containing it. The morphisms in $i_n(f)$ are identities or $0$ as for $\tilde{t}_n$. 
\begin{lemma}\label{lemma:catieq}
  The map $i$ induces a homology equivalence on nerves.
\end{lemma}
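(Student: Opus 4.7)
The plan is to show that $Ni$ is a simplicial homotopy equivalence, which implies it is a weak equivalence and hence a homology equivalence. This follows the template of Lemma \ref{lemma:ji}, where the analogous map $i \colon M^{\g} \to B(M,M,M^{\g})$ was shown to be part of a deformation retract with retraction $r$ satisfying $r \circ i = \mathrm{id}$ and a standard simplicial homotopy $i \circ r \simeq \mathrm{id}$.

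First I would use the evaluation equivalences established in the discussion preceding the lemma to identify $Pb(\cat{C},T,w\cat{C})_n$, up to equivalence of categories, with $Sdw\cat{C} \times Sdw\cat{C}^{\times n} \times Sym(w\cat{C})$, where the first factor corresponds to the ``path'' coordinate (the extra non-basepoint in $PS^1_{2n+1}$ which is not present in $S^1_{2n+1}$), the middle factor comes from the first $n$ coordinates of $Sym(w\cat{C}(SdS^{1,1}))_n$, and the last factor is the middle (fixed) coordinate. Under this identification, $i_n$ sends $s$ to $(\mathrm{id}_0, \mathrm{id}_0, \ldots, \mathrm{id}_0, s)$.

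Second, I would build a retraction $r \colon Pb(\cat{C},T,w\cat{C}) \to Sym(w\cat{C})$ using the orthogonal sum $\perp$ and hyperbolic form functor $H$ of Remark \ref{remark:hyperbolic}, defined levelwise by
\[r_n(p, m_1, \ldots, m_n, s) \;=\; H(p) \perp H(m_1) \perp \cdots \perp H(m_n) \perp s.\]
Since $H(\mathrm{id}_0) = 0$ (the zero symmetric form) and $0$ is a strict unit for $\perp$, one checks directly that $r \circ i = \mathrm{id}_{Sym(w\cat{C})}$. The construction parallels the formula $r(m_0,\ldots,m_p,m) = m_0 \cdots m_p \cdot m \cdot \overline{m_p} \cdots \overline{m_0}$ from the proof of Lemma \ref{lemma:ji}, with monoid multiplication replaced by $\perp$ and the anti-involution packaged inside $H$.

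Third, I would produce a natural transformation $i \circ r \Rightarrow \mathrm{id}_{Pb}$ using the canonical coproduct inclusions, which gives the desired simplicial homotopy on nerves. The main obstacle is verifying simplicial compatibility: both $r$ and the homotopy involve the face maps induced by $d_0 \colon PS^1 \to S^1$ and by the $(\Delta R)^{op}$-structure of $S^{1,1}$, and the pullback structure of $Pb$ couples these in a non-trivial way. The cleanest way to handle this is to recognize $Pb(\cat{C},T,w\cat{C})$ as (equivalent to) a two-sided bar construction $B(Sdw\cat{C}, Sdw\cat{C}, Sym(w\cat{C}))$ in which the action of $Sdw\cat{C}$ on $Sym(w\cat{C})$ is the one described in Remark \ref{remark:hyperbolic}; the desired deformation then arises from the standard extra-degeneracy on a two-sided bar construction, and simplicial compatibility is automatic. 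This exhibits $Ni$ as a simplicial homotopy equivalence, completing the proof.
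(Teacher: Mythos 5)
Your overall intuition — that $Pb(\cat{C},T,w\cat{C})$ is ``morally'' a two-sided bar construction $B(Sdw\cat{C},Sdw\cat{C},Sym(w\cat{C}))$ with the hyperbolic/orthogonal-sum action of Remark \ref{remark:hyperbolic}, and that $i$ should be an equivalence by a bar-construction contraction — is exactly right, and it's the same picture the paper has in mind. But there is a genuine gap in making this into a proof at the simplicial-category level, and the paper deliberately works around it.

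The problem is that the levelwise equivalence $Pb(\cat{C},T,w\cat{C})_n \simeq Sdw\cat{C}\times Sdw\cat{C}^{\times n}\times Sym(w\cat{C})$ is only an \emph{equivalence of categories}, not an isomorphism, and — more seriously — the evaluation functors $e_n$ do not commute strictly with the simplicial structure maps. The pushforward $\nabla_\ast$ along a fold map carries a sum-diagram $A$ to a diagram whose value is the chosen biproduct object $A(U)$, whereas the corresponding face map in the bar construction would be the chosen $\oplus$ of the evaluated factors; these agree only up to a canonical isomorphism. For precisely this reason Segal's $\cat{C}(X)$ is used in the first place instead of $\cat{C}^{\times n}$. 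Similarly, $\perp$ and $H$ are built from $\oplus$ together with the coherence map $\xi$, so the ``action'' of $Sdw\cat{C}$ on $Sym(w\cat{C})$ is not strictly associative as an action of simplicial categories. Consequently the formula you write for $r_n$ cannot be defined directly on $Pb(\cat{C},T,w\cat{C})_n$ without making biproduct choices, and once such choices are made $r$ and the proposed homotopy $i\circ r \Rightarrow \mathrm{id}$ will not commute with the face maps of $Pb$, which are honest pushforwards in the sum-diagram model. So the extra-degeneracy argument does not go through verbatim.

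The paper's proof gets around this by passing to homology with field coefficients \emph{before} invoking the bar-construction structure: applying $h_\ast(-)$ levelwise turns all the ``equivalence up to natural isomorphism'' data into honest equalities, so the simplicial graded $k$-vector space $[n]\mapsto h_\ast(NPb(\cat{C},T,w\cat{C})_n)$ \emph{is} literally the two-sided bar construction $B(R,R,P)$ with $R=h_\ast(NSdw\cat{C})$ and $P=h_\ast(NSym(w\cat{C}))$, and there the contraction you describe is a quasi-isomorphism of simplicial graded vector spaces for free. The spectral sequence for bisimplicial sets then gives the homology equivalence over $k$, and ranging over all fields upgrades this to integral homology. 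The retraction $r$ you propose does appear in the paper's proof, but only at the homology level, where it is used to check $R$-linearity. If you want to salvage a simplicial-homotopy-equivalence proof you would first need to replace $Pb$ by a strictly-actioned model (or verify that the chosen $\oplus$ and $\xi$ from \cite[5.11]{iblars} are genuinely strict and compatible with pushforward), which is exactly the kind of rigidification the paper's homology argument is designed to avoid.
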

\begin{proof}Under the equivalences $Pb(\cat{C},T,w\cat{C}) \simeq Sdw\cat{C}\times Sdw\cat{C}^{\times^n}\times Sym(w\cat{C})$ the functor $i_n$ corresponds to the inclusion of $Sym(w\cat{C})$ by 
\[(f \colon a \to Ta) \mapsto (id \colon 0 \to 0, id \colon 0 \to 0 , \ldots, id \colon 0 \to 0 ,f \colon a \to Ta).\]
Let $k$ be a field and let $h_\ast(-)$ be homology with coefficients in $k$. We take $R$ to be the graded ring $h_\ast(NSdw\cat{C})$ and $P$ to be the graded left $R$-module $h_\ast(NSym(w\cat{C}))$, where the action comes from the one sketched in \ref{remark:hyperbolic}. The simplicial graded $k$-vector space $[n] \mapsto h_\ast(Pb(\cat{C},T,w\cat{C})_n)$ is isomorphic to the bar construction $B(R,R,P)$ and the map in homology induced by $i$ is the inclusion $P \inj B(R,R,P)$ given on generators by 
\[p \mapsto 1 \otimes 1 \otimes \cdots \otimes 1 \otimes p.\]
This is a quasi-isomorphism of simplicial graded $k$-vector spaces. To see that the map preserves the $R$-module structure after taking homology we observe that there is a retraction back onto $P$ given like the map $r$ in \ref{lemma:ji}. Using the spectral sequence
\[E^{p,q}_2 = H_p(h_q(X)) \implies h_{p+q}(dX)\] for bisimplicial sets $X$ (see e.g., \cite[IV,2]{gj}) we get an isomorphism on homology with $k$ coefficients and, since this holds for any field $k$, an isomorphism on homology with integral coefficients. 
\end{proof}

Now assume that $\cat{C}$ has an object $t$ whose class in $\pi_0Nw\cat{C}$ is a cofinal generator. The subdivision of the functor $t\oplus- \colon w\cat{C} \to w\cat{C}$ is the functor that adds $t \stackrel{id}{\too} t$ to objects $a \to b$ of $Sdw\cat{C}$. Similarly, the subdivision $Sd\tilde{t}_n\oplus-$ of the functor $\tilde{t}\oplus-$, defined earlier, adds the map of sum-diagrams $\tilde{t}_n \stackrel{id}{\too} \tilde{t}_n$ to objects $A \to B$ in $Sdw\cat{C}(S^{1,1}_{n})$. For each $n\geq 0$ there is a diagram
\[ Sdw\cat{C}(S^1_{n})  \stackrel{Sd\tilde{t}_{n} \oplus -}{\too}  Sdw\cat{C}(S^1_{n})   \stackrel{Sd\tilde{t}_{n} \oplus -}{\too} \cdots \]
and we define $Sdw\cat{C}_\infty$ and $Sdw\cat{C}(S^1_{n})_\infty$ to be the Grothendieck constructions on the diagrams as before. Also the map
\[(Sdd_0)_\ast \colon  Sdw\cat{C}(S^1_{2n+2}) \to Sdw\cat{C}(S^1_{2n+1})\]
commutes with the maps $Sd\tilde{t}_n\oplus-$ and just as before there is an induced map
\[Sdw\cat{C}(SdPS^1)_\infty \to Sdw\cat{C}(SdS^1)\]
which induces a homology fibration on nerves. The maps $Sd\tilde{t}_n\oplus-$ also induce a map on the pullback $Pb(\cat{C},T,w\cat{C})$ which commutes with the projection to $Sym(w\cat{C}(SdS^{1,1}))$. There results a pullback square of simplicial categories 
\[\xymatrix{Pb(\cat{C},T,w\cat{C})_\infty \ar[r] \ar[d] & Sdw\cat{C}(SdPS^1)_\infty \ar[d] \\
Sym(w\cat{C}(SdS^{1,1})) \ar[r] & Sdw\cat{C}(SdS^1)}\]
where the vertical maps induce homology fibrations on nerves. The inclusion of $ Pb(\cat{C},T,w\cat{C})$ into $Pb(\cat{C},T,w\cat{C})_\infty$ at the start of the diagram defining the latter will be called $j$.

\begin{lemma}The map
\[j \circ i \colon Sym(w\cat{C}) \to Pb(\cat{C},T,w\cat{C})_\infty\]
induces an isomorphism
\[H_\ast(NSym(w\cat{C})[\pi_0Nw\cat{C}^{-1}] \iso H_\ast(NPb(\cat{C},T,w\cat{C})_\infty) \]
of left $H_\ast(NSdw\cat{C})$-modules.
\end{lemma}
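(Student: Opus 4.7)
The proof will follow the template of Lemma \ref{lemma:ji} very closely, replacing the monoid-level two-sided bar construction argument by the corresponding level-wise statement for the simplicial pullback $Pb(\cat{C},T,w\cat{C})$. The plan is to promote the level-wise homology equivalence $i$ of Lemma \ref{lemma:catieq} to a homology equivalence after passing to the stabilization $-_\infty$, and then invoke a localization statement for the hyperbolic action of $Sdw\cat{C}$ on $Sym(w\cat{C})$.

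First I would observe that under the equivalence of categories
\[Pb(\cat{C},T,w\cat{C})_n \simeq Sdw\cat{C} \times Sdw\cat{C}^{\times n} \times Sym(w\cat{C}),\]
the stabilization functor $Sd\tilde{t}_n \oplus -$ corresponds to the functor that adds $t\stackrel{id}{\too} t$ in each of the first $n+1$ factors and adds the hyperbolic form $H(t\stackrel{id}{\too} t)$ in the $Sym(w\cat{C})$ factor. The map $i_n$ identifies $Sym(w\cat{C})$ with the last factor, so $i$ is compatible with the stabilization in the sense that there is a commutative diagram of simplicial categories in which $i$ maps the diagram defining $Sym(w\cat{C})_\infty$ (under the hyperbolic action $H(t)\perp -$) to the diagram defining $Pb(\cat{C},T,w\cat{C})_\infty$. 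Applying Thomason's theorem \cite{thomason} level-wise, we identify the nerves of these Grothendieck constructions with the hocolims of the respective diagrams of nerves. Since each $i_n$ is a homology equivalence on nerves by Lemma \ref{lemma:catieq}, the induced map on hocolims, i.e.\ the map $j \circ i$ on nerves, is a homology equivalence. A retraction argument entirely analogous to that of Lemma \ref{lemma:ji} (using the sum-projection coming from the equivalence above) shows that the induced map is $H_\ast(NSdw\cat{C})$-linear: on the simplicial graded $k$-vector space $[n] \mapsto h_\ast(Pb(\cat{C},T,w\cat{C})_n)$ one has a retraction $r$ of $i$, and $r$ intertwines the $R$-actions when $R = h_\ast(NSdw\cat{C})$.

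It then remains to identify $H_\ast(NSym(w\cat{C})_\infty)$ with the localization $H_\ast(NSym(w\cat{C}))[\pi_0(Nw\cat{C})^{-1}]$. This is the symmetric-form analogue of Lemma \ref{lemma:mincl} / \ref{lemma:cathloc}. The proof runs in parallel: since $\pi_0(Nw\cat{C})$ is central in $H_\ast(NSdw\cat{C})$ (both act via the Pontryagin/orthogonal sum, and the former is abelian), the colimit of the sequence $H_\ast(NSym(w\cat{C})) \stackrel{[H(t)]\cdot}{\too} H_\ast(NSym(w\cat{C})) \stackrel{[H(t)]\cdot}{\too} \cdots$ is $H_\ast(NSym(w\cat{C}))[[H(t)]^{-1}]$, and inverting the class of $H(t) = [t]+[Tt]$ in $\pi_0(Nw\cat{C})$ is the same as inverting all of $\pi_0(Nw\cat{C})$ because $[t]$ is a cofinal generator (the class $[H(t)]$ is then also cofinal).

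The main obstacle is the bookkeeping required to verify that the map $j\circ i$ really does induce multiplication by $[H(t)]$ on homology of $Sym(w\cat{C})$ in a manner compatible with the $H_\ast(NSdw\cat{C})$-module structure, and correspondingly that the localization at $[H(t)]$ coincides with localization at $\pi_0(Nw\cat{C})$. The key algebraic input is Remark \ref{remark:hyperbolic}, which provides the functorial hyperbolic construction $H$ and the orthogonal sum $\perp$ giving the required module structure, together with the commutativity of $\pi_0(Nw\cat{C})$ which ensures $[t]$ is central.
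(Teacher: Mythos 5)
Your high-level plan (use Lemma \ref{lemma:catieq}, Thomason's theorem, and cofinality of $[t]$) is the right one, but a key step in the middle is incorrect, and it is precisely the step the paper's proof is organized to avoid.

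The claim that ``$Sd\tilde{t}_n \oplus -$ corresponds to the functor that adds $t\stackrel{id}{\too} t$ in each of the first $n+1$ factors and adds the hyperbolic form $H(t\stackrel{id}{\too}t)$ in the $Sym(w\cat{C})$ factor'' is not what the stabilization does. By construction $\tilde{t}$ is a sum--diagram supported only at the extra vertex $z$ of $PS^1$ (so $d_0(\tilde{t})=0$), and under the evaluation equivalence $Pb(\cat{C},T,w\cat{C})_n \simeq Sdw\cat{C}\times Sdw\cat{C}^{\times n}\times Sym(w\cat{C})$ it corresponds to $(t\oplus -)\times id \times \cdots \times id$, i.e., it acts \emph{only} on the first factor and leaves the $Sym(w\cat{C})$ factor untouched. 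Consequently the alleged commutative diagram of simplicial categories in which $i$ carries the $H(t)\perp-$ system on $Sym(w\cat{C})$ to the $Sd\tilde{t}\oplus-$ system on $Pb(\cat{C},T,w\cat{C})$ does not exist: for an object $f\colon a \to Ta$, the object $Sd\tilde{t}_n \oplus i_n(f)$ has value $t$ at the new vertex and $f$ at the fixed vertex, while $i_n(H(t)\perp f)$ has value $0$ at the new vertex and $H(t)\perp f$ at the fixed vertex. These are not isomorphic, not even naturally. So the hocolim-comparison step, as written, has no map to apply it to.

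The compatibility you need is real, but it lives at the level of homology rather than simplicial categories, and that is exactly how the paper's argument is structured: after applying Lemma \ref{lemma:catieq} one has an isomorphism $H_\ast(NSym(w\cat{C})) \cong H_\ast(NPb(\cat{C},T,w\cat{C}))$ of $H_\ast(NSdw\cat{C})$-modules, and on $H_\ast(NPb(\cat{C},T,w\cat{C}))$ the stabilization $Sd\tilde{t}$ induces left multiplication by $[t]$ (because under the bar-construction identification $B(R,R,P)\cong P$ the effect of $t\oplus-$ in the first factor is $[t]\cdot$). Then Thomason's theorem computes $H_\ast(NPb(\cat{C},T,w\cat{C})_\infty)$ as $\colim\left(H_\ast(NSym(w\cat{C}))\stackrel{[t]\cdot}{\too} \cdots\right) \cong H_\ast(NSym(w\cat{C}))[\pi_0 Nw\cat{C}^{-1}]$. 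You should also fix the last paragraph: one is not inverting a class $[H(t)]$ in $\pi_0 NSym(w\cat{C})$, but the action of $[t]\in\pi_0 Nw\cat{C}\subset H_\ast(NSdw\cat{C})$ on the module $H_\ast(NSym(w\cat{C}))$, which by Remark \ref{remark:hyperbolic} is given by $H(t)\perp-$; cofinality of $[t]$ then gives the localization at all of $\pi_0 Nw\cat{C}$.
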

\begin{proof} By lemma \ref{lemma:catieq} the map $i$ induces an isomorphism $H_\ast(NSym(w\cat{C})) \cong H_\ast(NPb(\cat{C},T,w\cat{C}))$ of left $H_\ast(NSdw\cat{C})$-modules. The map 
\[Sd\tilde{t} \colon Pb(\cat{C},T,w\cat{C}) \to Pb(\cat{C},T,w\cat{C})\]
induces left multiplication by $[t]$ on $H_\ast(NPb(\cat{C},T,w\cat{C}))$, and by Thomason's theorem we get a sequence of isomorphisms 
\begin{align*} H_\ast(NPb(\cat{C},T,w\cat{C})_\infty) &\cong \colim \left(H_\ast \left( NSym(w\cat{C})\right) \stackrel{[t]\cdot}{\too} H_\ast\left(NSym(w\cat{C})\right) \stackrel{[t]\cdot}{\too}   \ldots\right) \\
&\cong H_\ast(NSym(w\cat{C}))[t^{-1}]\\
&\cong H_\ast(NSym(w\cat{C}))[\pi_0Nw\cat{C}^{-1}]
\end{align*}
of left $H_\ast(NSdw\cat{C})$-modules as desired.

\end{proof}
 The proof of the following statement is similar to that of theorem \ref{theorem:mainmon}. We use that there is a natural ring isomorphism $H_\ast(NSdw\cat{C}) \cong H_\ast(Nw\cat{C})$.
\begin{theorem}\label{theorem:maincat}
  Let $(\cat{C},w\cat{C},T)$ be an additive category with strict duality and weak equivalences. Then the map $|NSym(w\cat{C})| \to (\Omega^{1,1}|NSymw\cat{C}(S^{1,1})|)^\g$ induces isomorphisms 
\[\pi_0(NSymw\cat{C})[\pi_0Nw\cat{C}^{-1}] \iso \pi_0((\Omega^{1,1}|NSymw\cat{C}(S^{1,1})|)^\g)\]
of monoids and 
\[H_\ast(NSymw\cat{C})[\pi_0Nw\cat{C}^{-1}] \to H_\ast((\Omega^{1,1}|NSymw\cat{C}(S^{1,1}))^\g)\]
of left $H_\ast(Nw\cat{C})$-modules.
\end{theorem}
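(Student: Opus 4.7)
The plan is to mirror the proof of Theorem \ref{theorem:mainmon}, with the simplicial category $Pb(\cat{C},T,w\cat{C})_\infty$ playing the role of the simplicial set $B(M_\infty,M,M^\g)$. First I would reduce to the case in which $\pi_0 Nw\cat{C}$ has a cofinal generator $t$, by writing $(\cat{C},w\cat{C})$ as a filtered colimit of its additive subcategories whose monoid of weak equivalence classes is finitely generated; each of the constructions appearing in the theorem commutes with such colimits.

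Assume that such a $t$ exists. By the proposition just proved and by \ref{lemma:sdbihfib}, the right-hand vertical map in the pullback square
\[\xymatrix{NPb(\cat{C},T,w\cat{C})_\infty \ar[r] \ar[d] & NSdw\cat{C}(SdPS^1)_\infty \ar[d] \\ NSym(w\cat{C}(SdS^{1,1})) \ar[r] & NSdw\cat{C}(SdS^1)}\]
is a homology fibration, so by \ref{lemma:basechange} the square becomes homology cartesian after taking diagonals. Moreover, $|NSdw\cat{C}(SdPS^1)_\infty|$ is contractible: the path object $PS^1$ is simplicially contractible via its extra degeneracy, hence $|NSdw\cat{C}(SdPS^1)|$ is contractible, and Thomason's theorem then identifies $|NSdw\cat{C}(SdPS^1)_\infty|$ with a homotopy colimit of contractible spaces.

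It follows that $|NPb(\cat{C},T,w\cat{C})_\infty|$ is homology equivalent to the homotopy fiber of the composite
\[|NSym(w\cat{C}(SdS^{1,1}))| \cong |Nw\cat{C}(S^{1,1})|^\g \hookrightarrow |Nw\cat{C}(S^{1,1})|,\]
where the first identification expresses that $Sym$ computes the $\g$-fixed points under the $SdT$-action, and the $\g$-action on the ambient realization is the one induced by the real simplicial structure on $S^{1,1}$. Applying the homeomorphism $b_X$ of Section \ref{section:monoids} with $X = |Nw\cat{C}(S^{1,1})|$ identifies this homotopy fiber with $(\Omega^{1,1}|NSymw\cat{C}(S^{1,1})|)^\g$. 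Combining the resulting homology equivalence with the preceding lemma on $j \circ i$ produces the claimed isomorphisms, provided that the composite comparison map agrees on homology with the one induced by $\eta^\g_\cat{C}$.

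The main obstacle, exactly as in Theorem \ref{theorem:mainmon}, is this last identification of maps. The strategy is to select a contracting homotopy of $|NSdw\cat{C}(SdPS^1)_\infty|$ that extends the canonical contracting homotopy of the subcomplex $|NSdw\cat{C}(SdPS^1)|$ coming from the extra degeneracy of $PS^1$, and then to trace a symmetric form $(f \colon a \to Ta)$ along this contracting homotopy through the square. The resulting path in $|Nw\cat{C}(S^{1,1})|$ starts at the basepoint and ends in the $\g$-fixed points, and an inspection at the level of sum-diagrams on $SdS^{1,1}$ shows that applying $b_X$ to this path recovers $\eta^\g_\cat{C}(f)$. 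The bookkeeping is heavier than in the monoid case because of the indexing over sum-diagrams, but the argument is otherwise identical.
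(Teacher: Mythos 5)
Your proposal is correct and follows the same route as the paper. The paper in fact gives this theorem only the two-sentence justification "The proof ... is similar to that of theorem \ref{theorem:mainmon}. We use that there is a natural ring isomorphism $H_\ast(NSdw\cat{C}) \cong H_\ast(Nw\cat{C})$," and your write-up is exactly the intended unpacking of that remark: reduce to the cofinal-generator case by filtered colimits, use the homology fibration on nerves together with \ref{lemma:sdbihfib} and \ref{lemma:basechange} to get the homology cartesian square, contract $|NSdw\cat{C}(SdPS^1)_\infty|$ via the path-object degeneracy and Thomason's theorem, identify the homotopy fiber with $(\Omega^{1,1}|Nw\cat{C}(S^{1,1})|)^{\g}$ through $b_X$, feed in the $j\circ i$ lemma, and finally trace a symmetric form through an explicit contracting homotopy to match the map with $\eta^{\g}_{\cat{C}}$. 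The only thing worth adding explicitly is the ring isomorphism $H_\ast(NSdw\cat{C}) \cong H_\ast(Nw\cat{C})$ needed to turn the $H_\ast(NSdw\cat{C})$-module statement of the $j\circ i$ lemma into the stated $H_\ast(Nw\cat{C})$-module statement.
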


For a Wall-anti-structure $(R,\alpha,\varepsilon)$ we set
\[K^{1,1}(R,\alpha,\varepsilon) = (\Omega^{1,1}|NSym (i\cat{D}P(R,\alpha,\varepsilon))(S^{1,1})|)^\g\]
and 
\[K^{1,1}_n(R,\alpha,\varepsilon) = \pi_n K^{1,1}(R,\alpha,\varepsilon).\]
We will now investigate the two fundamental cases when $R = \Z$ and $\alpha = id_\Z$. They are $\varepsilon = 1$ and $\varepsilon  = -1$. In the first case observe that $Sym(iP(\Z,id_\Z,1)$ is the category of non-degenerate symmetric bilinear form spaces over $\Z$.
\begin{proposition}\label{prop:symm(Z)}
  The monoid $K^{1,1}_0(\Z,id_\Z,1)$ is not a group.
\end{proposition}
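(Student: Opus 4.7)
The plan is to use theorem~\ref{theorem:maincat} to identify $K^{1,1}_0(\Z,id_\Z,1)$ with an explicit monoid localization, and then to exhibit a parity invariant whose target monoid is not a group.

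First, applying theorem~\ref{theorem:maincat} to $w\cat{C}=i\cat{D}P(\Z,id_\Z,1)$ and using that the duality-preserving functor $I\colon P(\Z,id_\Z,1)\too\cat{D}P(\Z,id_\Z,1)$ is an equivalence of categories with duality yields an isomorphism of commutative monoids
\[
K^{1,1}_0(\Z,id_\Z,1)\;\cong\;\pi_0(NSym(iP(\Z)))[\pi_0(NiP(\Z))^{-1}].
\]
The left-hand factor of the localization source is the monoid $V$ of isomorphism classes of symmetric bilinear forms on finitely generated free $\Z$-modules under orthogonal sum $\perp$. By remark~\ref{remark:hyperbolic} the monoid $\pi_0(NiP(\Z))\cong\N$ acts by orthogonal sum with hyperbolic forms, and its generator $[\Z]$ acts as $\varphi\mapsto H\perp\varphi$, where $H$ is the standard hyperbolic plane with Gram matrix $\bigl(\begin{smallmatrix}0&1\\1&0\end{smallmatrix}\bigr)$.

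Next, I define a parity homomorphism by declaring a form $\varphi\colon P\to\Hom(P,\Z)$ to be \emph{even} when $\varphi(p)(p)\in 2\Z$ for every $p\in P$, and \emph{odd} otherwise. Isometries preserve this property, and $\varphi\perp\psi$ is even if and only if both $\varphi$ and $\psi$ are, since the value of $\varphi\perp\psi$ on $(p,0)$ is $\varphi(p)(p)$. This yields a monoid homomorphism
\[
\tau\colon V\too\{\mathrm{even},\mathrm{odd}\},
\]
where the target is the two-element commutative monoid with unit $\mathrm{even}$ and absorbing element $\mathrm{odd}$.

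Finally, since $H((a,b),(a,b))=2ab$, the form $H$ and all of its $\perp$-powers map to $\mathrm{even}$, so $\tau$ descends to a monoid homomorphism out of the localization $V[\pi_0(NiP(\Z))^{-1}]\cong K^{1,1}_0(\Z,id_\Z,1)$. The rank-one form $\langle 1\rangle$ with Gram matrix $(1)$ maps to $\mathrm{odd}$, but $\mathrm{odd}$ admits no inverse in $\{\mathrm{even},\mathrm{odd}\}$. Hence the class of $\langle 1\rangle$ has no inverse in $K^{1,1}_0(\Z,id_\Z,1)$, and in particular this monoid is not a group. The only potentially delicate point is checking that $\tau$ descends to the localization, and this reduces immediately to the evenness of the single hyperbolic form being inverted.
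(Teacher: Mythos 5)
Your proof is correct and uses essentially the same idea as the paper: both arguments rest on the observation that the diagonal of a hyperbolic form (and hence of any $H^n$) takes only even values, while $\langle 1\rangle$ has an odd self-pairing, so $\perp$-ing with hyperbolics can never cancel $\langle 1\rangle$. You have merely repackaged the paper's direct contradiction as a parity monoid homomorphism $\tau$ into $\{\mathrm{even},\mathrm{odd}\}$ that descends to the localization because $H$ is even; this is a cleaner organization but not a different route.
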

\begin{proof}
  By theorem \ref{theorem:maincat} there is an isomorphism 
\[K^{1,1}_0(\Z,id_\Z,1) \cong \pi_0|NSym (i\cat{D}P(\Z,id_\Z,1))|[\pi_0(Ni\cat{D}(P(\Z)))^{-1}]\]
and the right hand side is isomorphic to the monoid 
\[M = \pi_0NSym(iP(\Z,id_\Z,1))[\pi_0(Ni(P(\Z)))^{-1}].\]
We will show that the latter is not a group by finding an element that cannot have an inverse. 

The $n$-th hyperbolic space $H^n$ is the symmetric bilinear form space with underlying abelian group $\Z^{2n}$ and the symmetric form given by the matrix 
\[\begin{pmatrix}0 & I_n\\ I_n & 0 \end{pmatrix},\]
where $I_n$ denotes the $n \times n$ identity matrix. The monoid $\pi_0(Ni(P(\Z)))$, which is isomorphic to $\N$, acts on $\pi_0NSym(iP(\Z,id_\Z,1))$ by adding hyperbolic spaces $H^n$ via the orthogonal sum. Let $\langle 1 \rangle$ denote the object $\Z \iso Hom(\Z,\Z)$ which sends an integer $k$ to the multiplication by $k$ map. Assume that $[\langle 1 \rangle]$ has an inverse in $M$. Elements of $M$ can be represented as differences $[a] - [H^m]$ where $a$ is in $Sym(iP(\Z,id_\Z,1)$. An inverse for $[\langle 1 \rangle]$ is a difference $[a] - [H^m]$ such that $[\langle 1 \rangle] + [a] - [H^m] = 0$ in $M$, or equivalently such that for some $n$ the equation
\[\langle 1 \rangle] + [a] + [H^n] = [H^m] + [H^n]\]
holds in $\pi_0NSym(iP(\Z,id_\Z,1))$. Since $H^m \perp H^n \cong H^{m+n}$, this means that we have an isomorphism
\[\langle 1 \rangle \perp a \perp H^n \cong H^{m+n}.\]
On the left hand side the element $(1,0,0)$ pairs with itself to $1 \in \Z$ under the bilinear form. However, on the right hand side an element $x$ in $H^{m+n}$ is of the form
\[
x = \begin{pmatrix}  x_1 \\ \vdots \\ x_{m+n} \\ x'_1 \\ \vdots \\ x'_{m+n} \end{pmatrix} \in \Z^{2(m+n)}
\]
and its pairing with itself is the matrix product
\[ x^T \begin{pmatrix}0 & I_n\\ I_n & 0 \end{pmatrix} x = \sum_{i=1}^{m+n}x_ix'_i + \sum_{i=1}^{m+n}x'_ix_i = 2(\sum_{i=1}^{m+n}x_ix'_i).\]
Since $1$ is an odd number we conclude that such an isomorphism cannot exist and that $K^{1,1}_0(\Z,id_\Z,1)$ is not a group.
\end{proof}

The second case is $Sym(iP(\Z,id_\Z,-1))$, the category of non-degenerate symplectic bilinear form spaces over $\Z$. We write ${}_{-1}H^n(\Z)$ for the symplectic form module with matrix 
\[\begin{pmatrix}0 & I_n\\ -I_n & 0 \end{pmatrix}.\]
By \cite[4,3.5]{milhus} any symplectic form module over $\Z$ is isomorphic to ${}_{-1}H^n(\Z)$ for a uniquely determined $n\geq 0$. We call this number the rank of the symplectic module. The corresponding rank map 
\[\pi_0|NSym(iP(\Z,id_\Z,-1)| \to \N\]
is an isomorphism of monoids.
\begin{proposition}\label{prop:symp(Z)}
  The rank map induces an isomorphism 
\[K^{1,1}_0(\Z,id_\Z,1) \cong \Z .\]
\end{proposition}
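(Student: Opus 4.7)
The plan is to apply theorem \ref{theorem:maincat} to the additive category with strict duality and weak equivalences $i\cat{D}P(\Z,id_\Z,-1)$ and then reduce to an elementary group-completion of $\N$ acting on itself by addition. Since the duality-preserving functor $(I,\iota) \colon P(\Z,id_\Z,-1) \to \cat{D}P(\Z,id_\Z,-1)$ is an equivalence of categories with duality, it induces equivalences $Sym(iP(\Z,id_\Z,-1)) \simeq Sym(i\cat{D}P(\Z,id_\Z,-1))$ and $iP(\Z) \simeq i\cat{D}P(\Z)$. Thus \ref{theorem:maincat} yields an isomorphism of monoids
\[K^{1,1}_0(\Z,id_\Z,-1) \;\cong\; \pi_0 NSym(iP(\Z,id_\Z,-1))\bigl[\pi_0 Ni(P(\Z))^{-1}\bigr].\]

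Next I identify the two pieces on the right hand side as copies of $\N$. The monoid $\pi_0 Ni(P(\Z))$ is $\N$ under direct sum, with generator the class of $\Z$, by the classification of finitely generated free abelian groups. The monoid $\pi_0 NSym(iP(\Z,id_\Z,-1))$ is $\N$ under orthogonal sum, generated by the class of ${}_{-1}H^1(\Z)$, by the Milnor--Husemoller classification cited just before the proposition (every non-degenerate symplectic form over $\Z$ is isomorphic to a unique ${}_{-1}H^n(\Z)$, and ${}_{-1}H^m \perp {}_{-1}H^n \cong {}_{-1}H^{m+n}$). Under these identifications the rank map $\pi_0 NSym(iP(\Z,id_\Z,-1)) \to \N$ is just the identity.

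It remains to identify the action. By remark \ref{remark:hyperbolic} the action of $\pi_0 Ni(P(\Z))$ on $\pi_0 NSym(iP(\Z,id_\Z,-1))$ is given by $(a,b) \mapsto H(a) \perp b$, where $H$ is the hyperbolic construction. A direct computation with $\alpha = id_\Z$ and $\varepsilon = -1$ shows that the hyperbolic form on $\Z^k$ has underlying module $\Z^k \oplus \Hom(\Z^k,\Z) \cong \Z^{2k}$ and matrix
\[\begin{pmatrix} 0 & I_k \\ -I_k & 0 \end{pmatrix},\]
so $H(\Z^k) \cong {}_{-1}H^k(\Z)$. Consequently the action of $\N$ on $\N$ becomes $(k,n) \mapsto k+n$. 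Localizing $\N$ at the additive action of $\N$ on itself gives $\Z$, and the resulting isomorphism $K^{1,1}_0(\Z,id_\Z,-1) \cong \Z$ extends the rank map on symmetric forms.

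The main (very mild) obstacle is step three: checking that the abstract definition of $H$ in remark \ref{remark:hyperbolic} agrees, on $\Z^k$, with the classical hyperbolic form ${}_{-1}H^k$. This is a routine matrix computation, provided one carefully tracks the right $R$-module structure $(fr)(p) = \alpha(r)f(p)$ on $\Hom_R(P,R)$ and the sign introduced by $\varepsilon = -1$ in the symmetry condition; no subtler ingredient is required.
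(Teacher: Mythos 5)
Your argument is correct and is exactly the one the paper intends: the paper omits an explicit proof, but the classification of symplectic forms over $\Z$ stated just before the proposition, together with theorem \ref{theorem:maincat}, lemma \ref{lemma:ji}'s analogue via remark \ref{remark:hyperbolic}, and the identification $H(\Z^k)\cong{}_{-1}H^k(\Z)$, give precisely your chain $K^{1,1}_0 \cong \N[\N^{-1}] \cong \Z$. Note also that the subscript $1$ in the proposition's displayed formula is a typo for $-1$, which you have silently and correctly repaired by working with $\varepsilon=-1$ throughout.
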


\appendix
\section{Appendix: The category $\Delta R$}\label{appendix}

The category $\Delta R$ has the same objects as the finite ordinal category $\Delta$ but more morphisms. In addition to the maps of $\Delta$ there is for each $n\geq 0$ morphism $\omega_n \colon [n] \to [n]$, which should be thought of as reversing the ordering on $[n]$. The maps satisfy the relations
\begin{align}\label{rel:realsimp}
  \omega_n \circ \omega_n &= id_{[n]} \\
\omega_n \circ \sigma^j &= \sigma^{n-j} \circ \omega_{n+1}\\
\omega_n \circ \delta^i &= \delta^{n-i} \circ \omega_{n-1}
\end{align}
for $0 \geq i,j \geq n$. Following \cite{iblars} a functor from $(\Delta R)^{op}$ to sets is called a real simplicial set and similarly for functors into other categories. The maps induced in a real simplicial object by $\omega_n$ is denoted by $w_n$. 

If we restrict a real simplicial set $X$ to $\Delta^{op}$ the realization $|(X|_{\Delta^{op}})|$carries an action of $\g$ which for $(x,t_0,\ldots,t_n) \in X_n \times \Delta^n$ acts by
\[[(x,t_0,\ldots,t_n)] \mapsto [(w_n(x),t_n, \ldots ,t_0)],\]
(see \cite[1]{iblars} for details). Recall the functor
\[(-)^{op} \colon \Delta \to \Delta\]
which is the identity on objects and which sends $\delta^i \colon [n] \to [n+1]$ to $(\delta^i)^{op} = \delta^{n+1 -i}$ and $\sigma^j \colon [n] \to [n-1]$ to $(\sigma^j)^{op} = \sigma^{n-1-j}$. Clearly $(-)^{op}\circ(-)^{op} = Id_\Delta$. Let $\cat{A}$ be any category. Given a simplicial object $X \colon \Delta^{op} \to \cat{A}$ its opposite is defined by 
\[X^{op} = X \circ (-)^{op}.\] 
This defines a functor on the functor category $\cat{A}^{\Delta^{op}}$ which squares to the identity.
\begin{lemma}\label{lemma:realstr}
  Giving an extension of a functor $X \colon \Delta^{op} \to \cat{A} $ to the category $\Delta R^{op}$ is equivalent to giving a map $\omega \colon X^{op} \to X$ such that $\omega \circ \omega^{op} = id_X$. 
\end{lemma}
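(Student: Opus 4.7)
The plan is to verify that the defining relations of $\Delta R$ (beyond those of $\Delta$) are precisely what is needed to translate a compatible family of involutive maps $w_n \colon X_n \to X_n$ into a natural transformation $\omega \colon X^{op} \to X$ squaring to the identity. So the proof is essentially a presentation-of-categories argument combined with careful bookkeeping of variance.

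First, suppose $\tilde{X} \colon (\Delta R)^{op} \to \cat{A}$ is an extension of $X$. Define $w_n = \tilde{X}(\omega_n) \colon X_n \to X_n$. The relation $\omega_n \circ \omega_n = id_{[n]}$ immediately gives $w_n \circ w_n = id_{X_n}$. To see that the collection $w = (w_n)$ forms a natural transformation $X^{op} \to X$, I would check naturality against face and degeneracy maps separately. Recall that $(\delta^i)^{op} = \delta^{n+1-i}$ and $(\sigma^j)^{op} = \sigma^{n-1-j}$, so the face map of $X^{op}$ corresponding to $\delta^i \colon [n] \to [n+1]$ is $d_{n+1-i} \colon X_{n+1} \to X_n$, and similarly for degeneracies. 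Naturality of $w$ at $\delta^i$ then reads $w_n \circ d_{n+1-i} = d_i \circ w_{n+1}$, which is exactly what one obtains by applying the contravariant $\tilde{X}$ to the relation $\omega_{n+1} \circ \delta^i = \delta^{n+1-i} \circ \omega_n$ in $\Delta R$ (after relabeling $n+1$ as $n$ in the relation as stated). An analogous calculation for $\sigma^j$ shows that the mixed-generator relations in $\Delta R$ are in bijection with the naturality squares of $w \colon X^{op} \to X$. Finally, $\omega \circ \omega^{op}$ has $n$-th component $w_n \circ w_n$, so the condition $\omega \circ \omega^{op} = id_X$ is equivalent to $w_n \circ w_n = id$ for all $n$.

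Conversely, given a natural transformation $\omega \colon X^{op} \to X$ with $\omega \circ \omega^{op} = id_X$, I would extend $X$ to $(\Delta R)^{op}$ by setting $\tilde{X}(\omega_n) = \omega_n$ (viewed as a morphism $X_n \to X_n$ in $\cat{A}$). For this to define a functor one has to check that all relations among morphisms in $\Delta R^{op}$ are respected; since $\Delta R$ is generated by $\Delta$ together with the $\omega_n$ subject exactly to the three relations listed (this is either taken as the definition or an elementary check), and since $\tilde{X}$ already respects the $\Delta$-relations by virtue of being an extension of $X$, the only new conditions to check are the three families of relations of lemma \ref{rel:realsimp}, which by the previous paragraph translate into $w_n \circ w_n = id$ and naturality of $\omega$.

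These two constructions are inverse to each other by definition, giving the stated equivalence. The only mildly subtle point — the ``main obstacle'' if there is one — is keeping the variance straight: one must verify that with the convention $X^{op} = X \circ (-)^{op}$ the reversal $\delta^i \mapsto \delta^{n+1-i}$, $\sigma^j \mapsto \sigma^{n-1-j}$ built into $(-)^{op}$ matches the shifts $i \mapsto n-i$, $j \mapsto n-j$ appearing in the $\Delta R$-relations. Once the indices are aligned this is purely formal; no deeper input is required.
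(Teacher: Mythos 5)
Your verification is correct: the index bookkeeping matches (naturality of $\omega$ at $\delta^i$ is exactly the image under the contravariant functor of the relation $\omega_{n+1}\circ\delta^i = \delta^{n+1-i}\circ\omega_n$, and similarly for the codegeneracies), and the involution condition corresponds to $\omega_n\circ\omega_n = id$. The paper offers no proof of this lemma, treating it as immediate from the presentation of $\Delta R$; your argument is precisely the spelled-out version of that check.
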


Now recall the functor $Sd \colon \Delta \to \Delta$ given by $Sd[n] = [2n+1]$ and $Sd(\theta) = \theta \sqcup \theta^{op}$. The Segal edgewise subdivision of $X$ is defined by $SdX = X \circ Sd$. It gives an endofunctor on $\cat{A}^{\Delta^{op}}$ and it is not hard to see that $Sd \circ (-)^{op} = Sd$, so that $SdX^{op} = SdX$ for any simplicial object $X$. Given a real simplicial object $Y \colon (\Delta R)^{op} \to \cat{A}$ we can regard it as a simplicial object $Y|_{\Delta^{op}}$ with a map $\omega \colon Y^{op} \to Y$ as in lemma \ref{lemma:realstr}. Then on the subdivision we get a map
\[Sd(\omega) \colon SdX^{op} = SdX \to SdX\]
such that $Sd(\omega)^2 = id_{SdX}$. In other words, $Sd(\omega)$ defines an action of $\g$ on $SdX$. For a real simplicial set $X$ the natural homeomorphism $|Sd(X|_{\Delta^{op}})| \iso |X|_{\Delta^{op}}|$ of \cite[A.1]{confiter} is $\g$-equivariant.
\begin{example}The representable functor $\Delta R^1 = hom_{\Delta R^1}(-,[1])$ realizes to the topological 1-simplex with the action of $\g$ given by reflection through the middle point. Its boundary $\partial\Delta R^1$ realizes to the two end points, which are interchanged by the $\g$-action. Define $S^{1,1}$ to be $\Delta R^1/\partial\Delta R^1$. The realization $|S^{1,1}|$ is $\g$-homeomorphic to the one point compactification of the $-1$-representation of $\g$ on $\R$. 
\end{example}

\bibliographystyle{alpha}
\bibliography{eqloops}

\end{document}